\numberwithin{equation}{section}
\DeclareMathOperator{\inc}{in}
\newcommand{\ext}{E}
\newcommand{\loday}{\Lambda}
\newcommand{\Loday}{L_*}
\newcommand{\SLoday}{\mathcal{L}}
\newcommand{\morava}[1]{A\langle{#1}\rangle}
\newcommand{\B}{B}
\newcommand{\polIdeal}{\mathfrak{p}}
\DeclareMathOperator{\sk}{sk}
\newcommand{\HF}{{H\bb{F}_p}}
\newcommand{\HZ}{{H\bb{Z}}}
\newcommand{\simp}{\Delta}
\newcommand{\sphereproj}{g}
\newcommand{\attach}{f}
\newcommand{\torusIso}{\alpha}
\newcommand{\za}{a}
\newcommand{\zb}{b}
\newcommand{\zi}{i}
\newcommand{\zk}{k}
\newcommand{\zu}{u}
\newcommand{\zv}{v}
\newcommand{\ord}[1]{{[#1]}}
\newcommand{\uord}[1]{{\bf #1}}
\newcommand{\basis}{\mathcal{G}}
\newcommand{\obasis}{\overline{\basis}}
\newcommand{\sBar}{\varrho}
\newcommand{\sBok}{\sigma}
\newcommand{\sOper}{\sigma}
\newcommand{\sOperSm}{\widehat{\sigma}}
\newcommand{\sCircleInc}{\omega}
\newcommand{\sSmashInc}{\widehat{\omega}}
\newcommand{\oxi}{\overline{\xi}}
\newcommand{\otau}{\overline{\tau}}
\newcommand{\n}{{\bf n}}
\newcommand{\mult}{\phi}
\newcommand{\unit}{\eta}
\newcommand{\comult}{\psi}
\newcommand{\counit}{\epsilon}
\newcommand{\redcomult}{\widetilde{\comult}}
\newcommand{\twist}{\tau}
\newcommand{\pinch}{\comult}
\newcommand{\fold}{\nabla}
\newcommand{\type}[1]{type $#1$}
\newcommand{\divByP}[1]{\frac{#1}{p}}
\newcommand{\nat}{\bb{N}} 
\newcommand{\natpos}{\bb{N}_+} 
\DeclareMathOperator{\catCommRings}{CRings}
\newcommand{\catSpanSet}[1]{V(#1)}
\newcommand{\bb}{\mathbb}
\newcommand{\catTop}{Top}
\newcommand{\catFinSet}{\mathcal{I}}
\newcommand{\colim}{\operatornamewithlimits{colim}}
\DeclareMathOperator{\id}{id}
\DeclareMathOperator{\pr}{pr}
\DeclareMathOperator{\tor}{Tor}
\DeclareMathOperator{\point}{\{pt\}}
\DeclareMathOperator{\im}{im}
\newcounter{enumi_saved}
\newtheorem{thm}[equation]{Theorem}
\newtheorem{lemma}[equation]{Lemma}
\newtheorem{prop}[equation]{Proposition}
\newtheorem{corr}[equation]{Corollary}
\newtheorem{defn}[equation]{Definition}
\theoremstyle{definition} 
\theoremstyle{definition} \newtheorem{ex}[equation]{Example}
\theoremstyle{definition} \newtheorem{remark}[equation]{Remark}
\begin{document}

\title{Detecting Periodic Elements in Higher Topological Hochschild Homology}
\author{Torleif Veen, \url{torleif.veen@gmail.com}}

\maketitle

\begin{abstract}
Given a commutative ring spectrum $R$ let $\Lambda_XR$ be the Loday functor
constructed by Brun, Carlson and Dundas. Given a prime $p\geq 5$ 
we calculate
$\pi_*(\Lambda_{S^n}H\mathbb{F}_p)$ and $\pi_*(\Lambda_{T^n}H\mathbb{F}_p)$ for $n\leq p$, and use these results to deduce 
that $v_{n-1}$
in the $n-1$-th connective Morava $K$-theory of
$(\Lambda_{T^{n}}H\mathbb{F}_p)^{hT^{n}}$ is non-zero and detected in the homotopy fixed point spectral sequence by an explicit element, which class we name the Rognes class.

To facilitate these calculations we introduce Multifold Hopf algebras. 
Each axis circle in $T^n$ gives rise to a Hopf algebra structure on $\pi_*(\Lambda_{T^n}H\mathbb{F}_p)$, 
and the way these Hopf Algebra structures interact is encoded with a Multifold Hopf algebra structure. This structure puts several restrictions on 
the possible algrebra structures on $\pi_*(\Lambda_{T^n}H\mathbb{F}_p)$ and is a vital tool in the calculations above. 

\end{abstract}

\section{Introduction}

Topological Hochschild
homology of an orthogonal commutative ring spectrum $R$ can be defined as the tensor
$S^1\otimes R$ in the category of orthogonal commutative ring spectra, see \cite{McClureetAl97} and \cite{MandellMay02}. 
Several people have put a lot of effort into computing the homotopy groups of
topological Hochschild homology of various ring spectra. In
\cite{Bokstedt86} he calculates the homotopy groups of $THH$ of the Eilenberg Mac Lane spectra  $\HF$ and $H\bb{Z}$, in  
\cite{McClureStaffeldt93} they calculate the mod $p$ homotopy groups of $THH$ of the Adams
summand $\ell$, in \cite{Ausoni05} he calculates the mod $v_1$ homotopy groups of 
$THH$ of connective complex $K$-theory  and in \cite{AngeltveitHillTyler} the integral
homotopy
groups of $THH(\ell)$ and the $2$-local homotopy groups of $THH(ko)$ are caulcated. 

Let $X$ be a space and write $\loday_{X}R$ for the Loday functor, defined in  Definition \ref{def:LodayFunctor}, first defined for $\Gamma$-spaces in \cite{BrunCarlssonDundas10} and then 
defined for Orthogonal spectra in \cite{Stolz11} and \cite{BrunDundasStolz16}. 
If $G$ is a compact group, and $X$ is a $G$-space, then $\loday_{X}R$ is a $G$-spectrum which is $G$-equivariant equivalent to $R\otimes X$ the categorical tensor, 
when using the $\bb{S}$-model structure from \cite{BrunDundasStolz16}. We will be interested in the case when both $X$ and $G$ are toruses.
 
We write $\Loday(X)$ for the graded ring $\pi_*(\loday_{X}\HF)$.  
Iterated topological Hochschild homology of $\HF$ is then isomorphic
 to $\Loday(T^n)$, where $T^n$ is the $n$-fold pointed torus.

We calculate $\Loday(T^n)$ using the Bar spectral sequences associated with the cofibration given by attaching the top cell in $T^n$.
A first step is thus to calculate $\Loday(S^n)$. 

Let $p$ be an odd prime, $B_0=\bb{F}_p$, $B_1 = P(\mu)$ the polynomial algebra 
over $\bb{F}_p$ on a generator of degree $2$, and when $n\geq 2$ we recursively define 
$B_n = \tor^{B_{n-1}}(\bb{F}_p,\bb{F}_p)$, where the grading in $B_n$ is given by the total grading in $\tor$. 

In Theorem \ref{thm:smash_over_circles} we prove:
\begin{thm}
 When $n\leq 2p$, there is an $\bb{F}_p$-Hopf algebra isomorphism 
  $$\Loday(S^n)\cong B_n.$$
\end{thm}

Let $\uord{n}$ denote the set $\{1,\ldots ,n\}$ of natural numbers. 

In Theorem \ref{thm:smash_over_torus} we prove:
\begin{thm}
 Given $1\leq n\leq p$ when $p\geq 5$ and $1\leq n\leq 2$ when $p=3$,
 there is a graded $\bb{F}_p$-algebra isomorphism
 $$\Loday(T^n)\cong \bigotimes_{U\subseteq \uord{n}}B_{\vert U \vert}.$$
\end{thm}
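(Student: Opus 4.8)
The plan is to induct on $n$, peeling off one circle at a time. By the exponential law for the copower in commutative ring spectra, $T^n\cong S^1\times T^{n-1}$ gives $\loday_{T^n}\HF\simeq\loday_{S^1}(\loday_{T^{n-1}}\HF)=\mathrm{THH}(\loday_{T^{n-1}}\HF)$, so at each stage I compute topological Hochschild homology of the previous $\HF$-algebra $A:=\loday_{T^{n-1}}\HF$. Write $A_m:=\bigotimes_{U\subseteq\uord{m}}B_{|U|}$ for the conjectural answer. Splitting the subsets $V\subseteq\uord{n}$ according to whether $n\in V$ yields $A_n\cong A_{n-1}\otimes P(\mu_n)\otimes\bigotimes_{\emptyset\neq U\subseteq\uord{n-1}}B_{|U|+1}$, where $P(\mu_n)=B_1$ is the factor for $V=\{n\}$ and the last factor collects the $V=U\cup\{n\}$ with $|V|\geq 2$. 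The base case $n=1$ is Bökstedt's theorem $\pi_*\mathrm{THH}(\HF)\cong P(\mu)=B_1$, in the homology-level form $\HF_*\mathrm{THH}(\HF)\cong\mathcal{A}_*\otimes P(\mu)$, where $\mathcal{A}_*$ is the dual Steenrod algebra.

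I would carry out the computation on mod-$p$ homology and only afterwards pass to homotopy. Over the field $\bb{F}_p$ the Künneth isomorphism holds, so the Bökstedt spectral sequence applies with $E^2$-page $\mathrm{HH}_*^{\bb{F}_p}(\HF_*A)\Rightarrow\HF_*\mathrm{THH}(A)$. The inductive hypothesis, strengthened to the homology level, reads $\HF_*\loday_{T^{n-1}}\HF\cong\mathcal{A}_*\otimes A_{n-1}$ as algebras, and since Hochschild homology takes tensor products to tensor products, the $E^2$-page splits as $\mathrm{HH}_*(\mathcal{A}_*)\otimes\mathrm{HH}_*(A_{n-1})$. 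The first factor is exactly Bökstedt's input and, after his differentials and the multiplicative extension replacing a divided-power algebra on $\sigma\tau_0$ by a polynomial algebra, converges to $\mathcal{A}_*\otimes P(\mu_n)$: the dual Steenrod algebra together with the fresh Bökstedt class attached to the subset $\{n\}$.

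The second factor is governed by the purely algebraic identity $\mathrm{HH}_*^{\bb{F}_p}(B_k)\cong B_k\otimes\tor^{B_k}(\bb{F}_p,\bb{F}_p)=B_k\otimes B_{k+1}$ for $k\geq 1$, together with $\mathrm{HH}_*^{\bb{F}_p}(B_0)\cong B_0$; this is where the recursive definition of the $B_k$ enters. It holds because each $B_k$ is a tensor product of polynomial, exterior and divided-power pieces, and over $\bb{F}_p$ a divided-power algebra is a tensor product of $p$-truncated polynomial algebras $\bb{F}_p[\gamma]/(\gamma^p)$, for which the derivative of the defining relation vanishes in characteristic $p$, so the Hochschild--Kostant--Rosenberg splitting $\mathrm{HH}_*(C)\cong C\otimes\tor^C(\bb{F}_p,\bb{F}_p)$ applies to every factor. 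Thus $\mathrm{HH}_*(A_{n-1})\cong A_{n-1}\otimes\bigotimes_{\emptyset\neq U}B_{|U|+1}$, which is precisely the promotion $U\mapsto U\cup\{n\}$ on nonempty subsets. Assembling the two factors rebuilds $\HF_*\loday_{T^n}\HF\cong\mathcal{A}_*\otimes A_n$, and since $\loday_{T^n}\HF$ is an $\HF$-module one has $\HF_*\loday_{T^n}\HF\cong\mathcal{A}_*\otimes\pi_*\loday_{T^n}\HF$ as $\mathcal{A}_*$-comodules; reading off the comodule primitives then recovers $\pi_*\loday_{T^n}\HF\cong A_n$, the coaction being trivial on the new generators.

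The main obstacle is proving that these spectral sequences degenerate correctly: that the $E^2$-page really splits as the tensor product of the two factors above, that the only extensions are the expected divided-power-to-polynomial ones, and that Bökstedt's differentials in the $\mathcal{A}_*$-factor do not, by multiplicativity, interact with the $A_{n-1}$-classes. This is exactly what confines the range to $1\leq n\leq p$ (and to $n\leq 2$ when $p=3$). The differentials and extensions are controlled by the Dyer--Lashof action, and for $n\leq p$ a weight and total-degree count leaves no room for a nonzero $d_r$ or for a cross-term; the first class whose $p$-th divided power could interact with a Bökstedt class, producing an obstruction, appears at $n=p+1$, while at the prime $3$ the low internal degrees force a collision already at $n=3$. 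I expect this sparseness argument, tracking the Dyer--Lashof operations across the iterated spectral sequences, to be the only genuinely delicate part of the proof.
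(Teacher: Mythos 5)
Your overall skeleton matches the paper's more closely than you might expect: the paper also runs the B\"okstedt spectral sequence $E^2=HH_*(H_*(\loday_{T^{\uord{n-1}}}\HF))\Rightarrow H_*(\loday_{T^{\uord{n}}}\HF)$ with exactly the splitting you describe, and identifies $\tor^{B_{n-1}}(\bb{F}_p,\bb{F}_p)\cong B_n$ (Proposition \ref{prop:B_n}) by the same per-tensor-factor computation. The difference in architecture is that the paper does \emph{not} extract $\pi_*$ from $H_*$ by comodule primitives; it computes homotopy directly via the bar spectral sequence coming from the top-cell attachment $S^{n-1}\to T^{\uord{n}}_{n-1}\to T^{\uord{n}}$, i.e.\ $E^2=\tor^{L(S^{n-1})}(L(T^{\uord{n}}_{n-1}),\bb{F}_p)$, and uses the homological B\"okstedt computation only as a dimension-counting device (via $H_*(X)\cong A_*\otimes\pi_*(X)$ for $\HF$-modules) to kill differentials in the homotopy-level spectral sequence. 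That choice is not cosmetic: it is what lets the paper avoid having to control the $A_*$-coaction on the new generators, which in your route is an unaddressed problem of the same difficulty as the extensions themselves.

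The genuine gap is your final paragraph. First, the claim that a weight/degree sparseness count shows the B\"okstedt spectral sequence has only the standard differentials is false as stated: the paper shows that additional differentials $d^{p-1}(\gamma_{p^l}(\sigma_n x))=\gamma_{p^l-p}(\sigma_n x)\sum_i r_{x,i}\,d^{p-1}(\gamma_p(\sigma_n\overline{\tau}_i))$ cannot be excluded, and instead neutralizes them by an explicit change of basis (Lemma \ref{lemma:changeBasisInSS}) so that $E^\infty$ still has the correct size. Second, and more seriously, the multiplicative extension problem cannot be settled by degree counting plus Dyer--Lashof tracking: one must show $y^p=0$ for each new even-degree generator, and the candidate targets are the classes $\mu_1^{p^{j_1}}\cdots\mu_n^{p^{j_n}}$, with a genuine degree collision already at $p=3$, $n=3$ (the paper notes $|\gamma_{p^{k+1}}(\sBar^0\sBar\mu)|=|\mu_1^{p^k+p^{k+1}}\mu_2^{p^k}\mu_3^{p^k}|$), not first at $n=p+1$. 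The paper's resolution is the multifold Hopf algebra apparatus of Sections \ref{sec:multFoldHopf}--\ref{sec:possCoalg}, which your proposal has no substitute for: by Frobenius, $y^p$ is primitive in all $n$ circle Hopf algebra structures simultaneously; Proposition \ref{prop:PossIteratedHopfStruc} pins down the possible iterated coproducts; Lemmas \ref{lemma:nPrimElmInTorus} and \ref{lemma:dimOfn-primitive} show simultaneously primitive elements occur only in the degrees of the monomials $\mu_1^{p^{j_1}}\cdots\mu_n^{p^{j_n}}$; and the $p$-adic digit-sum estimates (Lemma \ref{lemma:p-adic_generators}, Corollary \ref{corr:degOfComult}) -- not Dyer--Lashof weights -- are what produce the bound $n\leq p$. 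Relatedly, the resulting isomorphism is \emph{not} the canonical one from the filtration: the generators must be corrected by subtracting lower-filtration terms and $\mu$-monomials (the elements $\widehat{x}$ and $\widetilde{x}$ of Lemma \ref{lemma:inProofTorus}), which ``reading off the comodule primitives'' will not produce. So your plan correctly locates the delicate step but the proposed mechanism for it would not close the argument.
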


The fold and pinch maps on each circle factor in $T^n$ produces $n$ different $\Loday(T^{n-1})$-Hopf algebra structures on $\Loday(T^{n})$.

Let $V(\uord{n})$ be the category with objects subsets of  $2^\uord{n}$ and morphisms
from $U$ to $V$ given by subsets of $U\cap V$, where composition is intersection. 
Define the functor $\SLoday: V(\uord{n}) \rightarrow \catCommRings$, where $\catCommRings$ is the category of commutative rings,
by mapping $U\subset \uord{n}$ to $\SLoday(U) = \Loday(T^{U})$, and sending a 
morphism $W\subset U\cap V$ to $\Loday(T^U\rightarrow T^W\rightarrow T^V)$, where the first map is projection and the second is inclusion. 

The functor $\SLoday$ has the structure of a multifold Hopf algebra, as introduced in section \ref{sec:multFoldHopf}. 
This follows from the fact that the all maps from $T^n$ to $(S^1\vee S^1)^{\times n}$ given by pinching each circle in $T^n$ once, are homotopic.
Simultaneously primitive elements is a generalization of primitive elements to multifold Hopf algebras and are elements which are primitive in all the Hopf algebras structures. 

The calculation of $\Loday(T^n)$ is a double induction proof on the dimension of $T^n$ and the degree of $\Loday(T^n)$. 
Similarly to Hopf algebras, simultaneously primitive elements limits the possible non-zero differentials in the Bar spectral
sequence calculating $\Loday(T^n)$, and using this it is shown to collapse at the $E^2$-page, giving the $\bb{F}_p$-module structure. 
Furthermore, the possible algebra structures on $\Loday(T^n)$ are limited by the simultaneously primitive elements helping us identifying the $\bb{F}_p$-algebra structure. 

The redshift conjecture predicts that under favorable circumstances algebraic K-theory 
increases ``telescopic complexity''. See the introduction to \cite{AusoniRognes02}, \cite{AusoniRognes08} and \cite{BaasDundasRognes04}, 
and also \cite{Rognes14} for a wider perspective.  The simplest sequence of examples 
that should display this phenomenon for all complexities is the following: 
for a given prime $p$ the iterated algebraic K-theory $K^{(n)}(\bb{F}_p)$ should 
have telescopic complexity $n-1$.   In particular, if $k(n-1)$ is the $(n-1)$-st 
connective Morava K-theory with coefficient ring $k(n-1)_*=\bb{F}_p[v_{n-1}]$, the 
redshift conjecture predicts that the element $v_{n-1}\in k(n-1)_*K^{(n)}(\bb{F}_p)$ 
is a nonzero divisor.

Most of the evidence for redshift stems from trace methods: according to 
\cite{BokstedtHsiangMadsen93} the trace $K(A)\to THH(A)=\loday_{S^1}(A)$ factors through the 
inclusion of fixed points under the action of subgroups of $S^1$. Since these 
fixed points provide a very close approximation to $K(A)$, \cite{DundasGoodwillieMcCarthy13}
it is reasonable to hope that chromatic behavior for $K^	{(n)}(\bb{F}_p)$ is 
reflected in similar behavior for fixed points of the iterates 
$THH^{(n)}(\bb{F}_p)=\loday_{T^n}H\bb{F}_p$. See the introduction to \cite{CarlssonDouglasDundas11} for more details.

Theorem~\ref{thm:periodic_elm} below is an indication that this is indeed so. 
While we are not able to establish that $v_{n-1}$ is a nonzero divisor, 
we are able to show it is nonzero in an unprecedented range.

\begin{thm} \label{thm:intro_2}
Let $p\geq 5$ and $1\leq n \leq  p$ or $p=3$ and $1\leq n \leq 2$. 
The unit 
 $$\xymatrix{ k(n-1)_* 
 \ar[r]^-{v_{n-1}} &
 k(n-1)_*(\loday_{T^\uord{n}}\HF)^{hT^n}
 }$$
 maps $v_{n-1}$ to a non-trivial class.
\end{thm}
 We call this class \emph{the Rognes class}.
 
 The proof shows a specific element in 
 the homotopy fixed point spectral sequence, denoted the Rognes element, is not hit by any differential, is a cycle and is the image of $v_{n-1}$.
 The only differential that might hit it is the $d^2$-differential, which is induced by the various circle actions on the factors in $T^n$. 
 That this is not possible is a consequence of there being $n$ circle factors, but only $n-1$ odd dimensional generators $\otau_{i}$ in $k(n-1)_*$ of degree less than the degree of $v_{n-1}$. 

 The calculation of $\Loday(T^n)$ should be possible to generalize
to a calculation of the mod $p$ homotopy groups $V(0)_*(\loday_{T^{n}}\HZ)$
and possibly to the mod $v_1$ homotopy groups $V(1)_*(\loday_{T^n}\ell)$ 
in some range which depends on $p$.

\section*{Acknowledgments}
This article is based on parts of my PhD thesis and I would like to thank my 
supervisor Prof. Bj\o{}rn Ian Dundas for all his helpful discussions, and insight, and 
Prof. John Rognes who did the calculations for low $n$ and shared the results with me. 
I would also like to thank the referee and Prof. Haynes Miller for all the helpful suggestions and corrections, and 
in particular for outlining a restructuring of the sections and outlining a much better proof of Theorem \ref{thm:smash_over_torus}.

\section*{Organization}
Section \ref{sec:Loday} recalls some results about spectra and the Loday functor and in section \ref{sec:calcSphereHF} we explicitly calculate
$\Loday(S^n)$
for $n\leq 2p$. Section \ref{sec:multFoldHopf} introduces
multifold Hopf algebras, and in section \ref{sec:possCoalg} we prove that the
structure of a multifold Hopf algebra puts restriction on the
possible coalgebra structures that can appear in $\Loday(T^n)$. In section
\ref{sec:calcTorus}, we calculate $\Loday(T^n)$ for
$n\leq p$
when $p\geq 5$ and $n\leq 2$ when $p=3$.

In section \ref{sec:perElm} we show that there is an element in the second column of
the homotopy fixed points spectral sequence calculating $k(n-1)_*((\loday_{T^{n}}\HF)^{hT^n})$ that is a cycle and not a boundary,
and represents $v_{n-1}$.

The rest of sections several technical results which have been moved out of the main sections to improve the flow of the arguments.

\section{The Loday Functor} \label{sec:Loday}
We will work in the category of orthogonal spectra. See \cite{HillHopkinsRavenel2016} for 
details. In \cite{MandellMaySchwedeShipley01} they prove that 
the category of orthogonal commutative ring spectra is enriched over
topological spaces, and is tensored and cotensored.

We recall the definition of the Loday functor given in Definition 4.3.9 in \cite{BrunDundasStolz16}. The Loday functor was originally stated for
$\Gamma$-spaces in \cite{BrunCarlssonDundas10}, but restated in 
simpler terms for Orthogonal Ring spectra in Martin Stoltz thesis \cite{Stolz11}, which \cite{BrunDundasStolz16} is heavily based on. 
We will use the $\bb{S}$-model structure on Orthogonal Spectra from \cite{BrunDundasStolz16} as this gives us nice equivariant properties as explained below Definition \ref{def:LodayFunctor} 

\begin{defn} \label{def:LodayFunctor}
 Given a space $X$ and a commutative ring spectrum
$R$ we define 
$$\loday_{X}R = R\otimes X.$$ 
where $R\otimes X$ is the categorical tensor in Orthogonal spectra. 
\end{defn}
 If $G$ is a compact Lie group and $X$ is a $G$-space, then $G$ acts on $R\otimes X$ through the action on $X$. 
 When using the $\bb{S}$-model structure this is the action we are interested in, the one used to calculate among other things TC, topological cyclic homology in the case of $G=S^1$.

 When $X$ is a simplicial space, Lemma 4.3.10 in \cite{BrunDundasStolz16} gives us a natural isomorphism 
$\loday_{|X|}R \cong |\loday_XR|,$
where the realization on the right is in orthogonal spectra. 

\begin{prop} \label{prop:lodayProperties}
The Loday functor has the following properties:
\begin{enumerate}
\item  A weak equivalence $X\rightarrow  Y$ of simplicial sets 
induces a weak equivalence $\loday_XR\rightarrow \loday_YR$.
\item  Given a simplicial set $Y$, there is a natural equivalence
$\loday_X(\loday_YR)\simeq \loday_{X \times Y}R$.
\item \label{prop_part:loday_equiv_coprod} Given a cofibration $L\rightarrow X$ and
a map $L\rightarrow K$ between
simplicial sets there is an equivalence
  $\loday_{X\coprod_LK}R\simeq \loday_XR\wedge_{ \loday_LR}\loday_KR.$
\end{enumerate}
\end{prop}
\begin{proof}
Statement 1 and 2 follows from the definition of the Loday functor, and statement 3 follows from  
the fact that tensor commutes with colimits.
\end{proof}

The following two Hopf algebra structures will be essential for our calculations. 

\begin{prop} \label{prop:lodayS^nIsHopf}
 Let $n\geq 1$, $R$ be a commutative ring spectrum, and assume that
$\pi_*(\loday_{S^n}R)$
is flat as a $\pi_*(R)$-module. 
Then $\pi_*(\loday_{S^n}R)$ is  an $\pi_*(R)$-Hopf algebra with unit and counit
induced by choosing a base point in $S^n$ and collapsing $S^n$ to a point,
respectively. The multiplication and coproduct are induced by the fold map
$\fold: S^n\vee S^n\rightarrow S^n$ and the pinch map $\pinch: S^n\rightarrow
S^n\vee S^n$ given by collapsing a chosen equator through the basepoint, respectively, and the conjugation map is induced by the
reflection map $-\id:S^n\rightarrow S^n$. 
\end{prop}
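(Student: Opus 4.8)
The plan is to transport the cogroup structure of the sphere through the Loday functor. Since $n\geq 1$, $S^n\simeq \Sigma S^{n-1}$ is a suspension and hence a cogroup object in the pointed homotopy category: the pinch map $\pinch\colon S^n\to S^n\vee S^n$ is a coassociative comultiplication, the collapse $S^n\to \ast$ is its counit, and the reflection $-\id\colon S^n\to S^n$ is a coinverse, all of the defining diagrams commuting up to based homotopy. Regarding $S^n$ as based, the basepoint inclusion $\ast\to S^n$ makes $\loday_{S^n}R$ a commutative $\loday_\ast R=R$-algebra, so that $\loday_{(-)}R$ is a functor from pointed spaces to commutative $R$-algebras.

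First I would record monoidality. Applying part (3) of the Proposition on the Loday functor to the pushout $S^n\vee S^n=S^n\coprod_\ast S^n$ gives a natural equivalence $\loday_{S^n\vee S^n}R\simeq \loday_{S^n}R\wedge_R\loday_{S^n}R$ of commutative $R$-algebras, and iterating identifies finite wedges with iterated smash products over $R$. I would then pass to homotopy using the flatness hypothesis: the multiplicative Künneth spectral sequence $\tor^{\pi_*R}_{*,*}(\pi_*\loday_{S^n}R,\pi_*\loday_{S^n}R)\Rightarrow \pi_*(\loday_{S^n}R\wedge_R\loday_{S^n}R)$ is, by flatness of $\pi_*\loday_{S^n}R$ over $\pi_*R$, concentrated on the $\tor_0$-line, so it collapses to a natural isomorphism of graded $\pi_*R$-algebras $\pi_*(\loday_{S^n\vee S^n}R)\cong \pi_*\loday_{S^n}R\otimes_{\pi_*R}\pi_*\loday_{S^n}R$, and similarly for triple wedges.

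Under this identification I would define the operations by applying $\pi_*\circ\loday_{(-)}R$ to the maps on $S^n$. The fold map $\fold\colon S^n\vee S^n\to S^n$ is the codiagonal, so the induced map $\loday_{S^n}R\wedge_R\loday_{S^n}R\to\loday_{S^n}R$ is the codiagonal of $\loday_{S^n}R$ among commutative $R$-algebras, namely its multiplication; hence $\fold$ induces the product $\mult$ and it coincides with the ambient ring structure. The pinch induces the coproduct $\comult$, the collapse the counit $\counit$, the basepoint inclusion the unit $\unit$, and the reflection the antipode $\conj$. The essential point is that $\comult$ and $\counit$ are $\pi_*$ of maps of commutative $R$-algebras, hence algebra homomorphisms, which is exactly the compatibility required of a Hopf algebra. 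The remaining axioms---coassociativity, the counit identities (with $\counit\unit=\id$ coming from $\ast\to S^n\to\ast$), and the antipode identity---follow by applying the homotopy-preserving functor $\pi_*\circ\loday_{(-)}R$ to the corresponding cogroup axiom diagrams for $S^n$, which commute up to based homotopy and therefore strictly on homotopy groups.

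The main obstacle is the homotopy step: one must set up the Künneth isomorphism naturally and multiplicatively and check that under it the map induced by $\fold$ is the tensor-product multiplication (with the correct Koszul signs) while the map induced by $\pinch$ corresponds to $\comult$, so that all the structure maps are identified on the nose. The flatness hypothesis is precisely what forces the spectral sequence to collapse, and is therefore exactly what the statement needs.
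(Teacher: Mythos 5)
Your proposal is correct and follows essentially the same route as the paper: identify $\loday_{S^n\vee S^n}R\simeq \loday_{S^n}R\wedge_R\loday_{S^n}R$ via the pushout property of the Loday functor, use flatness to collapse the Tor spectral sequence of Proposition \ref{prop:barSS} to a K\"unneth isomorphism, and then deduce the Hopf algebra axioms from the (co)group-like diagrams on the sphere itself. If anything, your explicit appeal to the suspension/cogroup structure of $S^n$ holding up to based homotopy is a slightly more careful phrasing of what the paper compresses into ``commutativity of the corresponding diagrams on the level of simplicial sets.''
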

\begin{proof}
By part \ref{prop_part:loday_equiv_coprod} in Proposition \ref{prop:lodayProperties}, $\loday_{S^n\vee S^n}R\simeq
\loday_{S^n}R\wedge_R \loday_{S^n}R$ and since $\pi_*(\loday_{S^n}R)$
is flat as a $\pi_*(R)$-module, $\pi_*(\loday_{S^n}R\wedge_R \loday_{S^n}R)\cong
\pi_*(\loday_{S^n}R)\wedge_{\pi_*(R)}\pi_*( \loday_{S^n}R)$ by Proposition \ref{prop:barSS}. 
That the various diagrams in the definition of a $\pi_*(R)$-Hopf algebra commutes now
follows from commutativity of the corresponding diagrams on the level of
simplicial sets. 
\end{proof}

Given a map of spaces $f:X\rightarrow Y$ we will, when there is no room for
confusion, write  $f$ for both the induced maps $\loday_f\HF\colon
\loday_X\HF\rightarrow \loday_Y\HF$ and $\Loday(f):\Loday(X)\rightarrow
\Loday(Y)$.

\begin{prop} \label{prop:LodayIsHopfAlg}
Let $U$ be a finite set and let $T^U$ be the $U$-fold torus.  For each $u\in U$, if $\Loday(T^U)$ is flat as an
$\Loday(T^{U\setminus u})$-module, then $\left(\Loday(T^U),\Loday(T^{U\setminus u})\right)$ is a commutative Hopf Algebra
where:
\begin{enumerate}
\item Multiplication is induced by the fold map $T^U\amalg_{T^{U\setminus
u}} T^U\cong T^{U\setminus u}\times (S^1\vee S^1)\rightarrow T^U$.
 \item Coproduct is induced by the pinch map $S^1\rightarrow S^1\vee
S^1$ on the $u$-th circle 
in $T^U$.
\item The unit map is induced by the inclusion $U\setminus u\rightarrow U$.
\item The counit map is induced by collapsing the $u$-th circle in $T^U$ to its basepoint.
\end{enumerate}
\end{prop}
\begin{proof}
Since $\loday_{T^U}\HF\cong \loday_{S^1}\loday_{T^{U\setminus u}}\HF$ this
follows from Proposition~\ref{prop:lodayS^nIsHopf}.
\end{proof}

We now set the notation for two maps which are used throughout the article.
\begin{defn} \label{def:circleInc}
Let $X$ be a simplicial set, $R$ be a commutative ring spectrum and let $\bigvee_{x\in X}R$ be the $|X|$-fold wedge sum of $R$, indexed by the elements in $X$.
 Each element $x\in X$ induces a 
 map $\loday_{x}R\rightarrow \loday_{X}R$, and these maps assemble to a natural map
 $$\sCircleInc_X:X_+\wedge R \cong \bigvee_{x\in X}R\cong \bigvee_{x\in X}\loday_{x}R \rightarrow \loday_{X}R.$$
 
Let $Y$ be a simplicial set. Composing $\sCircleInc_X:X_+\wedge \loday_{Y}R\rightarrow \loday_{X\times Y}R $ with the map 
induced by the quotient map $X\times Y\rightarrow X\times Y/(X\vee Y)\cong X\wedge Y$,
 yields a natural map 
$$\sSmashInc_{X}:X_+\wedge \loday_{Y}R\rightarrow \loday_{X\wedge Y}R.$$
\end{defn}

 The map $\sCircleInc_X$ was first constructed in Section 5 of \cite{McClureetAl97}.

\begin{defn} \label{def:circleInLodayMap}
 Composing the maps $\sCircleInc_{S^1}$ and $\sSmashInc_{S^1}$ with a chosen
stable splitting $S^1_+\simeq S^1\vee S^0$, 
induce maps in homotopy
\begin{align*}\pi_*(S^1\wedge R)
\cong
H_*(S^1)\otimes \pi_*(R)
&\rightarrow \pi_*(\loday_{S^1}R) \\
\pi_*(S^1 \wedge \loday_{Y}R)
\cong
H_*(S^1)\otimes \pi_*(\loday_YR) 
&\rightarrow
\pi_*(\loday_{S^1\wedge
Y}R).\end{align*}
We define maps 
\begin{align*}\sOper:\pi_*(R)
&\rightarrow \pi_*(\loday_{S^1}R) \\
\sOper: \pi_*(\loday_YR) 
&\rightarrow
\pi_*(\loday_{S^1\wedge
Y}R).\end{align*} by mapping $z\in \pi_*(R)$ and $y\in \pi_*(\loday_{Y}R)$ to the image of
$[S^1]\otimes z$ and $[S^1]\otimes y$ under $\sCircleInc_{S^1}$ and $\sSmashInc_{S^1}$, 
where $[S^1]$ is a chosen generator of $\widetilde{H}_1(S^1)$.
\end{defn}

The following statement is well known, and is proven in Proposition 5.10 in
\cite{AngeltveitRognes05} for homology, but the same proof works for homotopy. 

\begin{prop} \label{prop:sOperDeriv}
 Let $R$ be a commutative ring spectrum. Then
$\sOper:\pi_*(R)\rightarrow \pi_*(\loday_{S^1}R)$ is a graded
derivation of degree $1$, i.e., 
 $$\sOper(xy) = \sOper(x)y + (-1)^{|x|}x\sOper(y)$$
 for $x,y\in \pi_*R$.
 
Similarly, the composite
 $\sOper:\pi_*(\loday_{S^1}R)\rightarrow \pi_*(\loday_{S^1\times
S^1}R)\rightarrow \pi_*(\loday_{S^1}R)$ where the last map is induced by the
multiplication in $S^1$ is also a derivation. 
\end{prop}

\begin{prop} \label{prop:sOperPrim}
Let $R$ be a commutative ring spectrum, and assume that $\pi_*(\loday_{S^1}R)$
is flat as a $\pi_*(R)$-module. Given $z$ in $\pi_*(R)$,
then 
$\sigma(z)$ is primitive in the $\pi_*(R)$-Hopf algebra
$\pi_*(\loday_{S^1}R)$.
\end{prop}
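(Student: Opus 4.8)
The statement to prove is that $\sigma(z)$ satisfies $\comult(\sigma(z))=\sigma(z)\otimes 1+1\otimes \sigma(z)$ in the $\pi_*(R)$-Hopf algebra of Proposition \ref{prop:lodayS^nIsHopf}, whose coproduct $\comult$ is induced by the pinch map $\pinch\colon S^1\to S^1\vee S^1$. My plan is to leverage the naturality of the transformation $\sCircleInc_X\colon X_+\wedge R\to\loday_X R$ in the simplicial set $X$, together with the description of $\sigma(z)$ as the image of $[S^1]\otimes z$ from Definition \ref{def:circleInLodayMap}.

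First I would record the naturality square obtained by applying $\sCircleInc$ to $\pinch$: its top edge is $\sCircleInc_{S^1}$, its left edge is $\pinch_+\wedge\id_R$, its bottom edge is $\sCircleInc_{S^1\vee S^1}$, and its right edge is the map $\loday_{S^1}R\to\loday_{S^1\vee S^1}R$ induced by $\pinch$. Passing to homotopy and composing with the stable splitting $S^1_+\simeq S^0\vee S^1$, the top edge sends $[S^1]\otimes z$ to $\sigma(z)$, while the right edge becomes $\comult$ after the identification $\pi_*(\loday_{S^1\vee S^1}R)\cong\pi_*(\loday_{S^1}R)\otimes_{\pi_*(R)}\pi_*(\loday_{S^1}R)$ supplied by the flatness hypothesis and Proposition \ref{prop:barSS}. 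Commutativity then reduces the computation of $\comult(\sigma(z))$ to evaluating $\sCircleInc_{S^1\vee S^1}$ on the image of $[S^1]\otimes z$ under $\pinch_+\wedge\id_R$.

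To compute that image I would use that the stable splitting $X_+\simeq S^0\vee X$ is natural for based maps, so that $\pinch_+$ restricts on the reduced summand to $\pinch$ itself; since $\pinch$ is the co-$H$-comultiplication, $\pinch_*[S^1]=\iota_1[S^1]+\iota_2[S^1]$ on reduced first homology, where $\iota_1,\iota_2\colon S^1\to S^1\vee S^1$ are the two inclusions. Thus $(\pinch_+\wedge\id_R)_*([S^1]\otimes z)=(\iota_1[S^1]+\iota_2[S^1])\otimes z$. Applying naturality of $\sCircleInc$ a second time, now to each $\iota_k$, identifies $(\sCircleInc_{S^1\vee S^1})_*(\iota_k[S^1]\otimes z)$ with the image of $\sigma(z)$ under the map $\loday_{S^1}R\to\loday_{S^1\vee S^1}R$ induced by $\iota_k$; under $\loday_{S^1\vee S^1}R\simeq\loday_{S^1}R\wedge_R\loday_{S^1}R$ this is the inclusion of the left, respectively right, smash factor, so the two terms become $\sigma(z)\otimes 1$ and $1\otimes\sigma(z)$. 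Summing yields the claimed coproduct formula.

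I expect the main obstacle to be purely the bookkeeping of two identifications rather than any hard analysis: (i) confirming that the stable splitting is compatible with the based map $\pinch$, so that only the reduced class $[S^1]$ contributes and it transforms via $\pinch_*$; and (ii) matching the maps induced by the wedge inclusions $\iota_1,\iota_2$ with the two tensor factors under the equivalence $\loday_{S^1\vee S^1}R\simeq\loday_{S^1}R\wedge_R\loday_{S^1}R$, which in turn relies on the flatness assumption to pass from $\pi_*$ of a smash over $R$ to a tensor product over $\pi_*(R)$ via Proposition \ref{prop:barSS}. Once these are pinned down, the rest is formal naturality.
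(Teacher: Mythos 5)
Your proposal is correct and follows essentially the same route as the paper: the paper's entire proof is the naturality square for $\sCircleInc$ with respect to $\pinch$, from which it immediately concludes $\comult(\sOper(z))=\sOper(z)\otimes 1+1\otimes\sOper(z)$. Your additional steps — tracking $[S^1]\otimes z$ through $\pinch_+\wedge\id$ via $\pinch_*[S^1]=\iota_1[S^1]+\iota_2[S^1]$ and matching the wedge inclusions with the tensor factors under the flatness identification — are exactly the bookkeeping the paper leaves implicit in its ``Hence.''
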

\begin{proof}
The diagram 
\begin{equation} \label{eq:s_derivation}
\xymatrix{ 
S^1_+\wedge R \ar[r]^-{\sCircleInc} \ar[d]^{\pinch_+\wedge \id} &
\loday_{S^1}R \ar[d]^{\loday_{\pinch}R} \\
(S^1\vee S^1)_+\wedge R \ar[r]^-{\sCircleInc} &
\loday_{S^1\vee S^1}R}\end{equation}
commutes. Hence, $\comult(\sOper(z))=\sOper(z)\otimes 1 + 1\otimes \sOper(z)$.
\end{proof}

\section{Calculating the Homotopy Groups of $\loday_{S^n}\HF$}
\label{sec:calcSphereHF}
In this section we will calculate $\Loday(S^n)$, when $n\leq
2p$ and $p$ is odd. 
First we describe an $\bb{F}_p$-Hopf algebra $\B_n$, and then we show that 
$\Loday(S^n)\cong \B_n$. 

The calculations of $B_n$ were first done in the calculations of the 
Eilenberg MacLane spaces for $\bb{F}$ in \cite{Cartan54}. 

\begin{defn}
 Given the 
letters $\mu$, $\sBar$, $\sBar^k$ and $\varphi^k$ for $k\geq
0$. Define an \emph{admissible word} to be a word such that
\begin{enumerate}
 \item It ends with the letter $\mu$.
 \item If $\mu$ is preceded by a letter, it must be $\sBar$.
  \item If $\sBar$ is preceded by a letter, it must be $\sBar^k$.
  \item If $\sBar^k$ or $\varphi^k$ is preceded by a letter, it must be
  $\sBar$ or $\varphi^l$ for some $l\geq 0$.
\end{enumerate}

We define  a \emph{monic word} to be an admissible word that begins with 
one of the letters $\sBar,
\sBar^0,
\varphi^0$ or $\mu$. 

We define the degree of $\mu$ to be  $2$, and recursively define the
degree of
an admissible word by the rules 
\begin{align*}
 |\sBar x|&= 1+|x| \\
 |\sBar^kx| &= p^k(1+|x|)\\
 |\varphi^kx| &=p^k(2+p|x|).
\end{align*}
\end{defn}

An example of an
admissible word of length $6$ is $\sBar\varphi^m\varphi^l\sBar^k\sBar\mu$. 

\begin{lemma} \label{lemma:AdmissWord}
The following statements hold:
\begin{enumerate}
\item \label{lemma_part:endAdmWord} An admissible word of length at least $3$
always ends with the letter combination $\sBar^k\sBar\mu$
\item \label{lemma_part:sBarAdmWord} There are at most $\frac{n-1}2$ occurrences
of the letter $\sBar$ in an admissible word of even degree of length $n$.  
\item \label{lemma_part:degAdmWord} Every admissible word of length $n$ has
degree at least $n+1$
\item \label{lemma_part:oddAdmWord} All admissible words of odd degree begin
with the letter $\sBar$.
\item \label{lemma_part:modpAdmWord}
Given $0\leq k < p$.
A monic word of degree $2k$ modulo $2p$ is either 
equal to $(\sBar^0\sBar)^{k-1}\mu$, or starts with
the letter combination
$(\sBar^0\sBar)^{k-1}\varphi^0$ or $(\sBar^0\sBar)^{k}$. 
A monic word of degree $2k+1$ modulo $2p$ is either 
equal to $\sBar(\sBar^0\sBar)^{k-1}\mu$, or starts with
the letter combination
$\sBar(\sBar^0\sBar)^{k-1}\varphi^0$ or $\sBar(\sBar^0\sBar)^{k}$. 
\end{enumerate}
\end{lemma}
\begin{proof}
All but the last statement is obvious. 
 The last statement follows from the observation that the degree of a word
starting with $\varphi^l$ or $\sBar^l\sBar $
is $0$
modulo $2p$, when $l\geq1$, and the degree of a  word starting with $\varphi^0$
is $2$ modulo $2p$.
\end{proof}

Let $R$ be a commutative ring, let $x$ and $y$ be of even and odd degree,
respectively. We let $P_R(x)$ be the polynomial ring over $R$ and let $E_R(y)$ be the
exterior algebra over $R$. When $R$ is clear from the setup we often leave it
out of the notation and write $P_p(x)=P(x)/(x^p)$ for the truncated polynomial
ring. Furthermore, we let  $\Gamma(x)$ be the  divided power algebra over
$R$, which as an $R$-module is generated by the elements $\gamma_i(x)$ in degrees
$i|x|$ for $i\geq 0$, with $R$-algebra structure given by
$\gamma_i(x)\gamma_j(x) = \binom{i+j}{j}\gamma_{i+j}(x)$, and $R$-coalgebra
structure given by $\comult(\gamma_{k}(x)) = \sum_{i+j=k}\gamma_i(x)\otimes
\gamma_j(x)$. 

\begin{defn} \label{def:B_n}
We define 
 $\B_{1}$ to be the polynomial $\bb{F}_p$-Hopf algebra $P(\mu)$, with
$|\mu|=2$. 
 Given $n\geq 2$, we define the $\bb{F}_p$-Hopf algebra $\B_{n}$ to be equal to
the tensor product of exterior algebras on all monic words of length
$n$ of odd degrees, and divided power algebras on all
 monic words of length $n$ of even degrees. 
For the divided power algebra structure on $\B_{n}$ we will write
$ \sBar^kx= \gamma_{p^k}(\sBar^0x)$ and $\varphi^kx=\gamma_{p^k}(\varphi^0x)$ 
where $x$ is an admissible word of length $n-1$ and of odd and even degree, respectively.
\end{defn}

For example, the monic words of length $4$ are $\sBar\sBar^k\sBar\mu$ and
$\varphi^0\sBar^k\sBar\mu$ for $k\geq 0$. Hence,  $B_4 = \bigotimes_{k\geq
0}(E(\sBar\sBar^k\sBar\mu)\otimes \Gamma(\varphi\sBar^k\sBar\mu))$.

\begin{prop} \label{prop:B_n}
When $n\geq 2$ there is an isomorphism of $\bb{F}_p$-Hopf algebras
$$B_n\cong \tor^{B_{n-1}}(\bb{F}_p,\bb{F}_p).$$
The map 
$\sigma:\B_{n-1}\rightarrow \B_n$ is determined by $\sigma(x)= \sBar x$ and $\sigma(x) = \sBar^0 x$, when $x$ is an admissible word of even and odd degree, respectively.
\end{prop}
\begin{proof}
This is classical. For a proof see Proposition 7.24 in \cite{McCleary01}.
\end{proof}

Before we calculate $L(S^n)$, we state a technical lemma which is needed in the
proof. Given a graded module $A$, we will write $A_i$ for the part in degree
$i$. 

\begin{lemma} \label{lemma:B_n_primitive}
Let $P(\B_n)$ be the submodule of primitive elements in $\B_n$.
If $2\leq n \leq 2p$ , then $P(B_n)_{2pi-1}=
P(B_n)_{2pi}=0$ for all $i\geq 2$.
\end{lemma}
\begin{proof}
 In a divided power algebra $\Gamma(x)$, the only primitive elements are non-zero scalar multiples of
 $\gamma_1(x)$, so by the graded version of Proposition~3.12 in \cite{MilnorMoore65} and
\ref{prop:B_n}, the
primitive elements in $B_n$ are linear combinations of monic words
 of length $n$.

We will show that the shortest monic word in degree $0$ modulo $2p$ and of
degree greater than $2p$, has length $2p+2$. 

By part~\ref{lemma_part:modpAdmWord} of Lemma~\ref{lemma:AdmissWord},  a monic
word of degree $0$ modulo $2p$ must either be equal to
$(\sBar^0\sBar)^{p-1}\mu$, or start with the letter combination
$(\sBar^0\sBar)^{p-1}\varphi^0$ or $(\sBar^0\sBar)^{p}$. 

The word $(\sBar^0\sBar)^{p-1}\mu$ has degree $2p$, so the shortest 
 monic word in degree $0$ modulo $2p$ of degree greater than $2p$, is thus
$(\sBar^0\sBar)^{p-1}\varphi^0\sBar^k\sBar\mu_0$, for $k\geq 1$, and
it has length $2p+2$.

By a similar argument, we get that the shortest monic word in degree $-1$ modulo
$2p$ of degree greater than $2p$, is 
$\sBar(\sBar^0\sBar)^{p-2}\varphi^0\sBar^k\sBar\mu_0$, for $k\geq 1$, and it
has length $2p+1$. 
\end{proof}

Applying the Loday functor $\Loday(-)$ to the cofiber sequence  
$$\xymatrix{	
S^{n-1} \ar[r] &
D^n \ar[r] &
S^n} $$
gives rise to a bar spectral sequence
$$E^2(S^n)=\tor^{\Loday(S^{n-1})}(\bb{F}_p,\bb{F}_p)\Rightarrow
\Loday(S^n),$$
by Proposition~\ref{prop:lodayProperties} and Proposition~\ref{prop:barSS}.
The spectral sequence is indexed such that the differentials are of the form $d^r:E^r_{s,t}\rightarrow
E^r_{s-r,t+r-1}$. The differentials are only given up to multiplication with a unit. 

For an abelian group $G$ and $m>1$, the bar construction of the Eilenberg-Maclane space $K(G,m)$ is equivalent to $K(G,m+1)$, 
and the spectral sequence $E^2(S^n)$ is analogous to the Eilenberg Moore spectral sequence calculating $K(G,m+1)$ from  $K(G,m)$

The pinch map $\pinch$ induces vertical maps of cofiber sequences
$$\xymatrix{
S^{n-1} \ar[r] \ar[d]^{\pinch}&
D^n \ar[r] \ar[d] &
S^n \ar[d]^{\pinch} \\ 
S^{n-1}\vee S^{n-1} \ar[r] &
D^n\vee D^n \ar[r] &
S^n\vee S^n,}
$$
and this in combination with the reflection map on $S^n$, gives a map of simplicial spectra
$$B(\HF,\loday_{S^{n-1}}\HF,\HF)\rightarrow B(\HF,\loday_{S^{n-1}}\HF,\HF)\wedge_{\HF}
B(\HF,\loday_{S^{n-1}}\HF,\HF)$$ that endows this spectral sequence with a 
$\bb{F}_p$-Hopf algebra structure as explained in Proposition \ref{prop:barSSisCoalg}.
Flatness is no problem, since $\bb{F}_p$ is a field. 

\begin{thm} \label{thm:smash_over_circles}
 When $n\leq 2p$, there are no differentials in the spectral sequence
$E^*(S^n)$, and there is an $\bb{F}_p$-Hopf algebra isomorphism 
  $$\Loday(S^n)\cong B_n.$$
\end{thm}

\begin{proof}
The proof is by induction on $n$. B{\"{o}}kstedt 
calculated in \cite{Bokstedt86} that 
$\Loday(S^1)\cong P(\mu)=B_1$. 

Assume we have proved the theorem for $n-1$.
The bar spectral sequence then becomes
$$E^2(S^n)=\tor^{B_{n-1}}(\bb{F}_p,\bb{F}_p)\cong B_n\Rightarrow
\Loday(S^n).$$

By Proposition~\ref{prop:shortestDiff}, the shortest differential in lowest
total degree goes from an indecomposable element to a primitive element.
We have $E^2(S^n)_{0,*}\cong \bb{F}_p$, so the indecomposable elements in
$B_{n}$
that can support differentials, are generated
by $\sBar^kw$ and
$\varphi^kw$, with $k\geq1$, where $w$ is some admissible word of length $n-1$.
By part~\ref{lemma_part:degAdmWord} in Lemma~\ref{lemma:AdmissWord} these
elements are all in degrees greater than or equal to $4p$, and equal to $0$
modulo $2p$ since $k\geq 1$. Thus if $z$ is an 
indecomposable element, $d^r(z)$ is in
degree $-1$ modulo $2p$, greater than or equal to $4p-1$. 
 By~\ref{lemma:B_n_primitive} there are no
primitive elements in these
degrees when $n\leq 2p$, so there are no differentials in the spectral sequence.
Hence, $E^2(S^n)= E^{\infty}(S^n)$.

To solve the multiplicative extensions we must determine $(\sBar^kw)^p$ and
$(\varphi^kw)^p$ for all $k\geq 0$, and $w$ an admissible word of length $n-1$. 

Assume $z$ is
one of the generators $\sBar^kw$ or $\varphi^kw$ of lowest total
degree
with $z^p\not=0$. 
Then, since the $p$-th powers of primitives are primitive, 
\begin{align*}
 \comult(z^p) &=\comult(z)^p =( 1\otimes z+ z\otimes 1 + \sum z'\otimes z'')^p
  = 1\otimes z^p + z^p\otimes 1+ \sum (z')^p\otimes (z'')^p\\ 
  &= 1\otimes z^p+z^p\otimes 1,
\end{align*}
so $z^p$ must be a  primitive element in degree $0$ modulo $2p$. 
By Proposition~\ref{lemma:B_n_primitive}, this is impossible when $n\leq
2p$ and $|z^p|\geq 4p$, so there are no multiplicative extensions.

When $n\geq 2$ the pinch map $ \pinch:S^n\rightarrow S^n\vee S^n$ is homotopy
cocommutative, i.e. the following
diagram commutes
$$\xymatrix@R=0.5pc{
 & S^n\vee S^n \ar[dd]^-{\twist} \\
S^n\ar[ru]^-{\pinch } \ar[rd]_-{\pinch} \\
& \;S^n\vee S^n,}$$
where $\twist$ interchanges the two factors. 
Cocommutativity is shown by suspending a homotopy between the identity and
antipodal map on $S^1$, picking one of the endpoints of the suspension as the
basepoint in $S^n$, and identifying the suspension of two antipodal points on
$S^1$ to a point,
to define $\pinch$. 

From this is it follows that
$\Loday(S^n)$ is cocommutative as an $\bb{F}_p$-coalgebra when $n\geq2$. Since
$E^2 (S^n)$ is a tensor product of exterior algebras and divided power
algebras, 
Proposition
\ref{prop:coextension_SS} says that there are no coproduct
coextensions. 
Thus $\Loday(S^n)\cong E^\infty (S^n)=E^2(S^n)\cong B_n$ as an $\bb{F}_p$-Hopf
algebra. 
\end{proof}

\section{Multifold Hopf Algebras} \label{sec:multFoldHopf}
The homotopy groups $\Loday(T^n)$ will have several Hopf
algebra structures coming from the various circles in $T^n$. These structures will be
interlinked, and in this section we set up an algebraic framework for this
interlinked structure. 
Our main goal is to state Proposition
\ref{prop:PossIteratedHopfStruc} which is a crucial ingredient in the
calculation of the multiplicative structure of $\Loday(T^n)$.
Throughout this section we will prove statements about the multifold Hopf 
algebra structure of $\Loday(T^n)$ as an illustration of the definitions and propositions.

This exposition of Multifold Hopf algebra leaves several unanswered question. In
particular, it would be
interesting to have a good description of the module of elements that are
primitive in all the Hopf
algebra structures simultaneously. In that regard a generalization of the
very special case in Lemma
\ref{lemma:dimOfn-primitive} would be welcomed. 

Let $\catCommRings$ be the category of commutative rings.
In this section we will assume that all our Hopf algebras are connected and
commutative. 

First we construct a category of Hopf algebras, and show that it has all small
colimits. Objects in this category are ordinary Hopf algebras, but we use the
morphisms to define a multifold Hopf algebra. 

\begin{defn}
The category of  Hopf algebras has objects pairs of
commutative rings $(A,R)$ where $A$ is given the structure of a
commutative connected $R$-Hopf algebra. 
A morphism from $(A,R)$ to $(B,S)$ consists of two maps $f\colon A\rightarrow B$
and
$g\colon R\rightarrow S$ of commutative rings, such that
$f$ is a map of
 $R$-algebras, and $f\otimes_g \id:A\otimes_RS\rightarrow B\otimes_SS\cong B$ is a map of $S$-coalgebras. 
\end{defn}

\begin{prop} \label{prop:pushoutHopfAlg}
The category of Hopf algebras has all small colimits, and the colimit
$\colim_J(A_j,R_j)$ is equal to the pair $(\colim_JA_j,\colim_JR_j)$, of 
colimits in the category of commutative rings. 
\end{prop}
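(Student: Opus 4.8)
The plan is to show that the two forgetful functors $(A,R)\mapsto A$ and $(A,R)\mapsto R$ jointly create colimits, so that for a diagram $j\mapsto(A_j,R_j)$ one simply forms $R=\colim_J R_j$ and $A=\colim_J A_j$ in $\catCommRings$ (which is cocomplete) and then equips $A$ with the structure of a connected commutative $R$-Hopf algebra. First I would manufacture the structure maps on $A$ by assembling those of the $A_j$ into natural transformations of $J$-diagrams valued in $\catCommRings$. The units $\unit_j\colon R_j\to A_j$ are natural: the square relating $\unit_j$ and $\unit_{j'}$ commutes precisely because each transition $f$ is required to be an $R_j$-algebra map, so the $\unit_j$ induce $\unit\colon R\to A$ making $A$ an $R$-algebra. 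Dually the counits $\counit_j\colon A_j\to R_j$ and the antipodes $\conj_j\colon A_j\to A_j$ assemble into $\counit\colon A\to R$ and $\conj\colon A\to A$. Commutativity and connectedness are inherited by the colimit.

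The heart of the argument, and the step I expect to be the main obstacle, is the comultiplication, because the base ring is allowed to vary and so the standard fact that colimits of coalgebras over a \emph{fixed} base are created on underlying modules cannot be quoted directly. Here I would use the canonical comparison maps $c_j\colon A_j\otimes_{R_j}A_j\to A\otimes_R A$ induced by the coprojections $\iota_j\colon A_j\to A$ over $R_j\to R$, and set $\comult|_{A_j}=c_j\circ\comult_j$. To see that this descends to a well-defined map $\comult\colon A\to A\otimes_R A$ out of the colimit I would verify the cocone condition: for a transition $f\colon A_j\to A_{j'}$ the identity $\comult_{j'}\circ f=(f\otimes f)\circ\comult_j$ (that $f$ is a coalgebra map over the base change $g$), together with the compatibility $c_{j'}\circ(f\otimes f)=c_j$ — both sides being the canonical map $A_j\otimes_{R_j}A_j\to A\otimes_R A$, since $\iota_{j'}\circ f=\iota_j$ — yields $c_{j'}\circ\comult_{j'}\circ f=c_j\circ\comult_j$. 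It is worth emphasizing that no isomorphism $\colim_J(A_j\otimes_{R_j}A_j)\cong A\otimes_R A$ is needed; only the mutual compatibility of the $c_j$ is used.

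With all structure maps in hand I would check the Hopf-algebra axioms — coassociativity $(\comult\otimes\id)\comult=(\id\otimes\comult)\comult$, the counit and antipode identities, and compatibility of product with coproduct — by precomposing each identity with the coprojections $\iota_j$. Since $A$ is generated as a ring by the images $\iota_j(A_j)$ and (for a commutative bialgebra) $\comult$, $\counit$ and $\conj$ are all ring homomorphisms, both sides of each identity are ring homomorphisms out of $A$, so it suffices to check equality on each $A_j$, where it reduces to the corresponding axiom already known for $(A_j,R_j)$, again via the compatibility of the $c_j$. This simultaneously shows that each $(\iota_j,r_j)\colon(A_j,R_j)\to(A,R)$ is a morphism in $\catHopfAlg$.

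Finally I would verify the universal property. Given any cocone $(\phi_j,\psi_j)\colon(A_j,R_j)\to(B,S)$, the families $\{\psi_j\}$ and $\{\phi_j\}$ are cocones in $\catCommRings$ and so factor uniquely through ring maps $\psi\colon R\to S$ and $\phi\colon A\to B$. It then remains to see that $(\phi,\psi)$ is a $\catHopfAlg$-morphism, i.e.\ that $\phi$ respects unit, counit and comultiplication over $\psi$; each such relation is an equality of ring homomorphisms out of $A$ (or $R$), so it suffices to check it after precomposing with $\iota_j$ (resp.\ $r_j$), where it reduces to the statement that $(\phi_j,\psi_j)$ is a morphism, using once more that $(\phi\otimes_\psi\phi)\circ c_j=\phi_j\otimes_{\psi_j}\phi_j$, where $\phi\otimes_\psi\phi\colon A\otimes_R A\to B\otimes_S B$ is the map induced by $\phi$ and $\psi$. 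Uniqueness of $(\phi,\psi)$ is immediate from the uniqueness of $\phi$ and $\psi$ as ring maps. This shows $(A,R)=(\colim_J A_j,\colim_J R_j)$ is the colimit, and since $J$ was an arbitrary small diagram, that $\catHopfAlg$ is cocomplete.
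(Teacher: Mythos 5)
Your argument is correct and takes essentially the same route as the paper: both form $(\colim_JA_j,\colim_JR_j)$ in commutative rings, transfer the counit and coproduct from the $A_j$, and verify the universal property by reducing each Hopf-algebra identity to the levelwise ones via the comparison maps into $A\otimes_RA$. The only (cosmetic) difference is that where you work directly with the cocone $c_j\colon A_j\otimes_{R_j}A_j\to A\otimes_RA$, the paper packages these maps into the commuting-colimits isomorphism $\beta\colon\colim_J(A_j\otimes_{R_j}A_j)\cong \colim_JA_j\otimes_{\colim_JR_j}\colim_JA_j$ and defines the coproduct as $\beta\circ\colim_J(\comult_j)$, which is literally the same map as yours.
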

\begin{proof}
The proof is left to the reader.

\end{proof}

Our multifold Hopf algebras will be functors from the following categories. 
Let $S$ be a finite set and define $\catSpanSet{S}$ to be the category 
with objects subsets of $S$ and morphisms
from $U$ to $V$ given by subsets of $U\cap V$, where composition is intersection.
Given an element $s\in S$ we will often write $S\setminus
s$ for $S\setminus \{s\}$ to make the formulas more readable.

Equivalently, $\catSpanSet{S}$ is isomorphic to the category of spans in
$\ord{2}^S$, where $\ord{2}^S$ is the
category with objects subsets of $S$ and morphisms inclusions of sets. 
There is an inclusion $\ord{2}^S\rightarrow \catSpanSet{S}$ given by sending a
morphism $U\subseteq V$ to the morphism $U$ from $U$ to $V$.   

\begin{ex}
 Let $S=\{u,v\}$. Then, the non-identity morphisms in the category $\catSpanSet{S}$ are given in the diagram 
$$
\xymatrix{
\emptyset \ar[r] \ar[d] \ar[rd]&
\{v\} \ar@/_0.5pc/[l] \ar[d] \\
\{u\} \ar@/^0.5pc/[u] \ar[r] &
\{u,v\}, \ar@/_0.5pc/[u] \ar@/^0.5pc/[l], \ar@/_0.5pc/[ul]
}$$
and the image of the non-identity morphisms under the inclusion $\ord{2}^S\rightarrow \catSpanSet{S}$ are
the straight arrows.
\end{ex}

The next definition is only a preliminary step towards the final definition of an
$S$-fold Hopf algebra in \ref{def:S-foldHopfAlgebra}.

\begin{defn} \label{def:preS-foldHopfAlgebra}
 Let $S$ be a finite set viewed as a discrete category, and let $X\subset V(S)\times S$ denote the full subcategory of pairs $(V,v)$ with $v\in V$.
A pre $S$-fold Hopf algebra $A$ is a functor $A:\catSpanSet{S}  \rightarrow 
\catCommRings$, such
that:

For every $v\in V\subseteq S$, the pair $(A(V),A({V\setminus v}))$ is equipped with the
structure of a Hopf algebra with unit and counit induced by the spans
$V\setminus v\leftarrow V\setminus v \rightarrow V$ and $V\leftarrow V\setminus v \rightarrow V\setminus v$, respectively,
such that with this structure the composite
$$\xymatrix{X \ar[r]^-{F} & \catSpanSet{S}\times \catSpanSet{S}\ar[r]^-{A\times A} &
\catCommRings\times \catCommRings,}$$
where $F$ is the functor $F(V,v)=(V,V\setminus v)$, becomes a functor to the category of Hopf algebras.

 We let $\comult_V^v$, $\mult_V^v$, $\unit_V^v$ and
$\counit_V^v$ denote the coproduct, product, unit and counit in the Hopf algebra
$(A(V),A({V\setminus v}))$, respectively.
\end{defn}

For a functor $A:C\rightarrow D$ where the objects of $C$ are finite sets, we will for $V\in C$ write $A_V$ for $A(V)$.

\begin{defn}
 A map from a pre $S$-fold Hopf algebra $A$ to a pre $S$-fold Hopf algebra
$B$ is a natural transformation from $A$ to $B$ such that for every 
$v\in V\subseteq S$ the
induced map from $(A_V,A_{V\setminus v})$ to $(B_V, B_{V\setminus v})$ is a map
of Hopf algebras.  
\end{defn}

Fix a basepoint on the circle $S^1$. 
Let $\catFinSet$ be the category with objects finite sets of natural numbers,
and morphisms
inclusions. 

We define the functor $T:\catFinSet\rightarrow \catTop$ 
by $T(\emptyset) = \point$, and when $U\not= \emptyset$, $T(U)=T^U$, the
$U$-fold torus. On morphisms it takes an inclusion $U\subseteq V$
to the inclusion
$\inc_U^V\colon T^U\rightarrow T^V$, where we use the basepoint in the
factors not in $U$. Furthermore there is the projection map 
$$\pr^V_U\colon T^V\rightarrow T^U.$$

Give the circle $S^1$ the minimal $CW$-structure with one $0$-cell and one $1$-cell, and give the $U$-fold
torus $T^U$ the product $CW$-structure. We write $T_{k}^U$ for the $k$-skeleton of $T^U$.
If $U$ has cardinality $k$, the quotient map 
$$\sphereproj^U:T^U\rightarrow T^U/{T^U_{k-1}}$$ maps to
the $U$-sphere $S^U$.

\begin{defn}
Let $W$ be an object in $\catFinSet$.
 Define the functor $\SLoday:\catSpanSet{W} \rightarrow \catCommRings$
 on an object $V\in \catSpanSet{W}$ by $\SLoday(V) = \Loday(T^{V})$ and on a  map $U:V\rightarrow X$
by $\SLoday(U)=\inc_X^U\circ \pr_U^V$.  
\end{defn}

\begin{prop} \label{prop:TorusPreSFoldHopf}
Let $W$ be an object in $\catFinSet$.
\begin{enumerate}
\item
The functor $\SLoday$ is a pre $W$-fold Hopf algebra, when for all $v\in U\subseteq W$, the pair $\big(\Loday(T^U),\Loday(T^{U\setminus
v})\big)$ are equipped with the
Hopf algebra structure in Proposition~\ref{prop:LodayIsHopfAlg}. 

\item
The map $\sphereproj^W:T^W\rightarrow S^W$ induces a
map of Hopf
algebras 
  $$\big(\Loday(T^W),\Loday(T^{W\setminus j})\big)\rightarrow
\big(\Loday(S^{|W|}),\bb{F}_p\big).$$
\end{enumerate}
\end{prop}
\begin{proof}
 Given $U\subseteq V\subseteq W$ and $v\in U$ we get two homomorphisms of Hopf
algebras $\big(\Loday(T^U),\Loday(T^{U\setminus
v})\big)\rightarrow \big(\Loday(T^V),\Loday(T^{V\setminus v})\big)\rightarrow
\big(\Loday(T^U),\Loday(T^{U\setminus v})\big)$ induced by the inclusion
$U\setminus v
\rightarrow V\setminus v$. 
Hence, $\SLoday$ is a pre $W$-fold Hopf algebra. 

Given subsets $U\subseteq V \subseteq W$, the ring $\SLoday^U_V$, as defined in
Definition~\ref{def:cubeProduct}, is isomorphic to $\Loday(T^{V\setminus
U}\times (S^1\vee S^1)^U)$, since $\SLoday$ commutes with colimits, and the
colimit of the composite 
$$\xymatrix{
T(U)\ar[r]^-{-\cup-} &
\ord{2}^U \ar[rr]^-{-\cup (V\setminus U)} &&
\ord{2}^S \ar[r]^-{T^-} &
\catTop.}$$
is $T^{V\setminus
U}\times (S^1\vee S^1)^U$ where the definitions are as in Definition
\ref{def:cubeProduct}.

\end{proof}

Let $T(S)$ be the full subcategory of $\ord{2}^S\times \ord{2}^S$
with objects pairs $(U,V)$ with $U\cap V=\emptyset$.

\begin{defn} \label{def:cubeProduct}
Let $S$ be a finite set and $A$ a functor $A:\ord{2}^S\rightarrow
\catCommRings$.
 Given finite sets $U\subseteq V \subseteq S$
 we define the functor $F_{A,V}^U$
to be the composite
$$\xymatrix{
T(U)\ar[r]^-{-\cup-} &
\ord{2}^U \ar[rr]^-{-\cup (V\setminus U)} &&
\ord{2}^S \ar[r]^-{A} &
\catCommRings,}$$
and define $A_V^U$ to be the colimit of the functor $F_{A,V}^U$.
\end{defn}

Using the inclusion $\ord{2}^S\rightarrow
\catSpanSet{S}$ this construction applies to any pre $S$-fold Hopf algebra $A$. 

Note that $A^{\emptyset}_{V} = A_V$.

\begin{ex} \label{ex:cubeColimit}
Let $U=\{u,v\}\subseteq V $.
The source category $T(U)$ of $F_{A,V}^U$ is the diagram on the left, and
the image of $F_{A,V}^U$ in commutative rings is the diagram on the right:
$$
\xymatrix{
\{u,v\},\emptyset  &
\{u\},\emptyset \ar[r] \ar[l] &
\{u\},\{v\} \\
\{v\},\emptyset \ar[u]\ar[d]  &
\emptyset,\emptyset \ar[r] \ar[d] \ar[u] \ar[l] &
\emptyset,\{v\} \ar[u] \ar[d] \\
\{v\},\{u\}  &
\emptyset,\{u\} \ar[r] \ar[l] &
\emptyset,\{u,v\}}
\qquad \qquad
\xymatrix{
A_V  &
A_{V\setminus v} \ar[r] \ar[l] &
A_{V} \\
A_{V\setminus u} \ar[u]\ar[d]  &
A_{V\setminus \{u,v\}} \ar[r] \ar[d] \ar[u] \ar[l] &
A_{V\setminus u} \ar[u] \ar[d] \\
A_{V}  &
A_{V\setminus v} \ar[r] \ar[l] &
\;A_{V}.}$$
\end{ex}

\begin{ex}
 For $U\subset V\subset W$ we have 
 $\SLoday^U_V \cong \Loday(T^{V\setminus U}\times (S^1\vee
S^1)^{\times U})$
\end{ex}

\begin{remark}
We will now describe a helpful way to think about the rings $A_V^U$. 
The power set, $P(U)$ of $U$, can be thought of as a discrete category,
with objects the subsets of $U$. 
There is a functor $G$ from $P(U)$ to $T(U)$ given by mapping 
$W\subseteq U$ to the pair $(U\setminus W, W)$. The composite $F_{A,V}^U\circ G$ is the
constant functor $A_V$, so this induces a surjective map on colimits from 
$A_V^{\otimes P( U)}$ to $A_V^U$. 

An element in $A_V^U$ can thus be represented
by an element in $A_V^{\otimes P( U)}$. 
By thinking of the objects in $P(U)$ as the corners of the unit cube in $\bf{R}^U$, 
we can think of the monomials in $A_V^{\otimes P( U)}$ as $U$-indexed cubes with a monomial from $A_V$ in each corner. 

We will write the image of such a
representative in $A_V^{\otimes P( U)}$ as a
sum of $U$-indexed cubes as well.
\end{remark}

\begin{ex}
 Let $U=\{\zu,\zv\}\subseteq V$. 
An element of $A_V^U$ is represented by a sum of cubes 
$$\begin{bmatrix}
   x_{\emptyset} & x_{\{\zv\}} \\ x_{\{\zu\}} & x_{\{\zu,\zv\}}
  \end{bmatrix},$$
where the displayed cube is the notation for $\bigotimes_{I\in P(U)}x_I\in A_V^{\otimes P(U)}$. The four entries in the cube 
correspond to the four corners in
the right diagram in Example \ref{ex:cubeColimit}, and the subscripts are given
by the second set in the four corners in the left diagram. Multiplication is
done
component wise, and we have the following identifications
\begin{align*}
 \begin{bmatrix}
   a_{ {\zu}}x_{\emptyset} & x_{\{\zv\}} \\ x_{\{\zu\}} & x_{\{\zu,\zv\}}
  \end{bmatrix}
  &=
  \begin{bmatrix}
   x_{\emptyset} &  a_{ {\zu}}x_{\{\zv\}} \\ x_{\{\zu\}} & x_{\{\zu,\zv\}}
  \end{bmatrix}
  &
   \begin{bmatrix}
   x_{\emptyset} & x_{\{\zv\}} \\ a_{ {\zu}} x_{\{\zu\}} & x_{\{\zu,\zv\}}
  \end{bmatrix}
  &=
  \begin{bmatrix}
   x_{\emptyset} &  x_{\{\zv\}} \\ x_{\{\zu\}} & a_{ {\zu}}x_{\{\zu,\zv\}}
  \end{bmatrix}
 \\
 \begin{bmatrix}
   a_{ {\zv}}x_{\emptyset} & x_{\{\zv\}} \\ x_{\{\zu\}} & x_{\{\zu,\zv\}}
  \end{bmatrix}
  &=
  \begin{bmatrix}
   x_{\emptyset} &  x_{\{\zv\}} \\a_{ {\zv}} x_{\{\zu\}} & x_{\{\zu,\zv\}}
  \end{bmatrix}
  &
   \begin{bmatrix}
    x_{\emptyset} & a_{{\zv}}x_{\{\zv\}} \\  x_{\{\zu\}} & x_{\{\zu,\zv\}}
  \end{bmatrix}
  &=
  \begin{bmatrix}
   x_{\emptyset} & x_{\{\zv\}} \\ x_{\{\zu\}} &   a_{ {\zv}}x_{\{\zu,\zv\}}
  \end{bmatrix}
\end{align*}
when $a_{\zu}$ is an element in $A_{V\setminus \{\zu\}}\subseteq A_V$, and
$a_{ {\zv}}$ is an element in $A_{V\setminus \{\zv\}}\subseteq A_V$. Observe
that
if $a$ is an element in $A_{V\setminus \{\zu,\zv\}}$ we can move the element
between all four corners of the cube.
\end{ex}

\begin{defn} \label{lemma:cubeMap}
Observe that the colimits of the columns in the right diagram in Example
\ref{ex:cubeColimit} are 
$$\xymatrix{A_V^{\{u\}}  &
A_{V\setminus \{v\}}^{\{u\}} \ar[r] \ar[l] &
A_{V}^{\{u\}}}.$$
Given a map of diagrams 
$$\xymatrix{
A_V \ar[d]^{\comult_V}&
A_{V\setminus v} \ar[r] \ar[l] \ar[d]^{\comult_{V\setminus v}} &
A_{V} \ar[d]^{\comult_V} \\
A_V^{\{u\}}  &
A_{V\setminus v}^{\{u\}} \ar[r] \ar[l] &
\;A_{V}^{\{u\}},}$$
we will write the map on the colimits of the horizontal direction as
$$\begin{bmatrix} \comult_V &\comult_V\end{bmatrix}\colon
A_V^{\{v\}}\rightarrow A_V^{\{u,v\}}.$$
\end{defn}

\begin{lemma} \label{lemma:pushoutOfCubes}
 For $U\subseteq V$ and $v\in V\setminus U$, the universal property of colimits
induces an isomorphism
$$A^U_{V}\otimes_{A^U_{V\setminus v}}A^U_{V}\cong A_{V}^{U\cup v}$$
of commutative rings. 
\end{lemma}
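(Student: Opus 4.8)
The plan is to show that adjoining the direction $v$ to the cube $A_V^U$ glues two copies of that cube along the face where $v$ is absent, and that this gluing is exactly the pushout that computes the tensor product over $A_{V\setminus v}^U$. Concretely, the pushout (equivalently, the coproduct under a common base) in $\catCommRings$ of the span $A_V^U\leftarrow A_{V\setminus v}^U\rightarrow A_V^U$ is the relative tensor product $A_V^U\otimes_{A_{V\setminus v}^U}A_V^U$, so it suffices to exhibit $A_V^{U\cup v}=\colim_{T(U\cup v)}F_{A,V}^{U\cup v}$ as that pushout. First I would stratify the index category $T(U\cup v)$ by the location of $v$: every object $(W_1,W_2)$ (a disjoint pair of subsets of $U\cup v$) falls into exactly one of three classes, namely $v\in W_1$, $v\in W_2$, or $v\notin W_1\cup W_2$. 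Deleting $v$ identifies each class with $T(U)$ via $(W_1\cup v,W_2)\leftrightarrow(W_1,W_2)$, $(W_1,W_2\cup v)\leftrightarrow(W_1,W_2)$, and $(W_1,W_2)\leftrightarrow(W_1,W_2)$ respectively. Using $V\setminus(U\cup v)=(V\setminus U)\setminus v$ and adding $v$ back from $W_1$ or $W_2$, one checks that the restriction of $F_{A,V}^{U\cup v}$ to the first two classes is $F_{A,V}^U$ (with colimit $A_V^U$), while its restriction to the third class is $F_{A,V\setminus v}^U$ (with colimit $A_{V\setminus v}^U$), cf.\ Example \ref{ex:cubeColimit}.

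Next I would analyze the morphisms of $T(U\cup v)$. Since morphisms are componentwise inclusions and $T(U\cup v)$ is a poset, a morphism can only enlarge $W_1$ and $W_2$; hence a morphism between distinct classes can only go from the ``$v$ absent'' class into one of the two ``$v$ present'' classes, via $(W_1,W_2)\mapsto(W_1\cup v,W_2)$ or $(W_1,W_2)\mapsto(W_1,W_2\cup v)$, and never between the two present-classes nor backwards. On colimits, each of these two inter-class families of structure maps induces precisely the canonical map $\lambda\colon A_{V\setminus v}^U\to A_V^U$ arising from the set inclusion $(V\setminus U)\setminus v\subseteq V\setminus U$. Thus $F_{A,V}^{U\cup v}$ is, after computing colimits along the three strata, exactly the span $A_V^U\xleftarrow{\lambda}A_{V\setminus v}^U\xrightarrow{\lambda}A_V^U$.

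Finally I would assemble the colimit in stages. The projection $T(U\cup v)\to(\bullet\leftarrow\bullet\rightarrow\bullet)$ recording the class of $v$ is a Grothendieck opfibration whose fibers are the three copies of $T(U)$ above, so the Fubini theorem for colimits computes $\colim_{T(U\cup v)}F_{A,V}^{U\cup v}$ as the colimit over the span of the fiberwise colimits, namely the pushout of $A_V^U\xleftarrow{\lambda}A_{V\setminus v}^U\xrightarrow{\lambda}A_V^U$. Should a more self-contained argument be preferred, one can instead verify the universal property directly: a cocone under $F_{A,V}^{U\cup v}$ with target a ring $C$ is the same datum as a pair of maps $A_V^U\to C$ (one from each present-class) that agree after precomposition with $\lambda$ (this agreement being forced exactly by the $3\to 1$ and $3\to 2$ morphisms), which is precisely a cocone under the span. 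Either way, $A_V^{U\cup v}\cong A_V^U\otimes_{A_{V\setminus v}^U}A_V^U$.

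The main obstacle is the bookkeeping in Step one and two: correctly identifying the three strata with the diagrams $F_{A,V}^U$, $F_{A,V}^U$, and $F_{A,V\setminus v}^U$, and in particular verifying that the two inter-class families of maps both induce the single canonical map $\lambda$. This symmetric appearance of $A_{V\setminus v}^U$ on both legs of the span is exactly what matches the two occurrences of $A_{V\setminus v}^U$ in the tensor product, mirroring how the Hopf-algebra coproduct $\comult_V^v$ uses the unit $A_{V\setminus v}\to A_V$ on both tensor factors.
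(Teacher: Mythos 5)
Your proof is correct and is essentially the paper's own argument: the paper likewise notes that both sides are colimits of the single functor $F^{U\cup v}_{A,V}$, identifies the three terms of the span as the colimits of its restrictions along the three embeddings $T(U)\to T(U\cup v)$ (adding $v$ to the first set, to the second set, or to neither), and then evaluates the total colimit in two steps with the $v$-direction last. Your stratification of $T(U\cup v)$, the analysis of inter-class morphisms inducing $\lambda$, and the opfibration/Fubini (or direct universal property) justification merely spell out details the paper leaves implicit.
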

\begin{proof}
 Both sides are the colimit of the functor $F^{U\cup v}_{A,V}$. On the
left hand side the colimit is evaluated in two steps, evaluating the $v$-th
direction in the diagram $T(U\cup v)$ last. 

More explicitly, the middle term
$A^U_{V\setminus v}$ is the colimit of the functor $F^{U\cup v}_{A,V}$
precomposed with the inclusion $T(U)\rightarrow T(U\cup v)$. The two outer terms
$A^U_V$ are the colimit of the functor $F^{U\cup v}_{A,V}$ precomposed with the
two maps $T(U)\rightarrow T(U\cup v)$, given by adding $v$ to the first and
second set in $T(U)$, respectively.
\end{proof}

Given a pre $S$-fold Hopf algebra $A$, there are some related multifold
Hopf algebras. Thinking of a pre $S$-fold Hopf algebra as a sum of $S$-cubes, 
with corners indexed by the subset of $S$, 
the first part of the next proposition says that every face
is a pre multifold Hopf algebra in a natural way. 

\begin{prop} \label{prop:moreFoldHopfAlg}
Let $A$ be a pre $S$-fold Hopf algebra. If $U$ and $W$ are subsets of $S$, the
composite 
$$\xymatrix{\catSpanSet{W} \ar[r]^-{-\cup U} &\catSpanSet{S} \ar[r]^-A
&\catCommRings}$$ 
is a pre $W$-fold Hopf algebra.
If $U$ is a subset of $S$ the functor 
$$A^U:\catSpanSet{S\setminus U}\rightarrow \catCommRings$$
given by $A^U(V) = A^U_{V\cup U}$ is a pre $S\setminus U$-fold Hopf
algebra. 
\end{prop}
\begin{proof}
The first case is clear by definition. 
In the second case, for every $U\subseteq V\subseteq S$ and $v\in
V\setminus U$, we need to give a Hopf algebra structure to the pair  
$\big(A^{U}_{V}, A^{U}_{V\setminus v}\big)$ satisfying the definition of a pre
$S\setminus U$-fold Hopf
algebra. 

We claim 
there is a pushout diagram
$$\xymatrix{
\big(A^{U}_{V\setminus u}, A^{U}_{(V\setminus u)\setminus
v}\big) \ar[r] \ar[d]
&
\big(A^{U}_V, A^{U}_{V\setminus v}\big)  \ar[d] \\
\big(A^{U}_V, A^{U}_{V\setminus v}\big) \ar[r] &
\;\big(A^{U\cup u}_V, A^{U\cup u}_{V\setminus  v}\big)}$$
of Hopf algebras. 

The identification of the pushout  follows
from Lemma
\ref{lemma:pushoutOfCubes}. The case $U=\emptyset$ follows from
the definition of a pre $S$-fold Hopf algebra. The other statements follow by 
induction on the cardinal of $U$ and Proposition \ref{prop:pushoutHopfAlg}. 

The universal property of pushouts guarantees that these Hopf algebras combine
to a functor satisfying 
the definition of a pre $S\setminus U$-fold Hopf algebra.
\end{proof}

Composing the various coproducts in a pre $S$-fold Hopf algebra gives rise to
several homomorphisms that we now introduce. An $S$-fold Hopf algebra is a pre
$S$-fold Hopf algebra where these various homomorphisms agree. 

\begin{defn} \label{def:multifoldComult}
Let $A$ be a pre $S$-fold Hopf algebra. Given a pair of sets 
$U\subseteq V \subseteq S$ with
$v\in V\setminus U$ we define  
$$\comult_V^{U,v}\colon A_V^U \rightarrow
A_V^U\otimes_{A_{V\setminus v}^U}A_V^U\cong A_V^{U\cup
v}$$
to be the composition of the coproduct in the Hopf algebra
$\big(A^{U}_{V}, A^{U}_{V\setminus v}\big)$ with the isomorphism from
Lemma \ref{lemma:pushoutOfCubes}

Given a sequence of distinct elements
$u_1,u_2,\ldots ,u_k\in V\subseteq S$, we
define 
$$\comult_V^{u_1,\ldots,u_k}:A_V\rightarrow A_V^{\{u_1,\ldots,
u_k\}}$$ by the recursive formula 
$$\comult_V^{u_{i},\ldots,u_k} = \comult_V^{\{u_{i+1},\ldots,u_{k}\},u_i}
 \circ \comult_V^{u_{i+1},\ldots,u_k}.$$
Similarly, we define 
$$\redcomult_V^{u_1,\ldots,u_k}\colon A_V\rightarrow A_V^{\{u_1,\ldots,
u_k\}}$$ using the reduced
coproducts $\redcomult_V^{\{u_{i+1},\ldots,u_{k}\},u_i}$ which are defined by the maps $\comult_V^{\{u_{i+1},\ldots,u_{k}\},u_i}-1\otimes \id - \id \otimes 1$.
\end{defn}

\begin{defn}
 Let $S$ be a finite set, and let $A$ a pre $S$-fold Hopf algebra.
 An \emph{$S$-fold primitive element} in $A$, is an element which is 
 primitive in $(A_S, A_{S\setminus s})$ for each $s \in S$. When $S$ is clear from context, it will also be denoted a \emph{simultaneous primitive element}
\end{defn}

\begin{defn} \label{def:S-foldHopfAlgebra}
 Let $S$ be a finite set.
An $S$-fold Hopf algebra $A$ is a pre $S$-fold Hopf algebra $A$ 
with the additional requirement that for every sequence $u_1,u_2,\ldots,u_k$ of
distinct elements in $V\subseteq S$, and all permutations $\alpha$ of $k$, 
$$\comult_V^{u_{\alpha(1)},\ldots , u_{\alpha(k)}}=\comult_V^{u_1,\ldots
,u_k}:A_V\rightarrow A_V^{\{u_1,\ldots ,u_k\}}.$$
We denote this map 
$$\comult_V^U\colon A_V\rightarrow A_V^U,$$
where $U=\{u_1,\ldots,u_k\}$.
\end{defn}

\begin{defn}
 A map from an $S$-fold Hopf algebra $A$ to an $S$-fold Hopf algebra
$B$ is a map of pre $S$-fold Hopf algebras. 
\end{defn}

\begin{prop} \label{prop:TorusSFoldHopf} 
Let $W$ be an object in $\catFinSet$.
The functor $\SLoday:\catSpanSet{W} \rightarrow \catCommRings$ is a $W$-fold Hopf algebra
\end{prop}
\begin{proof}
We proved in Proposition \ref{prop:TorusPreSFoldHopf} that $\SLoday$ is a pre $W$-fold Hopf algebra. 
That $\SLoday$ is also a $W$-fold Hopf algebra, follows from the geometric
origin of the coproducts $\comult_V^i$ for $i\in V\subseteq W$. 
Given a sequence $u_1,u_2,\ldots , u_k$ of distinct elements in $V\subseteq
W$, let $U=\{u_1,u_2,\ldots , u_k\}$. The map $$\comult_V^{u_1,u_2,\ldots,
u_k}:\SLoday(V)\cong \Loday(T^V)\rightarrow \SLoday^U_V\cong \Loday(T^{V\setminus
U}\times (S^1\vee S^1)^U),$$ defined in Definition~\ref{def:multifoldComult}, is
induced by the pinch map on every circle in $T^U\subseteq T^V$. Hence, it is
independent of the order of the elements $u_i$ in $\comult_V^{u_1,u_2,\ldots,
u_k}$.
\end{proof}

It shouldn't come as a surprise that when the composition of the coproducts
agree, the composition of the reduced coproducts agree. More precisely:

\begin{prop}
 Let $A$ be an $S$-fold Hopf algebra,  and 
let $u_1,u_2,\ldots,u_k$ be a sequence of distinct elements in $V\subseteq S$. 
 Then 
$$\redcomult_V^{u_{\alpha(1)},\ldots , u_{\alpha(k)}}=\redcomult_V^{u_1,\ldots
,u_k}.$$
for all permutations $\alpha$ of $k$. 
\end{prop}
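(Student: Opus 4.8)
The plan is to treat the coproduct and the reduced coproduct in a single direction as operators and to reduce the symmetry of $\redcomult$ to a short list of commutation relations between operations performed in distinct directions of the cube. Recall that for each $v\in V\setminus U$ the pair $(A_V^U,A_{V\setminus v}^U)$ is a connected Hopf algebra, so its reduced coproduct differs from its coproduct only by the two unit terms: writing $\iota_0^v,\iota_1^v\colon A_V^U\to A_V^{U\cup v}$ for the two coface maps $x\mapsto x\otimes 1$ and $x\mapsto 1\otimes x$ (under the identification of Lemma \ref{lemma:pushoutOfCubes}), we have $\redcomult_V^{U,v}=\comult_V^{U,v}-\iota_0^v-\iota_1^v$. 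Abbreviating ``coproduct in direction $v$'', ``unit insertion in direction $v$'' and ``reduced coproduct in direction $v$'' by $C_v=\comult_V^{\cdot,v}$, $U_v=\iota_0^v+\iota_1^v$ and $R_v=C_v-U_v=\redcomult_V^{\cdot,v}$, the recursive definitions unfold to $\comult_V^{u_1,\ldots,u_k}=C_{u_1}\circ\cdots\circ C_{u_k}$ and $\redcomult_V^{u_1,\ldots,u_k}=R_{u_1}\circ\cdots\circ R_{u_k}$, and likewise for any permutation of the indices. It therefore suffices to prove that the operators $R_{u_1},\ldots,R_{u_k}$ pairwise commute (as maps between the relevant cubes), since a composite of pairwise-commuting operations acting in distinct directions is independent of the order, as adjacent transpositions generate the symmetric group.

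Expanding $R_uR_w=C_uC_w-C_uU_w-U_uC_w+U_uU_w$ and comparing with $R_wR_u$, the identity $R_uR_w=R_wR_u$ for $u\neq w$ follows from three separate commutation relations: (a) $C_uC_w=C_wC_u$; (b) $C_uU_w=U_wC_u$ and $U_uC_w=C_wU_u$; and (c) $U_uU_w=U_wU_u$. Relation (a) is exactly the two-variable instance of the defining axiom of an $S$-fold Hopf algebra, $\comult_V^{u,w}=\comult_V^{w,u}$. Relation (c), and the unit-versus-coproduct part of relation (b), are matters of naturality: by Definition \ref{def:cubeProduct} and Lemma \ref{lemma:pushoutOfCubes} the coface maps $\iota_j^w$ and the structure maps in the $u$-direction are built from the two independent coordinates of the cube $T(\{u,w\})$, so they act on disjoint directions and commute by the universal property of the colimits $A_V^U$. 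The same relations are needed not only out of $A_V$ but at every intermediate cube appearing in the composites; here I would invoke Proposition \ref{prop:moreFoldHopfAlg}, which exhibits each face $A^W$ as a pre multifold Hopf algebra, together with the (routine) observation that the symmetry axiom of Definition \ref{def:S-foldHopfAlgebra} passes to these faces, so that (a)--(c) hold at each stage.

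I expect relation (b)---that inserting a unit in one circle direction commutes with pinching another circle---to be the \emph{main obstacle}, since it requires tracking the coface maps $\iota_0^w,\iota_1^w$ through the iterated colimit description of $A_V^{U\cup\{u,w\}}$ in Lemma \ref{lemma:pushoutOfCubes} and checking that $\comult_V^{U,u}$ carries the unit of $(A_V^U,A_{V\setminus w}^U)$ to the unit of $(A_V^{U\cup u},A_{V\setminus w}^{U\cup u})$; this is where the compatibility of the various Hopf structures is genuinely used rather than just the orthogonality of directions. Once these commutations are established, the equality $\redcomult_V^{u_{\alpha(1)},\ldots,u_{\alpha(k)}}=\redcomult_V^{u_1,\ldots,u_k}$ drops out formally from $R_uR_w=R_wR_u$, mirroring precisely the way full symmetry of the unreduced coproduct is encoded into the definition of an $S$-fold Hopf algebra.
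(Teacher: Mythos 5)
Your proposal is correct and takes essentially the same route as the paper: the paper likewise reduces to the case of transpositions via Proposition \ref{prop:moreFoldHopfAlg} (applied to the intermediate faces $A^{\{u_i,\ldots,u_k\}}$) and then verifies the transposition case by computing both sides in cube notation, which is exactly what your decomposition $R_v = C_v - U_v$ together with relations (a)--(c) makes explicit. The difference is only presentational: your operator calculus spells out the commutations (unreduced symmetry plus leg-wise compatibility of unit insertions with coproducts in distinct directions) that the paper compresses into ``easily checked by calculating the two sides, using the cube notation.''
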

\begin{proof}
By Proposition  \ref{prop:moreFoldHopfAlg}, it
 suffices to check the claim for transpositions, since $A^{\{u_i,\ldots,
u_k\}}$ is an $S\setminus \{u_i,\ldots, u_k\}$-fold Hopf algebra for every
$i\leq k$. That it holds for transpositions is easily checked by calculating the two sides, using the cube notation. 
\end{proof}

We end this section by constructing some special $S$-fold Hopf algebras. 

\begin{defn} \label{def:satSubCat}
 Let $S$ be a finite set. We define a subcategory $\Delta$ of $\catSpanSet{S}$
to be \emph{saturated} if it has the property that if $W\in \Delta$ then
$\catSpanSet{W}\subseteq \Delta$.
\end{defn}

\begin{defn} \label{def:partialHopfAlg}
 Let $S$ be a finite set and let $\Delta$ be a saturated subcategory of
$\catSpanSet{S}$. Define a \emph{partial $S$-fold Hopf algebra} $A\colon\Delta
\rightarrow
\catCommRings$, to be a functor $A$ which for every $W\in \Delta$, is a
$W$-fold Hopf algebra when restricted to $W$. 

\end{defn}

\begin{defn} \label{def:extensionHopfAlg2}
 Let $S$ be a finite set, let $\Delta$ be a saturated subcategory of
$\catSpanSet{S}$, let $\overline{\Delta}$ be the subcategory $\bigcup_{W\in \Delta}\ord{2}^W\subseteq
\Delta$, and let $A\colon\Delta
\rightarrow
\catCommRings$ be a partial $S$-fold Hopf algebra.

The functor  
$$\overline{A}:\catSpanSet{S} \rightarrow \catCommRings$$
defined by $\overline{A}(W)=\colim_{U\subseteq W, U\in \overline{\Delta}}A(U)$
has
the structure of an $S$-fold Hopf algebra and we denote it \emph{the extension
of $A$ to $S$}.

\end{defn}

Since the
category of Hopf algebras has all small colimits, and these are given as pair
of colimits in commutative rings, it is clear that the extension of $A$ is an
$S$-fold Hopf algebra. All the properties of an $S$-fold Hopf algebra follows
from functoriality of the colimit.

\begin{defn} \label{def:restrictionHopfAlg}
Let $S$ be a finite set and let $\Delta$ be a saturated subcategory of
$\catSpanSet{S}$
Given an $S$-fold Hopf algebra $A$, we define 
\emph{the restriction of $A$ to $\Delta$} to be the $S$-fold Hopf algebra which
is
the extension of the functor $$A|_{\Delta}:\Delta \rightarrow \catCommRings.$$
\end{defn}

\begin{ex}
 Let $W$ be a finite set, let $k>0$ be an integer, and let $\Delta$ be the saturated subcategory of
$\catSpanSet{W}$ containing all sets of cardinality at most $k$. 
Let $\SLoday_{\Delta}$ be the restriction of $\SLoday$ to $\Delta$. Then for $U\subseteq W$
$$\SLoday_{\Delta}(U) \cong \Loday(T^U_k)$$
where $T^U_k$ is the $k$-skeleton of $T^U$. 
\end{ex}

\begin{defn}
Let $S$ be a finite set, and let $m$ be a positive even integer. Let
$\Delta\subseteq \catSpanSet{S}$ be the full subcategory containing all sets
with at most one element. Let $A:\Delta\rightarrow \catCommRings$ be the
functor given by $A(\emptyset)=R$ and $A(\{s\})=P_R(\mu_s)$, with
$|\mu_s|=m$. 

We define $P_R(\mu_{-})$, \emph{the polynomial $S$-fold Hopf algebra over $R$
in degree
$m$}, to be the extension of the functor $A$ to all of $S$.
\end{defn}

\begin{ex}
For the set $S=\{s_1,\ldots,s_k\}$ there is an $R$-algebra isomorphism $
P_R(\mu_{S})\cong P_R(\mu_{s_1},\ldots , \mu_{s_k})$, and for each $s\in S$ the
element $\mu_{s}$ is primitive in the Hopf algebra $(P_R(\mu_{S}),
P_R(\mu_{S\setminus s}))$. 

For $U\subseteq V \subseteq S$ there is an $R$-algebra isomorphism 
$$(P_R(\mu_{S}))^U_V \cong P_R(\mu_{V})\otimes_{P_R(\mu_{V\setminus U})}P_R(\mu_{V}).$$

Since $P_R(\mu_{S})$ is generated as an $R$ algebra by the set $\{\mu_{s_1},\ldots , \mu_{s_k}\}$, and $\comult_S^{s_j}(\mu_{s_i}) = \mu_{s_i}$ when $i\not=j$, 
the iterated coproduct 
$$\comult_V^U:P_R(\mu_{V})\rightarrow (P_R(\mu_{S}))^U_V \cong P_R(\mu_{V})\otimes_{P_R(\mu_{V\setminus U})}P_R(\mu_{V})$$
is determined by $\comult_V^U(\mu_{u_i}) = \mu_{u_i}\otimes 1 + 1 \otimes \mu_{u_i}$ for $u_i\in U$ and $\comult_V^U(\mu_{u_i}) = \mu_{u_i}\otimes 1 = 1 \otimes \mu_{u_i}$ for $u_i\in V\setminus U$. 
\end{ex}

\begin{ex}
When $m=2$, and $\Delta$ is the subcategory of
$\catSpanSet{W}$ containing all sets with at most one element, the functor 
$\SLoday_{\Delta}$ is naturally isomorphic to  $P_{\bf{F}_p}(\mu_{-})$.
\end{ex}

\section{Coproduct in a Multifold Hopf Algebra}
\label{sec:possCoalg}
In this section we will state a proposition that we need when we 
calculate the multiplicative structure of $\Loday(T^n)$.

Given an integer $n$ divisible by $p$, we write $\frac{n}{p}$ for the image of
$\frac{n}{p}$ under the ring map $\bb{Z}\rightarrow \bb{F}_p$. 
In the polynomial $\bb{F}_p$-Hopf algebra $P_{\bb{F}_p}(\mu)$, we write
$\divByP{\redcomult(\mu^{p^i})}$ for the image of
$\divByP{\redcomult(\mu^{p^i})}$ under the ring map 
$P_{\bb{Z}}(\mu_n)\rightarrow P_{\bb{F}_p}(\mu_n)$ given by mapping $\mu_n$
to $\mu_n$. This is well defined since $\binom{p^i}{k}$ is
divisible by $p$ for
all $i$ and $k$ with $0<k<p^i$.

\begin{lemma} \label{lemma:HopfAlgebraRelations}
 Let $M$ be an $\bb{F}_p$-module, and let $n$ be a natural
number greater than $2$. Let
$\{r_{k,n-k}\}_{0\leq k \leq n}$ be a subset of $M$ which satisfy 
the relations
$\binom{a+b}{b}r_{a+b,c}=\binom{b+c}{b}r_{a,b+c}$ for all $a+b+c=n$ and
$0<a,c<n$.
Then the following relations hold:
\begin{enumerate}
 \item \label{lemma_part:one_prime} If $n=p^{m+1}$ for some $m\geq 0$, then 
$$r_{k,n-k}=\divByP{\binom{n}{k}}r_{p^m,(p-1)p^m}$$
for all $0<k<n$.
\item \label{lemma_part:n_sum_two_primes} If $n=p^{m_1}+p^{m_2}$ with
$m_1<m_2$ and
$k\not=p^{m_1},p^{m_2}$, then 
$r_{k,n-k}=0$.
\item \label{lemma_part:n_sum_many_primes} If $n\not= p^{m+1}, p^{m_1}+p^{m_2}$
with $m_1<m_2$, then   
$$r_{k,n-k} = \binom{n}{k}n_m^{-1}r_{p^m,n-p^m}$$
for all $0<k<n$, where $n=n_0+n_1p^1+\ldots +n_mp^m$ with $0\leq n_i<p$ and
$n_m\not=0$
is the $p$-adic representation of $n$.
\end{enumerate}

\end{lemma}
The only case which is not covered by the lemma is $n=p^{m_1}+p^{m_2}$
with $m_1\not=m_2$, when the relations in the lemma doesn't give any
relation between
$r_{p^{m_1},p^{m_2}}$ and $r_{p^{m_2},p^{m_1}}$.

\begin{proof}
Given a set $\{r_{k,n-k}\}_{0<k<n}$ of elements in an abelian group, let $\sim$
be the equivalence relation generated by
$\binom{a+b}{b}r_{a+b,c}\sim\binom{b+c}{b}r_{a,b+c}$. Let
$\bb{F}_p\{r_{1,n-1},\ldots ,r_{n-1,1}\}$ be the free
$\bb{F}_p$-module on the set $\{r_{1,n-1},\ldots ,r_{n-1,1}\}$.
Since $M$ is an $\bb{F}_p$-module, there is a
homomorphism
$$\bb{F}_p\{r_{1,n-1},\ldots ,r_{n-1,1}\}/{\sim}\rightarrow M$$
defined by mapping $r_{k,n-k}$ to $r_{k,n-k}$.
Hence it suffices to prove the lemma for the module
$M=\bb{F}_p\{r_{1,n-1},\ldots
,r_{n-1,1}\}/{\sim}$.

Let 
$$k=k_0+k_1p^1+\ldots +k_jp^j$$ 
with $0\leq k_i<p$ and $k_j\not=0$, be
the $p$-adic representations of $k$. Similarly, let 
$$n=n_0+n_1p^1+\ldots +n_mp^m$$
with $0\leq n_i<p$ and
$n_m\not=0$ be the $p$-adic representation of $n$, except that when $n$ is a
power of $p$ we express it as $n=p^{m+1}$. 

The proof consists of two parts. First we prove that unless both $k$ and $n-k$
are powers of $p$, there is a sequence of equations expressing $r_{k,n-k}$
as a multiple of $r_{p^m,n-p^m}$.
The second part is to identify the factor in this equation in terms of
$\binom{n}{k}$. 

We will now use the relations $\binom{a+b}{b}r_{a+b,c}=\binom{b+c}{b}r_{a,b+c}$
to express $r_{k,n-k}$ as a multiple of $r_{p^m,n-p^m}$. 
Lucas' Theorem says that $\binom{n}{k}=\prod_i\binom{n_i}{k_i} \mod p$ 
so $\binom{k}{k-p^j}=\binom{k}{p^j}=k_j$,
giving us
 the equation 
$$r_{k,n-k}=\frac{\binom{n-p^j}{k-p^j}}{\binom{k}{k-p^j}}r_{p^j,n-p^j}.$$
If $j=m$ we are done. Otherwise, if $n>p^j+p^m$, the $m$-th coefficient in
the $p$-adic expansion of $n-p^j$
is at least $1$. Hence $\binom{n-p^j}{p^m}\not=0$, and we have two equations
\begin{align*}r_{p^j,n-p^j} &=
\frac{\binom{p^j+p^m}{p^m}}{\binom{n-p^j}{p^m}}r_{p^j+p^m,n-p^j-p^m} &
r_{p^j+p^m,n-p^j-p^m}&=
\frac{\binom{n-p^m}{p^j}}{\binom{p^j+p^m}{p^j}}r_{p^m,n-p^m}.
\end{align*}

If $n<p^j+p^m$ there is an $i<j$ such that $n_i\not=0$ and the $i$-th
coefficient in the $p$-adic expansion of $n-p^j$ is $n_i$. Hence
$\binom{n-p^j}{p^i}=n_i$ and $\binom{n-p^i}{p^m}=n_m$ and the 
four equations below move these
powers of $p$ back and forth
\begin{align*}r_{p^j,n-p^j} &=
\frac{\binom{p^j+p^i}{p^i}}{\binom{n-p^j}{p^i}}r_{p^j+p^i,n-p^j-p^i} &
r_{p^j+p^i,n-p^j-p^i}&=
\frac{\binom{n-p^i}{p^j}}{\binom{p^j+p^i}{p^j}}r_{p^i,n-p^i} \\
r_{p^i,n-p^i} &=
\frac{\binom{p^i+p^m}{p^m}}{\binom{n-p^i}{p^m}}r_{p^i+p^m,n-p^m-p^i} &
r_{p^i+p^m,n-p^i-p^m}&=
\frac{\binom{n-p^m}{p^i}}{\binom{p^i+p^m}{p^i}}r_{p^m,n-p^m}.\end{align*}
 
Combining three or five equations, respectively,  we get when
$(k,n)\not=(p^j,p^j+p^m)$ with $j<m$, the equation
$$r_{k,n-k}=ur_{p^m,n-p^m}$$
where $u$ is some element in $\bb{F}_p$.

To determine the element $u$ we will take a detour through $\bb{Z}_{(p)}$, the integers
localized at $p$.
In the $\bb{Q}$-module $\bb{Q}\{r_{1,n-1},\ldots ,r_{n-1,1}\}/{\sim}$ we let
$r_{1,n-1}=nr$. 
The formula
$\binom{k}{1}r_{k,n-k}=\binom{n-k+1}{1}r_{k-1,n-k+1}$ and induction, give
the equality $$r_{k,n-k} = \frac{n-k+1}{k}r_{k-1,n-k+1} =
\frac{n-k+1}{k}\binom{n}{k-1}r = \binom{n}{k}r$$
in $\bb{Q}\{r_{1,n-1},\ldots ,r_{n-1,1}\}/{\sim}$.

Thus, if
$n=p^{m+1}$, then 
$r_{k,n-k}=\frac{\binom{p^{m+1}}{k}}{\binom{p^{m+1}}{p^m}}r_{p^m,(p-1)p^m}=
\frac{\frac{\binom{p^{m+1}}{k}}{p}}{\binom{p^{m+1}-1}{p^m-1}}r_{p^m,(p-1)p^m},$
and when $n$ is not a power of $p$,  
$r_{k,n-k}=\frac{\binom{n}{k}}{\binom{n}{p^m}}r_{p^m,n-p^m} =
\binom{n}{k}n_m^{-1}r_{p^m,n-p^m}.$
By Lucas' Theorem, $\binom{p^{m+1}}{k}$ is divisible by
$p$ for every $k$, but neither
$\binom{p^{m+1}-1}{p^m-1}$ nor $n_m$ are divisible by $p$. Hence these
relations exists in the submodule 
$\bb{Z}_{(p)}\{r_{1,n-1},\ldots ,r_{n-1,1}\}/{\sim}\subseteq
\bb{Q}\{r_{1,n-1},\ldots
,r_{n-1,1}\}/{\sim}$. 

By the universal property of localization we get a 
map $$f:\bb{Z}_{(p)}\{r_{1,n-1},\ldots ,r_{n-1,1}\}/{\sim} \rightarrow 
  \bb{F}_p\{r_{1,n-1},\ldots ,r_{n-1,1}\}/{\sim}$$
by mapping $r_{k,n-k}$ to $r_{k,n-k}$. 

By Lucas' Theorem, $\binom{p^{m+1}-1}{p^m-1} = 1 \mod p$
and $\binom{n}{n_mp^m} = 1 \mod p$. So when $n$ is not a power of $p$, 
$f\left(\frac{\binom{n}{k}}{\binom{n}{n_mp^m}}\right) = \binom{n}{k}=u$
proving part \ref{lemma_part:n_sum_many_primes}. In particular when $k\not =
p^j,p^m$ the binomial coefficients $\binom{p^j+p^m}{k}$
are equal to $0$, proving part \ref{lemma_part:n_sum_two_primes} of the lemma.

When $n=p^{m+1}$, then $ 
f\left(\frac{\frac{\binom{p^{m+1}}{k}}p}{\binom{p^{m+1}-1}{p^m-1}}\right) =
\divByP{\binom{p^{m+1}}{k}} = u$ proving part \ref{lemma_part:one_prime}.
\end{proof}

Given a graded module $M$, let $M_{\leq n}$ denote the module
$\bigoplus_{i\leq n}M_n$, and similarly for other inequalities $<,>$ and
$\geq$. 

\begin{defn} \label{def:homoOfDegq}
Let $R$ be a commutative ring. Let $A$ and $B$ be graded $R$-algebras. 
\begin{enumerate}
 \item Let $A$ and $B$ be graded $R$-algebras. 
 An $R$-algebra homomorphism from $A$ to $B$ in degrees less than or equal to 
$q$ is an $R$-module homomorphism $f:A\rightarrow B$ which induces an
$R$-algebra homomorphism on the quotients $A/A_{>q}\rightarrow B/B_{>q}$. 
\item Let $A$ and $B$ be graded $R$-coalgebras. An $R$-coalgebra homomorphism from $A$ to $B$ in degrees less than or equal to 
$q$ is an $R$-module homomorphism $f:A\rightarrow B$ which induces an
$R$-coalgebra homomorphism $A_{\leq q}\rightarrow B_{\leq q}$.
\item Let $A$ and $B$ be graded $R$-Hopf algebras. 
An $R$-Hopf algebra homomorphism from $A$ to $B$ in degrees less than or equal to 
$q$ is an $R$-module homomorphism $f:A\rightarrow B$ which is both an $R$-algebra and $R$-coalgebra homomorphism in degrees less than or equal to $q$, and which preserves the antipode map in degrees less than or equal to $q$. 
\end{enumerate}

\end{defn}

Let $\nat$ denote the natural numbers including $0$, let $\natpos$ denote the strictly positive natural numbers and 
let $\bb{P}$ denote the set of prime powers $\{p^0,p^1,p^2\ldots\}\subseteq \nat$.

\begin{prop} \label{prop:PossibleHopfStructures} 
Let $R$ be an
$\bb{F}_p$-algebra and let $A$ be an $R$-Hopf algebra such that:
\begin{enumerate}
 \item \label{prop_ass:subHopfAlg} There is a sub $R$-Hopf Algebra
$P_R(\mu)\subseteq A$.
 \item \label{prop_ass:splitAlg} There is a chosen retraction $\pr:A \rightarrow P_R(\mu)$ 
 of the $\bb{F}_p$ module inclusion $P_R(\mu)\subset A$ that is a homomorphism of $R$-algebras in degrees less than or equal to $q$.
\item \label{prop_ass:possHopfDiag} In degree less than or equal to $q-1$ this
is a splitting as an $R$-Hopf algebra, i.e.,  
the following diagram commutes
$$\xymatrix{
A \ar[rr]^-{\pr} \ar[d]^{\psi_A} &&
P_R(\mu) \ar[d]^{\psi_{P_R(\mu)}} \\
A\otimes A \ar[rr]^-{\pr\otimes \pr} &&
\;P_R(\mu)\otimes P_R(\mu)}$$
in degree less than or equal to $q-1$. 

\end{enumerate}

Let $x$ be an element in degree $q$ in $\ker(\pr)$. Then there exist elements
$r_n\in R$ for $n\in \natpos$, and $t_{(n_1<n_2)}\in R$ for pairs $(n_1<n_2)\in
\bb{P}\times \bb{P}$, such that
the 
coproduct satisfy
$$ (\pr_{P_R(\mu)}\otimes
\pr_{P_R(\mu)})\circ \comult(x) = \sum_{n \in \natpos}r_n\redcomult(\mu^n) +
\sum_{n\in \bb{P}}r_n\divByP{\redcomult(\mu^n)} + \sum_{(n_1<n_2)\in
\bb{P}\times \bb{P}}t_{(n_1<n_2)}\mu^{n_1}\otimes \mu^{n_2}.$$
\end{prop}

Recall that $\redcomult(\mu^n)=\sum_{k=1}^{n-1} \binom{n}{k}\mu^k\otimes
\mu^{n-k}$ and that when $n$
is
a power of $p$,  $\binom{n}{k}$ is divisible by $p$ for all $0<k<n$. Hence,
$\divByP{\redcomult(\mu^n)}$ is well defined. 

Observe, that since $\redcomult(\mu^{p^i})=0$ for all $i\geq 1$, the first sum
is independent
of the values of $r_{p^i}$.
An example where this proposition applies is the dual Steenrod algebra $A_*$ 
with $P(\xi_1)\subseteq A_*$. Then 
$\redcomult(\xi_2) = \xi_1^p\otimes \xi_1 = \redcomult(\xi_1^{p+1})-\xi_1\otimes \xi_1^p$ so 
$t_{1<p}=-1$ and $r_{p+1} = 1$.

\begin{proof}
 In general $(\pr_{P_R(\mu)}\otimes
\pr_{P_R(\mu)})\circ \comult(x) = \sum_{n \in\nat}
\sum_{a+b=n}r_{a,b}\mu^{a}\otimes \mu^b$, for some $r_{a,b}\in R$.
Since $(\epsilon\otimes \id)\comult = (\id\otimes \epsilon )\comult=\id$ and
$x\in \ker(\pr)$, we
must have that 
$r_{0,n}=r_{n,0}=0$ for all $n$.

By assumption \ref{prop_ass:possHopfDiag} of the
proposition, we have
$$(\pr\otimes
\pr\otimes
\pr)(\redcomult\otimes \id)\redcomult(x) =
\sum_{n\in\nat}\sum_{d+c=n}r_{d,c}\sum_{a+b=d}
\binom{d}{b}\mu^{a}\otimes\mu^b\otimes \mu^c,$$
and
$$(\pr\otimes \pr\otimes\pr)
(\id\otimes \redcomult)\redcomult(x) =
\sum_{n\in\nat} \sum_{a+d=n}r_{a,d}\sum_{b+c=d}
\binom{d}{b}\mu^{a}\otimes \mu^b\otimes \mu^c.
$$

From coassociativity of $\redcomult$ we know that the coefficients
in front of $\mu^a\otimes \mu^b\otimes \mu^c$ in the
two expressions above must be equal. Hence there are relations
$\binom{a+b}{b}r_{a+b,c}=\binom{b+c}{b}r_{a,b+c}$, for all $a,c\geq 1$ and
$b\geq 0$. 

Given such relations, if
$n=p^{m+1}$, then by Lemma \ref{lemma:HopfAlgebraRelations}  
$r_{k,n-k}=\divByP{\binom{n}{k}}r_{p^m,(p-1)p^m}$, and we let $r_n =
r_{p^m,(p-1)p^m}$. 
If $n=p^{m_1}+p^{m_2}$ with $m_1< m_2$, then $r_{k,n-k}=0$
when $k\not=p^{m_1},p^{m_2}$. We let $r_n=r_{p^{m_2},p^{m_1}}$ and
$t_{(p^{m_1}<p^{m_2})}=
r_{p^{m_1},p^{m_2}}-r_{p^{m_2},p^{m_1}}$.
Otherwise, let
$n=n_0+n_1p^1+\ldots +n_mp^m$, with $0\leq n_i<p$ and $n_m\not=0$,
be the $p$-adic representation of $n$. Then
$r_{k,n-k}=\binom{n}{k}n_m^{-1}r_{p^m,n-p^m}$, and we let
$r_n = n_m^{-1}r_{p^m,n-p^m}$. 
\end{proof}

\begin{defn} \label{def:SplitSHopf}
 Let $B\stackrel{f}{\rightarrow} A$ be $S$-fold Hopf
algebras with $R=A_\emptyset\cong B_\emptyset$. We say that $A$ is a $B$ split $S$-Hopf algebra in degree $q\geq 0$ if  
\begin{enumerate}
\item \label{def_ass:splitSFoldHopfAlg} There is a splitting of
$S$-fold Hopf algebras $B\stackrel{f}{\rightarrow}
\widetilde{A}\stackrel{\pr}{\rightarrow} B$, where $\widetilde{A}$ is
the restriction of $A$, as in Definition \ref{def:restrictionHopfAlg}, to the full
subcategory of $\catSpanSet{S}$ not containing $S$.
\item \label{def_ass:extendSplitAlg} In degree less than or equal to $q$, the
map $\pr$ can be extended to
$A_S$, i.e., in degree less than or equal to $q$,  there is an $R$-algebra
homomorphism 
$\pr:A_S\rightarrow B_S$ (see Definition
\ref{def:homoOfDegq}) such that the following diagram commutes
$$\xymatrix{\widetilde{A}_S\ar[r] \ar[d]^{\pr_S} & A_S \ar[dl]^{\pr} \\
\;B_S,}$$
in degree less than or equal to $q$. 
\item \label{def_ass:mapHopfAlg} 
For all $s\in S$,
the map $\pr\colon\big(A_S,A_{S\setminus
s}\big)\rightarrow \big(B_S,B_{S\setminus s}\big)$ is a map of
Hopf algebras  in degree less than or equal to $q-1$. 
\end{enumerate}

\end{defn}

The next proposition is similar to the previous one, but involves
$S$-fold Hopf algebras. Although they are similar, the next proposition doesn't 
specialize to the previous one when $S$ contains exactly one
element, 
since in part \ref{def_ass:splitSFoldHopfAlg} in Definition \ref{def:SplitSHopf}
, $\widetilde{A}=R$ giving
an
impossible splitting $P_R(\mu)\rightarrow R\rightarrow P_R(\mu)$.
Given a finite set $U=\{u_1,\ldots,u_k\}$ we write
$P_R(\mu_U)$ for the polynomial ring $P_R(\mu_{u_1},\ldots, \mu_{u_{k}})$, and 
given an element $m\in\nat^V$ where $U\subseteq V$, we let
$\mu_U^{m}$ in $P_R(\mu_U)$ denote the product $\mu_{u_1}^{m_{u_1}}\cdots
\mu_{u_{k}}^{m_{u_{k}}}$.

\begin{prop} \label{prop:PossIteratedHopfStruc}
Let $A$ be a $P_R(\mu_{-})$ split $S$-Hopf algebra in degree $q$.

Let $x$ be an element in $\bigcap_{s\in S} \ker(\counit_S^s: A_S\rightarrow
A_{S\setminus s})\subseteq A_S$ of degree $q$. If $x\in \ker(\pr)$ and $s\in S$,
then there exist elements $r_b \in R$ for $b\in \natpos^{\times S}$ such that 
for every $s\in S$,
\begin{multline} \label{eq:PossIteratedHopfStruc}
\begin{bmatrix}
 \pr & \pr
\end{bmatrix}
 \circ
\comult^s_S(x) = 
\sum_{b \in
\natpos^{\times S}}
r_{b}\mu_{S\setminus s}^b \redcomult^s_S(\mu_s^{b_s}) +
\sum_{b\in \bb{P}^{\times S}}
r_{b,s}\mu_{S\setminus s}^b\divByP{\redcomult^s_S(\mu_s^{b_s})} \\
+ \sum_{b\in \bb{P}^{S\setminus s}}\sum_{c_1<c_2\in
\bb{P}\times \bb{P}}
t_{b,c_1<c_2,s}\mu_{S\setminus s}^\zb 
\begin{bmatrix}\mu_s^{c_1} & \mu_s^{c_2}\end{bmatrix},
\end{multline}
where $b_{s}$ is the $s$-th component of $b$, and 
$r_{b,s}$ and $t_{b,c_1<c_2,s}$ are
elements in $R$. 
\end{prop}

An important observation is that in the first sum, the coefficients $r_b$ are
independent of the element $s$. The $\bb{P}^{\times S}$ part in
the first sum is zero since $\redcomult^s(\mu_s^{p^i})=0$ for all $i\geq 0$. 
The map $\begin{bmatrix}
 \pr & \pr
\end{bmatrix}$ was given in Definition \ref{lemma:cubeMap}.

\begin{proof}
In this proof we will compare $\redcomult_S^{i,k}(x)$ with $\redcomult_S^{k,i}(x)$
for all pair of elements $i\not= k$ in $S$, where the definition of
$\redcomult^{\zk,\zi}_S$ is found in Definition
\ref{def:S-foldHopfAlgebra}.

For every element $\zi\in S$ the ring $A_S$ is an $A_{S\setminus i}$-Hopf
algebra and $A_{S\setminus i}$ is an $\bb{F}_p$-algebra since  $R=A_\emptyset =
\bb{F}_p$. By part \ref{def_ass:splitSFoldHopfAlg} in Definition \ref{def:SplitSHopf}, the unit
$\unit_S^i:A_{S\setminus i}\rightarrow A_S$ induces an inclusion
$P_{A_{S\setminus i}}(\mu_i)\cong P_R(\mu_{S})\otimes_{P_R(\mu_{S\setminus
i})}A_{S\setminus i}\rightarrow
\widetilde{A}_S\rightarrow A_S$ so assumption \ref{prop_ass:subHopfAlg} 
in Proposition \ref{prop:PossibleHopfStructures} is satisfied for the Hopf
algebra $(A_S,A_{S\setminus i})$. 
The splitting in assumption \ref{prop_ass:splitAlg} in Proposition
\ref{prop:PossibleHopfStructures} comes from the homomorphism $A_S\rightarrow
P_R(\mu_S)\otimes_{P_R(\mu_{S\setminus i})}A_{S\setminus i}\cong
P_{A_{S\setminus i}}(\mu_i)$ induced by $\counit_S^i$ and the splitting in
part \ref{def_ass:splitSFoldHopfAlg} in Definition \ref{def:SplitSHopf}. 
From part \ref{def_ass:mapHopfAlg}  in Definition \ref{def:SplitSHopf} this splitting induces a map of Hopf
algebras $$\big(A_S,A_{S\setminus i}\big)\rightarrow
\big(P_R(\mu_S)\otimes_{P_R(\mu_{S\setminus i})}A_{S\setminus
i},P_R(\mu_{S\setminus i})\otimes_{P_R(\mu_{S\setminus i})}A_{S\setminus
i}\big)\cong \big(P_{A_{S\setminus
i}}(\mu_i),A_{S\setminus i}\big),$$
satisfying assumption \ref{prop_ass:possHopfDiag} in Proposition
\ref{prop:PossibleHopfStructures}.

By Proposition \ref{prop:PossibleHopfStructures}, there
exist elements $r_{\zb,\zi}$ and $t_{\zb,c_1<c_2,\zi}$ in
$R$ such that
\begin{multline} \label{eq:proofPossIteratedHopfStruc}
\begin{bmatrix}\pr\\ \pr\end{bmatrix}
\circ \comult^\zi_S(x) =
\sum_{\zb\in\nat^S}r_{\zb,\zi}
\mu_{S\setminus \zi}^\zb \redcomult(\mu_\zi^{\zb_\zi})+ \sum_{\zb\in
\bb{N}^{S\setminus \zi}|\zb_\zi\in\bb{P}}r_{\zb,\zi}
\mu_{S\setminus \zi}^\zb \divByP{\redcomult(\mu_\zi^{\zb_\zi})} 
\\
+ \sum_{b\in\nat^{S}}\sum_{c_1<c_2\in \bb{P}\times \bb{P}}
t_{\zb,c_1<c_2,\zi}\mu_{S\setminus \zi}^\zb 
\begin{bmatrix}\mu_\zi^{c_1} \\ \mu_\zi^{c_2}\end{bmatrix}.
\end{multline}

Observe that if $b_i=1$, we can choose $r_{b,i}$ arbitrary. 

We will now show that if $b_i\geq 2$ and $b_k=0$ for some $k\not=i$, then
$r_{b,i}=0$. 
The counits $\counit_S^k$ and $\counit_{S\setminus i}^k$ induce a
map of Hopf algebras $(A_S,A_{S\setminus i})\rightarrow (A_{S\setminus
k},A_{S\setminus\{i,k\}})$.  Since $x$ is in
$\bigcap_{s\in S} \ker(\counit_S^s\colon A_S\rightarrow
A_{S\setminus s})$, we have $\comult^i_{S\setminus k}\circ \counit_S^k(x)=0$.
If $r_{b,i}\not=0$, then $\counit^k_S\otimes \counit^k_S(\comult^i_S(x))\not=0$
so the commutative diagram 
$$\xymatrix{
A_S \ar[r]^-{\comult^i_S} \ar[d]^-{\counit_S^k} &
A_S\otimes_{A_{S\setminus i}}A_S \ar[d]^-{\counit_S^k\otimes \counit_S^k} \\
A_{S\setminus k} \ar[r]^-{\comult_{S\setminus k}^i} &
A_{S\setminus k}\otimes_{A_{S\setminus \{i,k\}}}A_{S\setminus k}}$$
gives a contradiction. Thus $r_{b,i}=0$.

From part \ref{def_ass:mapHopfAlg}  in Definition \ref{def:SplitSHopf}, we get a
commutative diagram
$$\xymatrix{
\ker(\counit_S^i) \ar[r]^-{\redcomult_S^i} \ar[d]^-{\redcomult_S^i} &
 A_S^{\{i\}} \ar[r]^-{\redcomult_S^{\{i\},k}} &
 A_S^{\{i,k\}} \ar[d]^-{\pr} \\
 A_S^{\{i\}} \ar[r]^-{\pr} &
 P_R(\mu_S)^{\{i\}} \ar[r]^-{\redcomult_S^{\{i\},k}} &
P_R(\mu_S)^{\{i,k\}},}$$
in degree less than or equal to $q$, 
where the composition of the two morphisms on the top is the definition of
$\redcomult_S^{k,i}$. The diagram commutes in degree less than or equal to $q$, 
and not just $q-1$, since we use the reduced coproduct.

From this diagram we have the formula
\begin{align*}
\begin{bmatrix}
 \pr & \pr \\ \pr &\pr
\end{bmatrix} 
\circ \redcomult^{\zk,\zi}_S(x)
&= 
 \sum_{\zb\in \natpos^S}\sum_{ 0< \za_\zi <
\zb_\zi}\sum_{0< \za_\zk< \zb_\zk}
r_{\zb,\zi}\binom{\zb_\zk}{\za_\zk}\binom{\zb_\zi}{\za_\zi}
\mu_{S\setminus\{\zi,\zk\}}^\zb
\begin{bmatrix}\mu_{\zi}^{\za_\zi} \mu_\zk^{\za_\zk}&
\mu_\zk^{\zb_\zk-\za_\zk} \\ \mu_\zi^{\zb_\zi-\za_\zi} & 1
\end{bmatrix} \\
&+
\sum_{\zb\in \natpos^S|\zb_\zi\in \bb{P}}
\sum_{0< \za_\zi < \zb_\zi}
\sum_{0< \za_\zk< \zb_\zk}
r_{\zb,\zi}\binom{\zb_\zk}{\za_\zk}\divByP{\binom{\zb_\zi}{\za_\zi}}
\mu_{S\setminus\{\zi,\zk\}}^\zb
\begin{bmatrix}\mu_{\zi}^{\za_\zi}\mu_\zk^{\za_\zk} &
 \mu_\zk^{\zb_\zk-\za_\zk} \\ 
 \mu_\zi^{\zb_\zi-\za_\zi}  & 1
\end{bmatrix} \\
&+
\sum_{\zb\in \natpos^{S \setminus \zi}}
\sum_{c_1<c_2\in \bb{P}\times \bb{P}}
\sum_{0< \za_\zk< \zb_\zk}
t_{\zb,c_1<c_2,\zi}\binom{\zb_\zk}{\za_\zk}
\mu_{S\setminus\{\zi,\zk\}}^\zb
\begin{bmatrix}\mu_{\zi}^{c_1}\mu_\zk^{\za_\zk} &
 \mu_\zk^{\zb_\zk-\za_\zk} \\ 
 \mu_\zi^{c_2} & 1 
 \end{bmatrix}.
\end{align*}

The three lines correspond  to the three
summands in equation \ref{eq:proofPossIteratedHopfStruc}. 

Since $A$ is an $S$-fold Hopf algebra,
$\redcomult^{\zk,\zi}_S=\redcomult^{\zi,\zk}_S$ so 
$$\begin{bmatrix}
 \pr & \pr \\ \pr &\pr
\end{bmatrix} \circ\redcomult^{\zk,\zi}_S(x)=
\begin{bmatrix}
 \pr & \pr \\ \pr &\pr
\end{bmatrix} \circ\redcomult^{\zi,\zk}_S(x).$$
In this equation we will now compare the coefficient in front of
$\mu_{S\setminus\{\zi,\zk\}}^\zb
\begin{bmatrix}\mu_{\zi}^{\za_\zi}\mu_\zk^{\za_\zk} &
 \mu_\zk^{\zb_\zk-\za_\zk} \\\mu_\zi^{\zb_\zi-\za_\zi}  & 1
\end{bmatrix}$ for $b\in\natpos^S$, with $0<a_j<b_j$.

We will say that an integer
$b_i\geq 2$ is;
\begin{itemize}
 \item \type{1} if $b_i$ is equal to a
power of the prime $p$,
\item \type{2} if $b_i$ is equal to a sum of two distinct
powers of $p$,
\item and \type{3} otherwise. 
\end{itemize} 
There are six possible cases since $b_i$ and $b_k$ may be interchanged. 

\emph{Case 1, both $b_i$ and $b_k$ are \type{3}:} 

We get the equation
$$\binom{\zb_\zk}{\za_\zk}\binom{\zb_\zi}{\za_\zi} r_{\zb,\zi}=
 \binom{\zb_\zi}{\za_\zi}\binom{\zb_\zk}{\za_\zk} r_{\zb,\zk}.$$
Since neither $b_i$ nor $b_k$ are of \type{1}, there exists integers
$0<a_i<b_i$ and $0<a_k<b_k$ such that $\binom{\zb_\zi}{\za_\zi}\not=0$ and
$\binom{\zb_\zk}{\za_\zk}\not=0$. Thus $ r_{\zb,\zi}=r_{\zb,\zk}$.

\emph{Case 2,  $b_i$ is \type{2} and $b_k$ is \type{3}:}

Let $b_i=p^j+p^l$ with $j<l$. 
When $a_i=p^j$ we get the equation
$$ \binom{\zb_\zk}{\za_\zk}\binom{\zb_\zi}{p^j} r_{\zb,\zi} +
\binom{\zb_\zk}{\za_\zk}t_{\zb,p^j<p^l,\zi} =
 \binom{\zb_\zi}{p^j}\binom{\zb_\zk}{\za_\zk} r_{\zb,\zk}, $$
and when $a_i=p^l$ we get the equation
$$\binom{\zb_\zk}{\za_\zk}\binom{\zb_\zi}{p^l} r_{\zb,\zi} =
 \binom{\zb_\zi}{p^l}\binom{\zb_\zk}{\za_\zk} r_{\zb,\zk}.$$
 By Lucas' Theorem $\binom{\zb_\zi}{p^j}=
\binom{\zb_\zi}{p^l}=1$. 
Since $\zb_\zk$ is not of \type{1}, there exists an $a_k$ such that
$\binom{\zb_\zk}{\za_\zk}\not=0$. The last equation thus gives 
$r_{\zb,\zi}=r_{\zb,\zk}$, and the second equation becomes
$r_{\zb,\zi}+t_{\zb,p^j<p^l,\zi}=r_{\zb,\zk}$, so $t_{\zb,p^j<p^l,\zi}$ must be
equal to $0$. 

The rest are proven similarly, and we just state the results. 

\emph{Case 3,  $b_i$ is \type{1} and $b_k$ is \type{3}:}
 In this case $r_{\zb,\zi}=0$.

\emph{Case 4, both $b_i$ and $b_k$ are \type{2}:}
In this case
$r_{\zb,\zi}=r_{\zb,\zk}$.

\emph{Case 5, $b_i$ is \type{2} and $b_k$ is \type{1}:}
In this case $r_{b,k}=0$, and $r_{b,i}$ is in-determined.

\emph{Case 6, both $b_i$ and $b_k$ are \type{1}:}
In this case both $r_{b,i}$ and $r_{b,k}$ are in-determined. 

From these six cases we will now deduce equation \ref{eq:PossIteratedHopfStruc}
in the proposition. 

Consider an $S$-tuple $b\in\natpos^S$. These fall in five classes:
\begin{enumerate}
 \item All $b_i$'s are equal to $1$.
 \item  All $b_i$-s are of \type{1}, or equal to $1$. 
 \item Exactly one $b_i$ is of \type{2}, and the rest are of \type{1} or equal to $1$.
 \item At least two $b_i$-s are of \type{2}, and the rest are of \type{1} or equal to $1$.
\item At least one $b_i$ is of \type{3}.
\end{enumerate}

We will now consider these cases one by one. 

\begin{enumerate}
 \item We can choose $r_{b,i}$ arbitrary since they don't affect the sum,
  so we let $r_b=0$.
  
\item 
 This correspond to the middle sum
in  equation \ref{eq:PossIteratedHopfStruc}.

\item  Assume $b_i$ is of \type{2}.
By case 5, for all $b_k$ of \type{1} $r_{b,k}=0$, but nothing
can be said about $r_{b,i}$ nor $t_{b,p^j<p^l,i}$. We choose $r_b=r_{b,i}$,
and this correspond to one summand in the first sum and one summand in the last
sum in equation
\ref{eq:PossIteratedHopfStruc}.

\item  Assume $b_i=p^{j_i}<p^{l_i}$ and $b_k=p^{j_k}<p^{l_k}$
are of \type{2}.
Then using case 4 twice, we get that 
$t_{b,p^{j}<p^{l},k}=t_{b,p^{j}<p^{l},k}=0$ and $r_{b,k}=r_{b,i}$. If $b_j$ is
of \type{1}, case 5 shows that $r_{b,j}=0$. We choose $r_b=r_{b,i}$, and this
correspond to the first sum in equation \ref{eq:PossIteratedHopfStruc}.

\item  Case 2
shows that for all $b_k=p^j+p^l$ of \type{2}, 
$t_{b,p^j<p^l,k}=0$ and $r_{b,k}=r_{b,i}$. 
From case 3, $r_{b,k}=0$ for all $k$
with
$b_k$ of
\type{1}. Finally, case 1 says that $r_{b,k}=r_{b,i}$ for all $b_k$ of
\type{3}, so we let $r_{b}=r_{b,i}$. This also correspond to the first sum in 
equation \ref{eq:PossIteratedHopfStruc}.

\end{enumerate}

\end{proof}

\section{Calculating the Homotopy Groups of {$\loday_{T^n}\HF$}}
\label{sec:calcTorus}
In this section we will calculate the homotopy groups 
$\Loday(T^n)$ for $n\leq p$. 
We will use the bar spectral sequence, and the multifold Hopf algebra structure of 
$\Loday(T^n)$ to make the calculation. 

The proof of the Theorem \ref{thm:smash_over_torus} calculating $\Loday(T^n)$ , is very long and spread over several lemmas,
 so we will now give a sketch of how the proof is structured.
 
The outermost layer is a double induction argument on the 
dimension $n$ of the Tori and the degrees of the elements in $\Loday(T^n)$, and 
this induction arguments uses several properties of $\Loday(T^n)$ 
all of which must be proven in the induction step and is thus included in 
Theorem \ref{thm:smash_over_torus}. 

The main calculation in the induction step, is done using the bar spectral sequence $E^*(T^\uord{n})$. We use the B\"okstedt spectral sequence, 
to identify the $E^2$-page of the bar spectral sequence and to show that  
all $d^2$-differentials are zero. 

The collection $\Loday(T^n)$ can be endowed with a multifold Hopf algebra structure, and the degrees, modulo $2p$, 
of the simultaneously primitive elements in 
 $\Loday(T^\uord{n})$ is calculated. Using this we can show that there are no other
non-zero differentials, and hence calculate $E^\infty(T^\uord{n})$.

From $E^\infty(T^\uord{n})$ we can choose a set of $\bb{F}_p$-algebra generators for
$\Loday(T^\uord{n})$, and in a couple of steps we perturb these sets of generators such that they have nicer and nicer properties, and using these
nicer properties we are able to prove all the remaining statements in Theorem \ref{thm:smash_over_torus}, and thus finish the induction step. 
In particular, we need the multifold Hopf algebra structure to get hold of the multiplicative
structure in $\Loday(T^\uord{n})$.

We will now construct a family of bar spectral sequences that will be the
backbone in our calculation of $\Loday(T^\uord{n})$. 

The attaching maps in the
$CW$-structures yield cofiber sequences
$$  \xymatrix{
  S^{n-1} \ar[rr]^-{\attach^\n} &&
  T^\uord{n}_{n-1} \ar[r]  & 
  \, T^\uord{n}.
  }$$
giving an equivalence of commutative $\HF$-algebra spectra
$$B(\loday_{D^n}\HF,\loday_{S^{n-1}}\HF,\loday_{T^\uord{n}_{n-1}}\HF)\simeq
\loday_{T^\uord{n}}\HF.$$
  By Proposition~\ref{prop:barSS}, there is an $\bb{F}_p$-algebra bar
spectral sequence
\begin{align*}
 E^2(T^\uord{n})=\tor^{\Loday(S^{n-1}) }
(\Loday(T^{\uord{n}}_{n-1}), \bb{F}_p) &\Rightarrow
\Loday(T^\uord{n}).
\end{align*}

The spectral sequence is indexed such that the differentials are of the form $d^r:E^r_{s,t}\rightarrow
E^r_{s-r,t+r-1}$. The differentials are only given up to multiplication with a unit. 

For each $i\in \uord{n}$, the pinch of the $i$-th circle in $T^\uord{n}$
induces a map  of cofiber sequences 
 $$\xymatrix{
  S^{n-1} \ar[r]^{\attach^\uord{n}} \ar[d]  &
  T^\uord{n}_{n-1} \ar[d] \ar[r] &
  T^\uord{n}  \ar[d] \\
  S^{n-1}\vee S^{n-1} \ar[r] &
  T^\uord{n}_{n-1}\amalg_{T^{\uord{n}\setminus i}}T^\uord{n}_{n-1}\ar[r] &
  T^{\uord{n}}\amalg_{T^{\uord{n}\setminus i}}T^\uord{n}, }$$
  inducing a map of simplicial spectra  
\begin{multline*}B(\HF,\loday_{S^{n-1}}\HF,\loday_{T^\uord{n}_{n-1}}
\HF) \\ \rightarrow B(\HF,\loday_{S^{n-1}}\HF\wedge_{\HF}\loday_{S^{n-1}}\HF,
\loday_{T^\uord{n}_{n-1}}\HF
\wedge_{\loday_{T^{\uord{n}\setminus i}}\HF}\loday_{T^\uord{n}_{n-1}}\HF ) \\ \simeq
B(\HF,\loday_{S^{n-1}}\HF,\loday_{T^\uord{n}_{n-1}}\HF)\wedge_{\loday_{T^{\uord{n}\setminus i}}\HF}
B(\HF,\loday_{S^{n-1}}\HF,\loday_{T^\uord{n}_{n-1}}\HF).\end{multline*}
  
Hence  by Proposition \ref{prop:barSSisCoalg}, if
$E^r(T^\uord{n})$ is flat as an $\Loday(T^{\uord{n}\setminus i})$-module then
$E^*(T^\uord{n})$ is a spectral sequence of $\Loday(T^{\uord{n}\setminus i})$-Hopf
algebras, and if $\Loday(T^\uord{n})$ is flat as an $\Loday(T^{\uord{n}\setminus i})$-module, then $\Loday(T^\uord{n})$  is an $\Loday(T^{\uord{n}\setminus i})$-Hopf algebra
and the spectral sequence converges to $\Loday(T^\uord{n})$  as an $\Loday(T^{\uord{n}\setminus i})$-Hopf algebra.

\begin{defn} \label{def:labelB_U}
Given a finite ordered set $S=\{s_1<\ldots <s_n\}$ we define an $S$-labeled 
admissible word to be an admissible word of length $n$,
where the first letter is labeled with $s_n$, the second with $s_{n-1}$, and so
forth. 
We define $B_S$ to be the $\bb{F}_p$-Hopf algebra that is a tensor product of
exterior algebras
 on all $S$-labeled  admissible monic words of odd degree and divided power
algebras on all $S$-labeled admissible monic words of even degree. We let
$B_{\emptyset}=\bb{F}_p$ be generated by the empty word in degree zero. 

The operator $\sigma_{s_n}:B_{S\setminus x_n}\rightarrow B_S$ is determined by  $\sigma_{s_n}(x)= \sBar_{s_n} x$ and $\sigma(x) = \sBar_{s_n}^0 x$, when $x$ is an $S\setminus s_n$-labeled	 admissible word of even and odd degree, respectively.
\end{defn}
Forgetting the labels on the letters induces an $\bb{F}_p$-Hopf
algebra isomorphism between $B_S$ and
$B_n$. 
An example of an $S$-labeled word of length $3$
is $\sBar_{s_3}^k\sBar_{s_2}\mu_{s_1}$.

Given a finite subcategory $\simp\subseteq \catFinSet$, we define 
$$T^\simp = \colim_{U\in \simp}T^U.$$ 

\begin{thm} \label{thm:smash_over_torus}
Given $1\leq k\leq p$ when $p\geq 5$ and $1\leq k\leq 2$ when $p=3$, let $\simp$
be a finite subcategory of $\catFinSet$ of
dimension at
most $k$ and let $V$ be a non-empty set in $\catFinSet$ of
cardinality at most $k$. 
\begin{enumerate}
 \item \label{thm_part:factors_Fp} The map $L(f^\uord{k}):\Loday(
S^{k-1})\rightarrow \Loday(T^\uord{k}_{k-1})$
factors through $\bb{F}_p$. 
\item \label{thm_part:SScollapses} When $k\geq 2$, the spectral sequence
$E^*(T^\uord{k})$
collapses on the $E^2$-term.
\item \label{thm_part:algIso}There is a natural isomorphism 
$$\torusIso_V:\Loday(T^V)\rightarrow \bigotimes_{U\subseteq V } B_U,$$ where $B_U$ is
described in Definition~\ref{def:labelB_U}, of functors from $\catSpanSet{\uord{k}}$ to $\bb{F}_p$-algebras 
inducing an isomorphism of $\bb{F}_p$-algebras
$$\Loday(T^\simp)\cong \colim_{U\in \simp}
\Loday(T^U) \cong \bigotimes_{U\in \simp} B_U.$$
In the sequel, we will identify $\Loday(T^V)$ and $\bigotimes_{U\subseteq V } B_U$ by means of $\torusIso_V$.

\item \label{thm_part:wedge_projection} 
For every $v\in V$, the projection maps $$ \pr:\Loday(T^V)\stackrel{\alpha}{\cong}
\bigotimes_{U\subseteq V} B_U\rightarrow \bigotimes_{i\in
V}B_{\{i\}}.$$  induces maps of Hopf algebras 
$$(\Loday(T^{V}),\Loday(T^{V\setminus v}))\stackrel{\pr}{\rightarrow}(P(\mu_V),P(\mu_{V\setminus v})).$$

\item \label{thm_part:sigmaElements} Assume $|V|\geq 2$ and let $v$ be the
greatest integer in $V$.

The operator
$$\sOper:\Loday(T^{V\setminus v})\rightarrow \Loday(T^V)$$ is determined by the
fact that $\sOper$ is a derivation and that 
$\sOper(z)= \sBar_v z$ and $\sOper(z) = \sBar_v^0z$ when $\emptyset \not=
U\subseteq V\setminus v$ and $z$ is an
$U$-labeled
admissible word in $ B_{U}\subseteq \Loday(T^{V\setminus
v})$ of even and odd
degree, respectively.

In particular, for any $z\in \Loday(T^{V\setminus v})$, $\sOper(z)$ is in the
kernel of $$ \pr:\Loday(T^V)\stackrel{\alpha}{\cong}
\bigotimes_{U\subseteq V} B_U\rightarrow \bigotimes_{i\in
V}B_{\{i\}}.$$

\item \label{thm_part:sphere_projection} There is
a commutative diagram 
$$\xymatrix{\Loday(T^V) \ar[r]^-\torusIso \ar[d]^-{\sphereproj^V} &
  \bigotimes_{U\subseteq V} B_U \ar[d]^{\pr} \\
 \Loday(S^V) \ar[r]^-\cong &
 B_V,}$$
where the bottom isomorphism is the one from
Theorem~\ref{thm:smash_over_circles}, together with the canonical isomorphism
$B_{|V|}\cong B_V$ given by labeling the words in $B_{|V|}$.

\end{enumerate}
\end{thm}

The range $k\leq p$ comes from all the lemmas in Section
\ref{sec:primElm} concerning the degrees of primitive elements. It is
possible that this range could be improved by getting better control of the
degrees of the primitive elements.

When $k=p=3$ part~\ref{thm_part:factors_Fp} and~\ref{thm_part:SScollapses} of
the theorem still holds, but we are not able to determine the multiplicative
structure of $\Loday(T^\uord{3})\cong E^\infty(T^\uord{3})\cong
\Loday(T^\uord{3}_2)\otimes B_3$. This is because the degrees of
$\gamma_{p^{k+1}}(\sBar^0\sBar\mu)\in \Gamma(\sBar^0\sBar\mu) =B_3$ equals the
degrees of $\mu_1^{p^k+p^{k+1}}\mu_2^{p^k}\mu_3^{p^k}$. Thus, we can't use
Proposition~\ref{prop:PossIteratedHopfStruc} to show that
$(\gamma_{p^{k+1}}(\sBar^0\sBar\mu))^p$ is a simultaneously primitive element in the $\uord{3}$-fold Hopf algebra $\Loday(T^\uord{3})$. 

The idea to look at the simultaneously primitive elements to show that the
spectral sequence collapses on the $E^2$-term originated from a note by John
Rognes, where he showed that the spectral sequence $E^*(T^\uord{3})$ collapses on the $E^2$-term.

\begin{remark}
It should be possible to prove a similar result for 
 $V(0)_*(\loday_{T^\uord{n}}\HZ)$. The
difference would be the degrees of the elements in the rings, and thus the
degrees of the simultaneously primitive elements. The arguments in Section
\ref{sec:calcSphereHF} would thus have to be adjusted for these new elements,
and possibly you would want to work modulo $2p^2$ instead of modulo $2p$.
\end{remark}

We need this corollary to
identify the $E^2$-term $E^2(T^\uord{n})$ and show that there are no
$d^2$-differentials. 

\begin{corr} \label{corr:noD2diffTorus} 
Given $n$, assume Theorem~\ref{thm:smash_over_torus} holds when $1\leq
k\leq n-1$.
Given $m\geq0$, if $\attach^\uord{n}\colon \Loday(S^{n-1})\rightarrow
\Loday(T^\uord{n}_{n-1})$ factors through $\bb{F}_p$ in degrees less than or
equal to $2pm-1$ and the spectral
sequence $E^*(T^\uord{n})$ collapses in total
degrees
less than or equal to $2pm-1$ (that is $E^2(T^\uord{n})=E^\infty(T^\uord{n})$ in
these degrees), then:
\begin{enumerate}
 \item The map $\attach^\uord{n}\colon \Loday(S^{n-1})\rightarrow
\Loday(T^\uord{n}_{n-1})$ factors through $\bb{F}_p$ in degrees less than or
equal to $2p(m+1)-2$.
\item The spectral sequence $E^*(T^\uord{n})$ collapses in total
degrees less than or equal to $2p(m+1)-2$.
\end{enumerate}

\end{corr}
\begin{proof}
 From Lemma \ref{lemma:bokstedttorus} we know that as an $\bb{F}_p$-module 
$$H_*(\loday_{T^n}\HF)_{\leq 2p(m+1)-2} \cong (A_*\otimes \bigotimes_{U\subseteq
\uord{n}} B'_U))_{\leq 2p(m+1)-2}.$$
Since $\loday_{T^\uord{n}}\HF$ is a generalized Eilenberg Mac~Lane spectrum, the
Hurewicz homomorphism induces an isomorphism between the 
$\bb{F}_p$-modules  $A_*\otimes
\Loday(T^\uord{n})\cong A_*\otimes E^\infty(T^\uord{n})$ and 
$H_*(\loday_{T^\uord{n}}\HF)$.

The $E^1$-terms of the bar spectral sequence $E^1(T^\uord{n})$ 
is the two-sided bar complex
$$E^1_{s,*}(T^\uord{n})=B_s(\Loday(T^{\uord{n}}_{n-1}),B_{n-1},\bb{F}_p)\cong
\Loday(T^{\uord{n}}_{n-1})\otimes\B_{n-1}^{\otimes s}\otimes
\bb{F}_p$$ where the $\B_{n-1}^{\otimes s}$ module structure on $\Loday(T^{\uord{n}}_{n-1})$ is induced by $\attach^\uord{n}$, 
and the
differential
$d^1:E^1_{s,t}(T^\uord{n})\rightarrow E^1_{s-1,t}(T^\uord{n})$ 
 is given by
$$d^1(a\otimes b_1\otimes\cdots \otimes b_{s+1}) =
a\attach^\uord{n}(b_1)\otimes b_2\otimes \cdots \otimes b_{s+1} +
\sum(-1)^i
a\otimes b_1\otimes \cdots \otimes b_ib_{i+1}\otimes\cdots \otimes b_{s+1}.$$

If $\attach^\uord{n}$ factors through $\bb{F}_p$ in degrees less than $l$,
then 
$$E^2(T^\uord{n})=
\tor^{\Loday(S^{n-1})}(\Loday(T^\uord{n}_{n-1}),\bb{F}_p)
\cong\Loday(T^\uord{n}_{n-1})
\otimes\tor^{\Loday(S^{n-1})} (\bb{F}_p,\bb{F}_p) =\Loday(T^\uord{n}_{n-1 }
)\otimes B_\uord{n}.$$
in bidegrees $(s,t)$ with $t<l$.  Furthermore, 
$$E^2_{0,l}(T^\uord{n})\cong \Loday(T^\uord{n}_{n-1})/\im(\attach^{\uord{n}})$$

If $\attach^\uord{n}$ doesn't factor through
$\bb{F}_p$ in degrees $l\leq 2p(m+1)-2$ then the dimension of $A_*\otimes
E^2(T^\uord{n})$ in total degree $l$ is smaller than the dimension of
$H_*(\loday_{T^n}\HF)$ in degree $l$, giving us a contradiction. 

Thus $\attach^\uord{n}$ factors through $\bb{F}_p$ in degrees less than or
equal to $2p(m+1)-2$.

By a similar argument if there are any non-zero $d^r$-differentials in
$E^r(T^\uord{n})$ starting in total degrees less than or equal to $2p(m+1)-1$,
the dimension of $A_*\otimes
E^r(T^\uord{n})$ in the degree of the image of this differential will be
smaller than the dimension of $H_*(\loday_{T^\uord{n}}\HF)$ in this degree.

Thus the spectral sequence $E^*(T^\uord{n})$ collapses in total
degrees less than or equal to $2p(m+1)-2$.
\end{proof}

\begin{proof}[Proof of Theorem~\ref{thm:smash_over_torus}]
 The proof is by induction. Given $n$, with $1\leq n\leq p$ when $p\geq 5$ and
$1\leq n\leq 2$ when $p=3$, assume the theorem holds
for all $k$ with $1\leq k< n$. In this proof we denote this \emph{the torus-induction hypothesis}.
The only place in the proof where there is a difference between $p=3$ and
$p\geq 5$ is when we invoke Corollary~\ref{corr:degOfComult} in the proof of
part~\ref{thm_part:algIso}.

When $n=2$, the theorem holds since
$\Loday(T^U_1)\cong P(\mu_U)$.

\emph{Proof of part~\ref{thm_part:factors_Fp}~and~\ref{thm_part:SScollapses} :}
We prove it by induction on the degrees of elements in part
\ref{thm_part:factors_Fp} and total degrees in part~\ref{thm_part:SScollapses}.
Given $m$, assume that part~\ref{thm_part:factors_Fp} and
\ref{thm_part:SScollapses}
 holds in degrees less than or equal to $2pm-1$. This is trivially true
when $m=0$. 

By Corollary~\ref{corr:noD2diffTorus}, part~\ref{thm_part:factors_Fp} and
\ref{thm_part:SScollapses} holds in
(total) degrees less than or equal to $2p(m+1)-2$.
We must thus show that they hold in degree $2p(m+1)-1$. 

The attaching map
$\attach^\uord{n}:\Loday(S^{n-1})\rightarrow \Loday(T^\uord{n}_{n-1})$ is
determined by what it does on the set of algebra generators in
$\Loday(S^{n-1})$ given by the monic words of length $n-1$, and by Lemma~\ref{lemma:indecWord} there are no
such element in degrees $-1$ modulo $2p$, and hence $\attach^\uord{n}$
factors through $\bb{F}_p$ in degrees less than or equal to $2p(m+1)-1$. So,
in vertical degrees less than or equal to $2p(m+1)-1$ the K\"unneth isomorphism yields an
$\Loday(T^\uord{n}_{n-1})$-module isomorphism 
$$E^2(T^\uord{n})=\tor^{\Loday(S^{n-1})}(\Loday(T^\uord{n}_{n-1}),\bb{F}_p)\cong
\Loday(T^\uord{n}_{n-1})\otimes \tor^{\Loday(S^{n-1})}(\bb{F}_p,\bb{F}_p)\cong
\Loday(T^\uord{n}_{n-1})\otimes B_n.$$

It remains to show that there are no $d^r$-differentials in $E^r(T^\uord{n})$
starting in total degrees $2p(m+1)$.  For every $i$ in $\n$, $E^2(T^{\uord{n}})$
is an
$\Loday(T^{\uord{n}\setminus i})$-Hopf algebra
spectral
sequence, since $E^2(T^{\uord{n}})$ is flat over $\Loday(T^{\uord{n}\setminus i})$. 
The Hopf algebra structure on $E^2(T^\uord{n})$ is the
tensor product of the $\Loday(T^{\uord{n}\setminus i})$-Hopf algebra structures on $
\Loday(T^\uord{n}_{n-1})$ and the $\bb{F}_p$ Hopf algebra structure on $B_n$. Thus, by Proposition
\ref{prop:shortestDiff}, a shortest non-zero
differential in lowest total degree, must go
 to a primitive element in the $\Loday(T^{\uord{n}\setminus i})$-Hopf algebra structure.
Hence, if a shortest non-zero differential
starts in total degree $2p(m+1)$, there must be elements in degree $2p(m+1)-1$
that are primitive in the $\Loday(T^{\uord{n}\setminus i})$-Hopf algebra structure for
all $i\in \uord{n}$.

The 
$\Loday(T^{\uord{n}\setminus i})$-primitive elements in $\Loday(T^\uord{n}_{n-1})\otimes
B_n$ are by
the graded version of Proposition
3.12 in \cite{MilnorMoore65} linear combinations of primitive
elements in $\Loday(T^\uord{n}_{n-i})$ and $B_n$. By the graded version of Proposition
3.12 in \cite{MilnorMoore65} the module of
$\Loday(T^{\uord{n}\setminus i})$-primitive elements in
$B_n$ is
$\Loday(T^{\uord{n}\setminus i})\{X_n\}$, where $X_n$ is the set of monic words
in $B_n$. The intersection $\bigcap_{i\in
\uord{n}}\Loday(T^{\uord{n}\setminus i})\{X_n\}$ is equal to $\bb{F}_p\{X_n\}$ since
$\bigcap_{i\in
\uord{n}}\Loday(T^{\uord{n}\setminus i})=\bb{F}_p$. 
Thus, the module of elements in $B_n\subseteq E^2(T^\uord{n})$ that are
primitive in  the $\Loday(T^{\uord{n}\setminus i})$-Hopf algebra structure for every
$i\in \uord{n}$
 is $\bb{F}_p\{X_n\}\subseteq B_n$, 
which is isomorphic to the module of $\bb{F}_p$-primitive elements in $B_n$,
under the projection map $E^2(T^\uord{n})\rightarrow B_n$. By
Proposition
\ref{lemma:B_n_primitive} there are no $\bb{F}_p$-primitive elements in
$B_n$ in degrees $-1$ modulo $2p$ when $n\leq 2p$. Hence, there are no
differentials starting in total degree $2p(m+1)$ that have target in filtration
1 or higher.

It remains to show that there are no differentials starting in total degree
$2p(m+1)$ that have non-zero target in filtration $0$. This is only possible if
there are $\uord{n}$-fold primitive elements in
$\Loday(T^\uord{n}_{n-1})$ in the target of the differential. 
If $z$ is an indecomposable element in $B_n$ in degree $2p(m+1)$, 
 Corollary~\ref{corr:dimOfnFoldPrim} says there are
no $\n$-fold primitive elements in $\Loday(T^\uord{n}_{n-1})$ in degree
$2p(m+1)-1$ when $n\leq p$. 

Hence, there are no differentials in
$E^*(T^{\uord{n} })$ when $n\leq p$, so $E^*(T^\uord{n})$ collapses on the
$E^2$-term. Since $E^2(T^\uord{n})\cong E^\infty(T^\uord{n})$, $E^2(T^\uord{n})$ is flat as an $\Loday(T^{\uord{n}\setminus i})$-module, and $\Loday(T^\uord{n})$ is flat as an $\Loday(T^{\uord{n}\setminus i})$-module, 
the spectral sequence converges to $\Loday(T^\uord{n})$ as an $\Loday(T^{\uord{n}\setminus i})$-Hopf algebra.

\emph{Proof of part~\ref{thm_part:algIso} and 
\ref{thm_part:wedge_projection}: }
 We will only show the theorem for the set $V=\uord{n}$.
  
\begin{enumerate}
 \item Let $\basis_{1}^{\text{odd}}$ and $\basis_{1}^{\text{even}}$ and
  be the sets of all admissible
words of length
$n$ starting with $\sBar$ or $\sBar^0$, respectively, and let $\basis_{1}=\basis_{1}^{\text{odd}}\cup \basis_{1}^{\text{even}}$. 
\item Let $\basis_{2}$ be the set of all
admissible
words of length $n$ that starts with $\varphi^i$ or $\sBar^{i+1}$ for $i\geq
0$.
\end{enumerate}
The set $\basis_{2}$ only contains elements in degrees $0$ modulo $2p$, and the sets $\basis_{1}$ and
$\basis_{2}$ generate $B_n$ as an $\bb{F}_p$-algebra.

We can also think of  $\basis_{1}^{\text{odd}}$ and
$\basis_{1}^{\text{even}}$
as sets of elements 
 in $ E^2_{1,*}(T^\uord{n})$ of odd and even degree, respectively,
and $\basis_{2}$ as a set of elements in 
$E^2_{s,*}(T^\uord{n})$ with $s\geq
2$, and together they generate $E^2(T^\uord{n})$ as an
$\Loday(T^\uord{n}_{n-1})$-algebra. 

We will define sets $\obasis_{1}$ and
$\obasis_{2}$ of elements in $\Loday(T^\uord{n})$, with bijections $\obasis_{1}\cong \basis_1$ and 
$\obasis_2\cong \basis_2$, which  generate 
$\Loday(T^\uord{n})$ as an $\Loday(T^\uord{n}_{n-1})$-algebra. 

The $\bb{F}_p$-isomorphism $\torusIso$ is then the composite:
$$\xymatrix{
 \torusIso:\Loday(T^\uord{n})\ar[r]^-{\alpha'} \ar[r] & 
 \Loday(T^\uord{n}_{n-1})\bigotimes B_n \ar[r]^-{\id\otimes\alpha''} \ar[r]&
 \Loday(T^\uord{n}_{n-1})\bigotimes B_\uord{n}
 }$$
 where $\alpha'$ is the $\bb{F}_p$-isomorphism induced by the bijections $\obasis_{i}\cong\basis_i$ and $\alpha''$ is the 
$\bb{F}_p$-algebra isomorphism
$ B_n\cong B_\uord{n}$
given by labeling the words in $B_n$.

Let $\obasis_{1}$ be
the set of elements
$\sOper(z_{\uord{n-1}})$ in $\Loday(T^\uord{n})$, where $z_{\uord{n-1}}$ runs
over all
admissible words in $B_{\uord{n-1}}\subseteq
\Loday(T^\uord{n-1})$.  Define the bijection $\basis_1\cong \obasis_1$ by mapping 
$\sBar(z)$ or $\sBar^0(z)$, where $z$ is an admissible word in $B_{n-1}$ of odd or even degree, respectively, to $\sOper(z_{\uord{n-1}})$, where $z_{\uord{n-1}}$ is the labeled version of $z$. 

We will use induction on the degrees of elements in $\Loday(T^{\uord{n}})$ to 
define the bijection $\obasis_2\cong\basis_2$
and to prove part~\ref{thm_part:algIso} and 
\ref{thm_part:wedge_projection}.

Let \emph{the degree induction hypothesis} be the following hypothesis:
\begin{enumerate}
\item \label{induction1} In degrees less than $2pl$ we have lifted all elements in $\basis_2$ to elements in $\obasis_2$ and in degrees less than $2pl$ this lift induces an isomorphism
$\alpha$ satisfying part~\ref{thm_part:algIso} of Theorem \ref{thm:smash_over_torus}. 
 \item  \label{induction2}In degrees less than $2pl$, we have defined a homomorphism $\pr$ satisfying part 
\ref{thm_part:wedge_projection}  of Theorem \ref{thm:smash_over_torus}. 
\end{enumerate}
See Definition \ref{def:homoOfDegq} for the definition of an homomorphism in degrees less than $2pl$.
When $l=0$ there is nothing to prove. Assume we have proved it for $m$ when $m = l$.

First we proof hypothesis \ref{induction1} of the degree induction hypothesis for $m+1$. 
For each $x\in \basis_{2}$, let 
\begin{equation} \label{eq:widehat_in_proof}
\widehat{x}=\sum_{U\subseteq\uord{n}}(-1)^{n-|U|}\inc^\uord{n}_U\pr^\uord{n}_U(x)
\end{equation}
and let
\begin{equation} \label{eq:hopf_claim}
\overline{x}=\widehat{x}-\sum_{b\in
\natpos^\uord{n}}r_{b,x}\mu_\uord{n}^b 
\end{equation}
where the elements $r_{b,x}$ are given in Equation \ref{eq:def_rbx}.

Define $\obasis_2$ to be the set $\{\overline{x}|x \in \basis_2, |x|\leq2pm\}$.

To prove part~\ref{thm_part:algIso} of Theorem \ref{thm:smash_over_torus} for $m+1$, we will first show that 
$\torusIso$ is well defined in degree $2pm$ on decomposable elements. By the degree induction hypothesis, it suffices to prove that  
if $y\in \obasis_{1}\cup \obasis_{2}$ is a non-zero element of degree $2m$, then $y^p=0$. 

By Proposition~\ref{prop:TorusSFoldHopf}, $\Loday(T^{-})$ is an $\uord{n}$-fold Hopf
algebra. 
By the degree induction hypothesis and the definition of $\torusIso$,  the only monomials in $\Loday(T^\uord{n})$ 
in degrees less than $2m$ that are non-zero when raised to the
power of $p$ are the monomials in the subring
$P(\mu_\uord{n})\subseteq \Loday(T^{\uord{n}})$.
Thus, by Frobenius and part~\ref{thm_part:wedge_projection} of Theorem \ref{thm:smash_over_torus}
 we have, 
\begin{align*}
 \comult_{\uord{n}}^i(y^p) &=\comult_\uord{n}^i(y)^p =( 1\otimes y+ y\otimes 1
+ \sum y'\otimes y'')^p
  = 1\otimes y^p + y^p\otimes 1,
\end{align*}
in all $\Loday(T^{\uord{n}\setminus i})$-Hopf algebra structures. 
Hence, $y^p$ is primitive in the $\Loday(T^{\uord{n}\setminus i})$-Hopf algebra
structure for every $i\in \uord{n}$. 

Let $y^p$ be represented by $y_s\in E_{s,*}^\infty(T^\uord{n})$ modulo lower filtration. Then, since $y^p$ is
primitive in the $\Loday(T^{\uord{n}\setminus i})$-Hopf algebra
structure, $y_s$ must be primitive in 
the  $\Loday(T^{\uord{n}\setminus i})$-Hopf algebra $E^\infty(T^\uord{n})$ for every
$i\in \uord{n}$. If $y_s$ is not primitive, $\comult_{\uord{n}}^i(y^p)$ would not be equal to
$y_s\otimes 1 + 1\otimes y_s$ in filtration $s$.

The 
$\Loday(T^{\uord{n}\setminus i})$-primitive elements in $\Loday(T^\uord{n}_{n-1})\otimes
B_n$ are by the graded version of Proposition
3.12 in \cite{MilnorMoore65} linear combinations of primitive
elements in $\Loday(T^\uord{n}_{n-i})$ and $B_n$. By the graded version of Proposition
3.12 in \cite{MilnorMoore65} the module of
$\Loday(T^{\uord{n}\setminus i})$-primitive elements in
$B_n$ is
$\Loday(T^{\uord{n}\setminus i})\{x_{j}\}$, where $x_{j}$ runs over the monic words
in $B_n$. The intersection $\bigcap_{i\in
\uord{n}}\Loday(T^{\uord{n}\setminus i})\{x_j\}$ is equal to $\bb{F}_p\{x_j\}$ since
$\bigcap_{i\in
\uord{n}}\Loday(T^{\uord{n}\setminus i})=\bb{F}_p$. 
Thus, the module of elements in $B_n\subseteq E^2(T^\uord{n})$ that are
primitive in  the $\Loday(T^{\uord{n}\setminus i})$-Hopf algebra structure for every
$i\in \uord{n}$
 is $\bb{F}_p\{x_j\}\subseteq B_n$, 
which is isomorphic to the module of $\bb{F}_p$-primitive elements in $B_n$,
under the projection map $E^2(T^\uord{n})\rightarrow B_n$

The degree of $y^p$ is at least four and is zero modulo $2p$,
so by Lemma~\ref{lemma:B_n_primitive}
there are no $\bb{F}_p$-primitive elements in $B_n$ in the degree of $y^p$.

Hence, $y^p$ must be equal to an $\uord{n}$-fold primitive element in
$\Loday(T^\uord{n}_{n-1})$. 
By Corollary~\ref{corr:dimOfnFoldPrim}, the degree of 
 $y^p$ is not equal to the degree of any $\uord{n}$-fold primitive element in
$\Loday(T^\uord{n}_{n-1})$ when $n\leq p$.
Thus $y^p=0$.

We will now extend $\torusIso$ to degree $2pm$ on all elements. First we show that $\Loday(T^{-})$ is a $P_{\bb{F}_p}(\mu_{-})$ split  $S$-fold Hopf algebra in degree $q$. 
 By the torus induction hypothesis and part~\ref{thm_part:wedge_projection} in Theorem~\ref{thm:smash_over_torus},
 there is a splitting of
$\uord{n}$-fold Hopf algebras $P_{\bb{F}_p}(\mu_{-})\rightarrow
\Loday(T^{-}_{n-1})\rightarrow P_{\bb{F}_p}(\mu_{-})$. Since $\Loday(T^{-}_{n-1})$ is
the restriction, see Definition~\ref{def:restrictionHopfAlg}, of $\Loday(T^{-})$  to
the full subcategory of
$V(\uord{n})$ not containing $\uord{n}$, part
\ref{def_ass:splitSFoldHopfAlg} in Definition~\ref{def:SplitSHopf}
is thus satisfied for the $\uord{n}$-fold Hopf algebra $\Loday(T^{-})$ for $q = 2pm$. By the degree induction hypothesis and 
part~\ref{thm_part:wedge_projection} in Theorem~\ref{thm:smash_over_torus}, this map can be extended to a map 
$$\pr:\Loday(T^{\uord{n}})\rightarrow P_{\bb{F}_p}(\mu_{-})$$
in degrees less than $2pm$ such that assumption \ref{def_ass:mapHopfAlg}  in Definition~\ref{def:SplitSHopf} is satisfied. 
We can further extend this map to an $\bb{F}_p$-algebra map 
$$\pr':\Loday(T^{\uord{n}})\rightarrow P_{\bb{F}_p}(\mu_{-})$$
in degrees less than or equal to $2pm$, by mapping $\widehat{x}$, defined in Equation \ref{eq:widehat_in_proof}, for $x\in\basis_2$ in degree $2pm$ to zero.
This is well defined since $\widehat{x}$ is indecomposable. The map $\pr'$ satisfies part \ref{def_ass:extendSplitAlg} in Definition~\ref{def:SplitSHopf}.

From Equation \ref{eq:widehat_in_proof}, $\widehat{x}\in\bigcap_{i\in
\uord{n}}\ker(\counit_{\uord{n}}^i:\Loday(T^\uord{n})\rightarrow
\Loday(T^{\uord{n}\setminus i}))$, and by the definition of $\pr'$ we have $\widehat{x}\in \ker(\pr')$

For $x\in\basis_{2}$, the degree of $x$ is equal to the degree of an 
admissible word in
$B_n\subsetneq E^2(T^\uord{n})$ of even degree. 
By Corollary~\ref{corr:degOfComult}, $\widehat{x}$ is thus not
in the same degree as any of the elements $\mu_1^{p^{j_1}}\mu_2^{p^{j_2}}\cdots 
\mu_n^{p^{j_n}}$ or $(\mu_1^{p^{j_1}}\mu_2^{p^{j_2}}\cdots
\mu_n^{p^{j_n}})\mu_s^{p^{j_{n+1}}}$, where $1\leq s \leq n$ and  $j_i\in
\bb{N}$ for all $1\leq i \leq n+1$.  

Hence,
Proposition~\ref{prop:PossIteratedHopfStruc}
gives us that 
\begin{equation} \label{eq:def_rbx}
(\pr'_{P(\mu_\uord{n})}\otimes_{P(\mu_{\uord{n}\setminus
i})}\pr'_{P(\mu_\uord{n})})\circ
\psi^i_\uord{n}(\widehat{x}) = 
\sum_{b \in \natpos^\uord{n}}
r_{b,x}\mu_{\uord{n}\setminus i}^b\redcomult^i(\mu_i^{b_i})
\end{equation}
for some $r_{b,x}$ in $\bb{F}_p$.

To finish the construction of $\torusIso$ in degrees less than $2p(m+1)$ we must show that it is well defined on elements in $\obasis_1$. 
Note that if $y\in\obasis_1$ is of odd degree then $y^2=0$, since the ring is graded
commutative. This shows that $\torusIso$ is an $\bb{F}_p$ isomorphism in degree less than $2p(m+1)$. 

To show that it is a natural isomorphism of functors from $\catSpanSet{\uord{k}}$ to $\bb{F}_p$-algebras we must show that 
$\pr^{\uord{n}}_V(y)=0$ for all $y\in \obasis_1\cup\obasis_2$. In Equation \ref{eq:hopf_claim} we only sum over sequences of positive
integers so $\pr^{\uord{n}}_V(\overline{x})=\pr^{\uord{n}}_V(\widehat{x})=0$
for all $V\subsetneq \uord{n}$ proving naturality for for the set $\obasis_{2}$. 
Naturality for the elements in $\obasis_1$ follows from Lemma \ref{lemma:pr_sigma_null}.

Now we proof hypothesis \ref{induction2} of the degree induction hypothesis for $m+1$. 
To show that $\pr_{P(\mu_\uord{n})}$ is an Hopf algebra morphism for all $i \in \uord{n}$ we must show that
\begin{equation} \label{eq:proof_pr_zero}
(\pr_{P(\mu_\uord{n})}\otimes_{P(\mu_{\uord{n}\setminus
i})}\pr_{P(\mu_\uord{n})})\circ
\psi^i_\uord{n}(x)=0 
\end{equation}
 for $x$ in $\obasis_1$ or $\obasis_2$. 
By Equation \ref{eq:hopf_claim} and \ref{eq:def_rbx} this holds for $x$ in $\obasis_2$.

Each element in $\obasis_1$ is of the form $\sOper(z)$ for some admissible word $z$ in $B_{\uord{n-1}}$.
By Proposition~\ref{prop:sOperDeriv}, $\sOper:\pi_*(
\loday_{T^\uord{n-1}}\HF)\rightarrow
\pi_*(\loday_{T^\uord{n}}\HF)$ is a derivation. From the commutative diagram \ref{eq:s_derivation}, 
if $\comult_\uord{n-1}^i(z)= 1\otimes z + z\otimes 1 + \sum z_i'\otimes z_i''$
for
$i\in \uord{n-1}$,
then 
$$\comult_\uord{n}^i(\sOper(z)) =  1\otimes 
\sOper(z) + \sOper(z)\otimes 1 + \sum
\sOper(z_i')\otimes z_i'' \pm z_i'\otimes \sOper(z_i'').$$

By part \ref{induction2} of the degree induction hypothesis and since $\sOper$ is a derivation,
$\sOper(z_i')$ and $\sOper(z_i'')$ are in the kernel of $\pr_{P(\mu_\uord{n})}$, and thus Equation \ref{eq:proof_pr_zero} is satisfied for $i<n$. 
Proposition~\ref{prop:sOperPrim} shows that $\sOper(z)$ is
primitive as an element in the $\Loday(T^{\uord{n-1}})$-Hopf algebra
$\Loday(T^\uord{n})$ proving it for $i=n$. 

\emph{Proof of part~\ref{thm_part:sigmaElements}:}

This follows from the definition of $\obasis_1$ and $\torusIso$.

\emph{Proof of part~\ref{thm_part:sphere_projection}:}
It suffices to prove it for the elements in $\obasis_1$ and $\obasis_2$ since they 
generate $\Loday(T^\uord{n})$ as an $\Loday(T^{\uord{n}}_{n-1})$-algebra. 

Let $X$ be the ideal generated by the non-units in $\Loday(T^\uord{n}_{n-1})$.
By Lemma \ref{lemma:pr_is_null}, $x$ and $\widehat{x}$ in Equation \ref{eq:widehat_in_proof} are equal modulo $X$. 
In Equation \ref{eq:hopf_claim} $\widehat{x}$ and $\overline{x}$ are equal modulo $X$. 
From this it follows that, modulo the ideal $X$, $\torusIso$ maps $\overline{x}$ to the labeled version of $x$ in $\B_{\uord{n}}$, 
proving part~\ref{thm_part:sphere_projection} for elements in $\obasis_2$.

By Proposition~\ref{prop:sigma_in_SS} and the commutative diagram 
$$\xymatrix{S^1_+\wedge \loday_{T^\uord{n-1}}\HF
\ar[r]^-{\sCircleInc} \ar[d]^-{S^1_+\wedge
\sphereproj^{\uord{n-1}}} &
\loday_{T^\uord{n}}\HF \ar[d]^-{\sphereproj^{\uord{n}}} \\
S^1_+\wedge \loday_{s^{n-1}}\HF \ar[r]^-{\sSmashInc} &
\loday_{S^n}\HF,}$$
the element $\sphereproj^\uord{n}(\sOper(z_{\uord{n-1}}))$ is  equal to
$\sBar\sphereproj^\uord{n-1}(z_{\uord{n-1}})= \sBar z$, where $z$ is the unlabeled version of $z_{\uord{n-1}}$. This proves
part~\ref{thm_part:sphere_projection} for elements in $\obasis_1$.

\end{proof}

\begin{lemma} \label{lemma:pr_is_null}
Given $x\in \Loday(T^\uord{n})$, let
\begin{equation*}
\widehat{x}=\sum_{U\subseteq\uord{n}}(-1)^{n-|U|}\inc^\uord{n}_U\pr^\uord{n}_U(x). 
\end{equation*}
For every $V\subsetneq \uord{n}$ 
$$\pr^\uord{n}_V(\widehat{x})=0$$
and $\widehat{x}=x$ in $\Loday(T^\uord{n})$ modulo the ideal
generated by the
non-units in $\Loday(T^\uord{n}_{n-1})$
\end{lemma}
\begin{proof}
For every $U\subsetneq \uord{n}$ we have
$\inc^\uord{n}_U\pr_U^\uord{n}(x)\in \Loday(T^\uord{n}_{n-1})$, so
$\widehat{x}$ is equal to $x$ in $\Loday(T^\uord{n})$ modulo the ideal
generated by the
non-units in $\Loday(T^\uord{n}_{n-1})$. 
The diagram
$$\xymatrix{
T^\uord{n} \ar[r]^-{\pr^\uord{n}_U} \ar[d]^-{\pr^\uord{n}_{U\cap V}} &
T^U \ar[r]^-{\inc^\uord{n}_U} &
T^\uord{n} \ar[d]^-{\pr^\uord{n}_{V}} \\
T^{U\cap V} \ar[rr]^-{\inc^V_{U\cap V}} & &
T^V}$$ commutes. Hence, if $V\subsetneq\uord{n}$, then 
\begin{align*}
 \pr^\uord{n}_V(\widehat{x}) &=\sum_{U\subseteq
\uord{n}}(-1)^{n-|U|}\pr^\uord{n}_V\inc^\uord{n}_U\pr^\uord{n}_U(x)
=
\sum_{S\subseteq V}\sum_{W\subseteq
\uord{n}\setminus V}(-1)^{n-|S|-|W|}\inc_S^V\pr^\uord{n}_{S} (x)
= 0,
\end{align*}
since $$\sum_{W\subseteq\uord{n}\setminus V}(-1)^{n-|S|-|W|}
= \sum_{i=0}^{n-|V|}(-1)^{n-|S|-i}\binom{n-|V|}{i} =0.$$
\end{proof}

\begin{lemma} \label{lemma:pr_sigma_null}
 Let  $x$ be an element in $\bigcap_{i\in
\uord{n-1}}\ker(\counit_{\uord{n-1}}^i:\Loday(T^\uord{n-1})\rightarrow
\Loday(T^{\uord{n-1}\setminus i}))$. Then 
$\sigma(x)$ is an element in $\bigcap_{i\in
\uord{n}}\ker(\counit_{\uord{n}}^i:\Loday(T^\uord{n})\rightarrow
\Loday(T^{\uord{n}\setminus i}))$
\end{lemma}
\begin{proof}
 Observe that  the diagrams
\begin{align*}\xymatrix{S^1_+\wedge \loday_{T^\uord{n-1}}\HF
\ar[r]^-{\sCircleInc} \ar[d]^-{S^1_+\wedge \pr^{\uord{n}-1}_{\uord{n-1}\setminus i}} &
\loday_{T^\uord{n}}\HF \ar[d]^-{\pr^\uord{n}_{\uord{n}\setminus  i}} \\
S^1_+\wedge \loday_{T^{\uord{n-1}\setminus i}}\HF \ar[r]^-{\sCircleInc} &
\loday_{T^{\uord{n}\setminus i}}\HF} & 
&&
\xymatrix{S^1_+\wedge \loday_{T^\uord{n-1}}\HF 
\ar[r]^-{\sCircleInc} \ar[d]^-{\pr_+\wedge \id} &
\loday_{T^\uord{n}}\HF \ar[d]^-{\pr^\uord{n}_{\uord{n-1}}} \\
S^0\wedge \loday_{T^\uord{n-1}}\HF \ar[r]^-{\cong}  &
\loday_{T^\uord{n-1}}\HF,}
\end{align*}
commute for all $i\in \uord{n-1}$.
From the left diagram, we conclude that
$\pr^\uord{n}_{\uord{n}\setminus i}(\sOper(x))$ is zero when $i\not=n$.
From the right diagram we conclude that $\pr^\uord{n}_{\uord{n-1}}(\sOper(z))$
is zero, since $H_1(S^0)=0$. 
\end{proof}

\section{Periodic Elements} \label{sec:perElm}

The connective  $m$-th Morava $K$-theory $k(m)$ is a ring spectrum with
coefficient ring
$k(m)_*=P_{\bb{F}_p}(v_m)$ where $|v_m|=2p^m-2$. The unit map of the ring
spectrum $\loday_{T^\uord{n}}\HF$ induces a
homomorphism $P_{\bb{F}_p}(v_{m})\rightarrow k(m)_*(\loday_{T^\uord{n}}\HF)$ and
we denote the image of $v_m$ with $v_m$.
In this section we show that the \emph{the Rognes element} $t_1\mu_1^{p^{n-1}}+t_2\mu_2^{p^{n-1}}+\ldots
+t_{n}\mu_{n}^{p^{n-1}}$, where $\mu_i\in L_2(T^\uord{n})$ is the image of the generator in $\Loday(S^1)$ under the inclusion of the $i$-th circle, in the homotopy fixed points spectral sequence
calculating $k(n-1)_*(F(E_2T^{n}_+,\loday_{T^\uord{n}}\HF)^{T^{n}})$ is not
hit by any differential, and that this implies that $v_{n-1}\in
k(n-1)_*((\loday_{T^\uord{n}}\HF)^{hT^{n}})$ is non-zero. 

Given a prime $p$, let $A_*$ be the dual Steenrod algebra, see \cite{Milnor58}
for details. When $p$ is odd $A_* = P(\oxi_1,\oxi_2,\ldots)\otimes
E(\otau_0,\otau_1,\ldots)$ where $|\oxi_i|=2p^i-2$ and $|\otau_i|=2p^i-1$, 
and when $p=2$,  $A_* = P(\oxi_1,\oxi_2,\ldots)$ where $|\oxi_i|=2^i-1$.

See \cite{JohnsonWilson75} for the following details about Morava $K$-theory. 
We have $H_*(k(n))
= \morava{n}_*$, where $\morava{n}_*\cong P(\oxi_1,\oxi_2,\ldots)\otimes
E(\otau_0,\ldots,\otau_{n-1},\otau_{n+1},\ldots )$ is isomorphic to the dual Steenrod algebra $A_*$,
without the generator $\otau_n$. 
Multiplication by
$v_n$ yields a cofiber sequence 
$$\Sigma^{2p^n-2}k(n)\rightarrow k(n)\rightarrow \HF$$
which in homology decomposes into short exact sequences 
$$0\rightarrow \morava{n}_* \rightarrow A_* \rightarrow
\Sigma^{2p^n-1}\morava{n}_*\rightarrow 0.$$
Since $\loday_{T^\uord{n}}\HF$ is an $\HF$-module spectrum, we have 
$k(m)_*(\loday_{T^\uord{n}}\HF)\cong k(m)_*(\HF)\otimes \Loday(T^\uord{n})$ as graded $k(m)_
*$-algebras.

Let $X$ be a $T^n$-spectrum. The homotopy fixed points spectrum of $X$ is defined as the mapping spectrum
$$X^{hT^n} = F(ET^n_+,X)^{T^n},$$ of $T^n$-equivariant based maps from $ET^n_+$ to $X$. 
Here $ET^n_+$ should be interpreted as the unreduced suspension spectrum of $ET^n$, the free contractible $T^n$-space. 

Now we want to construct the first two columns of the homotopy fixed points spectral sequence 
for the group $T^n$.
We use the setup in \cite{BrunerRognes05}.
Let the unit sphere $S(\bb{C}^\infty)$ be our model for $ES^1$ with the
$S^1$-action given by the coordinatewise action. The space $S(\bb{C}^\infty)$
is equipped with a free $S^1$-CW structure with one free $S^1$-cell in each
even degree. We use the product $S(\bb{C}^\infty)^n$ as a model for $ET^n$ with the product
$T^n$-CW structure. 

We now get a $T^n$-equivariant cofiber sequences 
$$E_{0}T^n\rightarrow E_{2}T^n\rightarrow T^n_+\wedge (\vee
S^{2})$$
where the wedge sum runs over all $2$-cells in $ET^n$. Here $T^n$ acts
trivially on the space $(\vee S^{2})$.

\begin{prop} \label{prop:homFixSS}
Let $X$ be a bounded below $T^n$-spectrum with finite homotopy groups in each
degree. Let $M$ be any homology theory. 
There is a strongly
convergent spectral sequence 
\begin{align*}
E^2_{s,t}\cong \mathbb{Z}\{1,t_1,\ldots, t_n\}\otimes M_t(X) &\Rightarrow
M_{s+t}(F(E_2T^n_+,X)^{T^n}).
\end{align*}

\end{prop}
\begin{proof}
 Proof is left to the reader. There are no convergence issue, since the spectral sequence is concentrated in two columns. 
\end{proof}

This spectral sequence is just a long exact sequence in disguise, but since the calculations are relevant for calculations in the homotopy fixed point spectral sequence, 
we have chosen to use the language of spectral sequences. 

When $X=\loday_{T^\uord{n}}\HF$ we will write $E^*(M,n)$ for the spectral sequence

$$E^2(M,n)=M_*(\loday_{T^\uord{n}}\HF)\{1,t_1,\ldots t_n\}\Rightarrow
M_*(F(E_2T^n_+,\loday_{T^\uord{n}}\HF)^{T^n})$$
when $M$ is $\HF$ or $k(m)$.

\begin{prop} \label{prop:d^2_is_enough}
Assume $x$ in $E^2_{-2,2p^{n-1}}(k(n-1),n)$ survives to $E^3_{-2,2p^{n-1}}(k(n-1),n)$.
If $d^2(\otau_{n-1})=h(x)$ in $E^2(\HF,n)$, where $h$ is the Hurewicz homomorphism, then $[x]=uv_{n-1}$, where $[x]\in k(n-1)_*(F(E_2T^n_+,\loday_{T^\uord{n}}\HF)^{T^n})$ is the class of $x$, for some unit $u$.
\end{prop}
\begin{proof}
Smashing the cofiber sequences
$$\xymatrix{\Sigma^qk(n-1)\ar[r]^-{v_{n-1}} &
k(n-1)\ar[r] & H\bb{F}_p \ar[r] &\Sigma^{q+1}k(n-1)}
$$
and
$$
\xymatrix{X_2\ar[r] &F(E_2T^n_+,\loday_{T^{\uord{n}}}H\bb{F}_p)^{T^n} \ar[r] &
X_0 \ar[r]^-{\partial} & \Sigma X_2,}$$
where $X_i = F(E_iT^n_+/E_{i-2}T^n_+,\loday_{T^{\uord{n}}}H\bb{F}_p)^{T^n}$, and taking homotopy groups, 
one obtains a diagram with exact rows and columns, where the connecting homomorphism $\partial$
induces the $d^2$ differential of the two-column spectral sequences. Since $x$ survives to $E^3_{-2,2p^{n-1}}(k(n-1),n)$, 
$[x]$ is non-zero in $k(n)_{2p^{n-1}-2}(F(E_2T^{n}_+,\loday_{T^\uord{n}}\HF)^{T^n})$.
Bu a diagram chase it is seen that
$[x]$ generates the kernel of
$$k(n)_{2p^{n-1}-2}(F(E_2T^{n}_+,\loday_{T^\uord{n}}\HF)^{T^n})\rightarrow
H_{2p^{n-1}-2}(F(E_2T^{n}_+,\loday_{T^\uord{n}}\HF)^{T^{n}}),$$ 
and that multiplication by $v_{n-1}$ is zero on $k(n-1)_*X_i$, from which the proposition follows.

\end{proof}

\begin{defn}
Let $M$ be a homology theory. 
The homomorphism 
$$(\sCircleInc_{T^\uord{n}})_*:M_*(T^\uord{n}_+\wedge
\loday_{T^\uord{n}}\HF)\rightarrow M_*(
\loday_{T^\uord{n}\times T^\uord{n}}\HF)$$ in Definition
\ref{def:circleInLodayMap},the fact that $T^n_+$ splits completely
 and the multiplication map in the group
$T^\uord{n}$, induces a homomorphism
$$H_*(T^\uord{n})\otimes M_*(\loday_{T^\uord{n}}\HF)\rightarrow
M_*(\loday_{T^\uord{n}}\HF).$$
Given $j\in \uord{n}$ and $x\in M_*(\loday_{T^\uord{n}}\HF)$ we write 
 $\sOper_j(x)$ for the image of $[S^1_j]\otimes x$ under this map, where
$[S^1_j]\in H_*(T^\uord{n})$ is the image of a fundamental class $[S^1]\in
H_*(S^1)$ under the
inclusion of the $j$-th circle.
\end{defn}

Here, $H_*$ is homology with $\bb{F}_p$ coefficients. 
Since $T^\uord{n}$ is a pointed space $\loday_{T^\uord{n}}\HF$ is an $\HF$-algebra and thus a generalized Eilenberg
Mac~Lane spectrum.
Let $p\geq 5$ and $1\leq n \leq p$ or $p=3$ and $1\leq n \leq 2$, and define $\polIdeal$ to be the kernel of the projection
homomorphism $$H_*(\loday_{T^\uord{n}}\HF)\cong A_*\otimes
\Loday(T^\uord{n})\cong A_*\otimes \bigotimes_{U\subseteq
\uord{n}}B_U\rightarrow \bigotimes_{i\in \uord{n}}B_{\{i\}},$$
where the isomorphism $\alpha:\Loday(T^\uord{n})\cong \bigotimes_{U\subseteq
\uord{n}}B_U$ is the one in Theorem \ref{thm:smash_over_torus}.

\begin{prop} \label{prop:sOperNotPol} Let $p\geq 5$ and $1\leq n \leq p$ or $p=3$ and $1\leq n \leq 2$.
 If $x$ is in the subring $P(\oxi_1,\oxi_2,\ldots )\otimes
\Loday(T^\uord{n})\subseteq A_*\otimes \Loday(T^\uord{n})\cong
H_*(\loday_{T^\uord{n}}\HF)$, then for every $j\in \uord{n}$ the element
$\sOper_j(x)$ is in $\polIdeal$.
\end{prop}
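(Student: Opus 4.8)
The goal is to prove $\sOper_j(x)\in\polIdeal=\ker\pr$, i.e. $\pr(\sOper_j(x))=0$, for every $x$ in the subalgebra $R'=P(\oxi_1,\oxi_2,\dots)\otimes\Loday(T^\uord{n})$. The plan is to use that $\pr$ is a map of $\bb{F}_p$-algebras while $\sOper_j$ is a graded derivation: this last fact is the homology incarnation of Proposition \ref{prop:sOperDeriv}, since $\sOper_j$ is obtained from the $j$-th circle by inserting its fundamental class through $\sCircleInc$ and folding back along the multiplication of $T^\uord{n}$. Hence $D:=\pr\circ\sOper_j$ obeys $D(ab)=D(a)\pr(b)\pm\pr(a)D(b)$, so it suffices to verify $D=0$ on a set of $\bb{F}_p$-algebra generators of $R'$. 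Such generators are the $\oxi_m$, the singleton classes $\mu_i$, the odd monic words generating the exterior factors of the $B_U$, and the divided powers $\gamma_{p^k}$ of the even monic words. Because $\pr$ lands in $\bigotimes_{i\in\uord{n}}B_{\{i\}}=P(\mu_1,\dots,\mu_n)$, which is concentrated in even degrees, and $\sOper_j$ raises degree by one, every \emph{even} generator $g$ has $\pr(\sOper_j(g))$ lying in an odd degree of $P(\mu_1,\dots,\mu_n)$, hence $=0$. In particular all $\oxi_m$ are killed by parity, and the statement collapses to the single family of \emph{odd monic words}.

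For these I would use the circle-action structure. The operators $\sOper_1,\dots,\sOper_n$ are the operations by the one-dimensional generators of $H_*(T^\uord{n})$ for the translation action of $T^\uord{n}$ on $\loday_{T^\uord{n}}\HF$; as this makes $H_*(\loday_{T^\uord{n}}\HF)$ a module over the exterior algebra $H_*(T^\uord{n})$, one gets $\sOper_j^2=0$ and $\sOper_i\sOper_j=-\sOper_j\sOper_i$. Moreover, on classes whose $j$-th coordinate is trivial the folding multiplication degenerates to the identity, so there $\sOper_j$ coincides with the circle-adjoining operator $\sOper$ of Theorem \ref{thm:smash_over_torus}, part \ref{thm_part:sigmaElements}; I will use the evident symmetric form of that part, valid for any circle (not just the maximal one) by permuting coordinates, since $\pr$ is symmetric.

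Now fix an odd monic word $z_U\in B_U$; then $|U|\ge 2$, and by Lemma \ref{lemma:AdmissWord}, part \ref{lemma_part:oddAdmWord}, $z_U$ begins with $\sBar$, whose label is $v:=\max U$. Writing $z_U=\sBar_v w$ with $w$ the even admissible $(U\setminus v)$-labelled word obtained by deleting the leading letter, Theorem \ref{thm:smash_over_torus}, part \ref{thm_part:sigmaElements}, gives $z_U=\sOper_v(w)$. I then split on the position of $j$. If $j\notin U$, then $z_U$ has trivial $j$-th coordinate, so $\sOper_j(z_U)$ is the circle-adjoining image of $z_U$ and lies in $\ker\pr$ by part \ref{thm_part:sigmaElements}. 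If $j=v$, then $\sOper_j(z_U)=\sOper_v\sOper_v(w)=\sOper_v^2(w)=0$. Finally, if $j\in U$ with $j<v$, anticommutativity gives $\sOper_j(z_U)=\sOper_j\sOper_v(w)=-\sOper_v\bigl(\sOper_j(w)\bigr)$; since $w$ has trivial $v$-th coordinate, so does $\sOper_j(w)$, whence $\sOper_v(\sOper_j(w))$ is again a circle-adjoining image and lies in $\ker\pr$ by part \ref{thm_part:sigmaElements}. In every case $\pr(\sOper_j(z_U))=0$, which finishes the reduction and hence the proof.

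The routine part is the parity bookkeeping of the first paragraph. The real work is pinning down the three operator identities: that $\sOper_j$ is a derivation, that it agrees with the circle-adjoining $\sOper$ of part \ref{thm_part:sigmaElements} on classes supported away from the $j$-th coordinate, and that the family $\{\sOper_i\}$ satisfies $\sOper_j^2=0$ and $\sOper_i\sOper_j=-\sOper_j\sOper_i$; all three come from recognizing $\sOper_j$ as the action of the $j$-th fundamental class for the $T^\uord{n}$-translation action, together with the degeneration of the folding map. The genuinely delicate case is the non-maximal one $j<v$, which is the only place that needs both anticommutativity and the symmetric (non-maximal-circle) form of part \ref{thm_part:sigmaElements}; I expect this to be the main obstacle to a fully rigorous write-up.
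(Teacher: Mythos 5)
Your proposal follows essentially the same route as the paper: reduce to $\bb{F}_p$-algebra generators via the derivation property of $\sOper_j$, kill all even-degree generators by parity against the evenly-graded target $\bigotimes_{i\in\uord{n}}B_{\{i\}}$, and for an odd monic word write $z_U=\sOper_v(w)$ with $v=\max U$ and run a case analysis using $\sOper_v^2=0$, graded commutativity of the $H_*(T^\uord{n})$-action, and part \ref{thm_part:sigmaElements} of Theorem \ref{thm:smash_over_torus}. One genuine (if minor) improvement: you dispose of the generators $\oxi_m$ by parity alone, whereas the paper proves the stronger statement $\sOper_j(\oxi_i)=0$ via the B\"okstedt spectral sequence and Proposition 4.9 of \cite{AngeltveitRognes05}; for this proposition your shortcut is perfectly valid, since $|\oxi_m|$ is even.

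The gap is exactly where you suspected, but it is worse than a write-up obstacle: your case $j\notin U$ with $j<v=\max U$ rests on a ``symmetric form'' of part \ref{thm_part:sigmaElements} for non-maximal circles, and that form is not available. The isomorphism $\torusIso$ (hence $\pr$ and $\polIdeal$) is constructed in the proof of Theorem \ref{thm:smash_over_torus} by an induction that privileges the largest circle, making non-canonical choices ($\obasis_{1,1}$, $\obasis_{2,\infty}$, $\delta$) along the way; the stated naturality is only over $\catSpanSet{\uord{k}}$, whose morphisms are intersections, not permutations, so ``$\pr$ is symmetric'' is never established. Worse, the symmetric form is essentially an instance of the very proposition you are proving (take $x\in \Loday(T^{\uord{n}\setminus j})$), so invoking it is circular. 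Fortunately the repair is already in your own third case: for any $j<v$, whether or not $j\in U$, anticommutativity gives $\sOper_j(z_U)=\sOper_j\sOper_v(w)=-\sOper_v(\sOper_j(w))$, and $\sOper_j(w)\in \Loday(T^{(U\setminus v)\cup\{j\}})\subseteq \Loday(T^{\uord{v-1}})$ since every label involved is strictly less than $v$; now $v$ \emph{is} the maximal circle of $\uord{v}$, so part \ref{thm_part:sigmaElements} applies verbatim and compatibility of the projections (part \ref{thm_part:torus_projection}) puts the result in $\polIdeal$. This trichotomy $j>v$, $j=v$, $j<v$ is precisely how the paper organizes the cases, and it never needs any permutation symmetry.
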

\begin{proof}
Since $\sOper_j$ is a derivation by Proposition \ref{prop:sOperDeriv}, it
suffices to check the claim for the set of $\bb{F}_p$-algebra
generators in $P(\oxi_1,\oxi_2,\ldots )\otimes
\Loday(T^\uord{n})\cong P(\oxi_1,\oxi_2,\ldots )\otimes \bigotimes_{U\subseteq \uord{n}}
B_U $ consisting of $\oxi_i$ for $i\geq 1$ together with all
$U$-labeled admissible words, where $U\subseteq \uord{n}$.

By Proposition 4.9 in \cite{AngeltveitRognes05}, the element $\sOper(\oxi_i)$ for
$i\geq1$ is represented by $\sBok \oxi_i$ in filtration $1$ in the  B\"okstedt
spectral sequence calculating $H_*(\loday_{S^1}\HF)$. By the calculation of B\"okstedt in \cite{Bokstedt86}, 
see Theorem 1 in \cite{Hunter96} for a published account,
the element $\sBok\oxi_i$ is a boundary in the B\"okstedt spectral sequence, and hence $\sOper(\oxi_i)\in A_*$. It must thus be equal
to zero  since it is the image of $[S^1]\otimes \oxi_i$, and $[S^1]$ is mapped
to zero on the left hand side in the commutative diagram
$$\xymatrix{
H_*(S^1_+)\otimes H_*(\HF) \ar[r]^-{\sCircleInc_*} \ar[d]^{\pr\otimes \id}  &
H_*(\loday_{S^1}\HF) \ar[d]^{\pr\otimes \id} \\
H_*(S^0)\otimes H_*(\HF) \ar[r]^-{\sCircleInc_*} &
H_*(\loday_{\point}\HF).}$$
Hence $\sOper_j(\oxi_i)=0 $ for all $i \geq 1$
and $1\leq j \leq n$.

We prove the proposition by induction on the degree $m$ of an element $x$ in
$\Loday(T^\uord{n})\cong \bigotimes_{U\subset \uord{n}}B_U$.  
When $m=0$, there is nothing to check since $\sOper_j$ is trivial on units. 

Assume the proposition holds for all elements in degrees less than $m$. 
If $m$ is even, the proposition holds because $\sOper_j(x)$  is then  of odd
degree, and $\bigotimes_{i\in \uord{n}}B_{\{i\}}$ is concentrated in even
degrees.
Assume $m$ is odd, and that $x$ is a $U$-labeled admissible word of degree $m$
for some $U\subseteq \uord{n}$.
By Proposition \ref{lemma:AdmissWord} part \ref{lemma_part:oddAdmWord}, $x$ is thus equal to $\sBar_k y$, where
$k$ is the largest element in $U$ and $y$ is a $U\setminus k$-labeled
admissible word of even degree. By part \ref{thm_part:sigmaElements}
of Theorem
\ref{thm:smash_over_torus}, $x$ is  equal to
$\sOper_k(y)$ where we think of $y$ as being an element in $B_{U\setminus
k}\subseteq \Loday(T^{U\setminus k})\subseteq \Loday(T^{\uord{k-1}})$.

If $j>k$, the element  $\sOper_j(x)=\sOper_j(\sOper_k(y))$ is in
$\polIdeal$ by part
\ref{thm_part:sigmaElements} of Theorem
\ref{thm:smash_over_torus}.

The element $\sOper_j(\sOper_k(y))$ is equal to the image of $[S^1_j]\cdot
[S^1_k]\otimes y$, where $[S^1_j]\cdot
[S^1_k]$ is the product in $H_*(T^\uord{n})$. 
When $k=j$,  $\sOper_j(\sOper_k(y))$ is thus zero since $[S_j^1]^2=0$. 

When $j<k$, we have $\sOper_j(\sOper_k(y)) = \pm\sOper_k(\sOper_j(y))$ since the
ring $H_*(T^\uord{n})$ is graded commutative. 
Now, $\sOper_j(y)$ is in $\Loday(T^\uord{k-1})$, so by part
\ref{thm_part:sigmaElements} in Theorem
\ref{thm:smash_over_torus}, the element $\sOper_k(\sOper_j(y))$ is in
$\polIdeal$. 
Hence, $\sOper_j(\sOper_k(y))$ is in $\polIdeal$.
\end{proof}

\begin{prop} \label{prop:no_diff_hit_mu}
The differential in $E^2(\HF,n)$ is given by
$$d^2(x) = t_1\sOper_1(x)+\ldots + t_{n} \sOper_n(x),$$ for $x\in
E^2_{0,*}(\HF,n)$. 

Thus, if  $p\geq 5$ and $1\leq n \leq p$ or $p=3$ and $1\leq n \leq 2$, and $x$ is in the subring $P(\oxi_1,\oxi_2,\ldots )\otimes
\Loday(T^\uord{n})\subseteq H_*(\loday_{T^\uord{n}}\HF)\cong E^2_{0,*}(\HF,n)$,
then
  $d^2(x)$ is in $\polIdeal\{t_1,\ldots, t_n\}$.
\end{prop}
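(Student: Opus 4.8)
The plan is to prove the differential formula first and then read off the stated consequence from Proposition~\ref{prop:sOperNotPol}. For the formula I would analyze the connecting map of the cofiber sequence $E_0T^n_+\to E_2T^n_+\to T^n_+\wedge(\bigvee_{i=1}^n S^2)$ coming from the first stage of the skeleton filtration of $ET^n$, whose $n$ two-cells are indexed by the circle directions $i\in\uord{n}$. Applying $F(-,\loday_{T^\uord{n}}\HF)^{T^n}$ and using that $T^n$ acts freely on $T^n_+\wedge(\bigvee_iS^2)$ and on $E_0T^n_+=T^n_+$, the outer two terms become $\bigvee_i\Sigma^{-2}\loday_{T^\uord{n}}\HF$ and $\loday_{T^\uord{n}}\HF$ respectively. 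The $d^2$-differential is then the map induced on $M_*$ by the connecting map $\partial\colon\loday_{T^\uord{n}}\HF\to\Sigma\bigvee_i\Sigma^{-2}\loday_{T^\uord{n}}\HF\simeq\prod_i\Sigma^{-1}\loday_{T^\uord{n}}\HF$, with the $i$-th component supplying the coefficient of $t_i$; the bidegrees match since $t_i$ has bidegree $(-2,0)$ and $\sOper_i$ raises internal degree by one.

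The heart of the argument is identifying the $i$-th component $\partial_i$ with the operator $\sOper_i$. Since $ET^n=(ES^1)^n$ carries the product CW structure and the $i$-th two-cell is attached along the $T^n$-equivariant extension of the inclusion $S^1\to T^n$ of the $i$-th circle, $\partial_i$ only involves the $i$-th circle factor, so I would reduce to the single-circle case using this product decomposition and naturality. For one circle, the connecting map of $S^1_+=E_0S^1_+\to E_2S^1_+\to S^1_+\wedge S^2$, after the free-action identifications, is exactly the map induced by the $S^1$-action used to define the circle operator. Matching this against the Definition of $\sOper_j$ preceding the proposition---built from $(\sCircleInc_{T^\uord{n}})_*$, the stable splitting $T^\uord{n}_+\simeq T^\uord{n}\vee S^0$, the group multiplication of $T^\uord{n}$, and evaluated on $[S^1_j]\otimes x$---shows that $\partial_i$ induces $\sOper_i$ (up to a unit, which is harmless since differentials are only defined up to a unit). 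Summing over the $n$ two-cells yields $d^2(x)=t_1\sOper_1(x)+\ldots+t_n\sOper_n(x)$ for $x\in E^2_{0,*}(\HF,n)$.

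I expect the main obstacle to be precisely this identification: carrying the attaching data of the $i$-th two-cell through $F(-,\loday_{T^\uord{n}}\HF)^{T^n}$ and matching the resulting connecting map with the composite of $\sCircleInc_{T^\uord{n}}$, the stable splitting, and the group multiplication that defines $\sOper_i$. The product structure of $ET^n$, which localizes each two-cell to a single circle factor, together with the standard fact in the setup of \cite{BrunerRognes05} that the first differential of the $S^1$-homotopy fixed point spectral sequence is the circle operator, is what keeps this step under control.

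The asserted consequence is then immediate. If $x$ lies in the subring $P(\oxi_1,\oxi_2,\ldots)\otimes\Loday(T^\uord{n})$, then Proposition~\ref{prop:sOperNotPol} gives $\sOper_j(x)\in\polIdeal$ for every $j\in\uord{n}$, so each summand $t_j\sOper_j(x)$ lies in $\polIdeal\{t_1,\ldots,t_n\}$, and therefore so does $d^2(x)=t_1\sOper_1(x)+\ldots+t_n\sOper_n(x)$.
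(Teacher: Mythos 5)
Your proposal is correct, but it reaches the differential formula by a different mechanism than the paper. You work directly with the geometry of the filtration: you decompose the connecting map of $E_0T^n_+\to E_2T^n_+\to T^n_+\wedge(\bigvee_i S^2)$ according to the $n$ two-cells of the product CW structure on $ET^n$, observe that the $i$-th cell is attached via the $T^n$-equivariant extension of the $i$-th circle inclusion, and identify the $i$-th component of the connecting map with $\sOper_i$ by reduction to the single-circle case. The paper instead never touches the cell-level attaching data: it maps the full $T^n$-homotopy fixed point spectral sequence $P(t_1,\ldots,t_n)\otimes H_*(\loday_{T^\uord{n}}\HF)$ onto $E^2(\HF,n)$, and for each $i$ uses inclusion of fixed points along the $i$-th circle $S^1\subseteq T^n$ to project onto the $S^1$-homotopy fixed point spectral sequence $P(t_i)\otimes H_*(\loday_{T^\uord{n}}\HF)$, which injects into the Tate spectral sequence; Lemma 1.4.2 of \cite{Hesselholt96} then gives $d^2=t_i\sOper_i$ there, and since a class in filtration $-2$ is determined by its $t_i$-components, the formula $d^2(x)=\sum_i t_i\sOper_i(x)$ follows. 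Both arguments ultimately rest on the same single-circle input (first differential equals the circle operator), which you attribute to the \cite{BrunerRognes05} setup and the paper extracts from Hesselholt via the Tate spectral sequence; your route is more self-contained and geometric but carries the bookkeeping burden you flag (pushing attaching data through $F(-,X)^{T^n}$), while the paper's route outsources that to standard spectral sequence comparisons. Your deduction of the second claim from Proposition \ref{prop:sOperNotPol} is exactly the paper's.
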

\begin{proof}
For each $i \in \uord{n}$, inclusion of fixed points induces the projection homomorphism
from $E^2(\HF,n)$ to
$$E^2\langle i \rangle=H_*(\loday_{T^\uord{n}}\HF)\{1,t_i\}
\Rightarrow H_{*}(F(E_2S^1_+,\loday_{T^\uord{n}}\HF)^{S^1})$$
where $S^1$ acts on the $i$-th circle in $T^n$.
Now $E^2\langle i \rangle$ maps injectively to the Tate spectral sequence, so by Lemma 1.4.2 in 
\cite{Hesselholt96}, the $d^2$-differential in $E^2\langle i \rangle$ is induced by the operator
$\sOper_i$.

 Since $E^2_{2,*}(\HF,n)\cong H_*(\loday_{T^\uord{n}}\HF)\{t_1,t_2,\ldots,t_n\}$, and $E^2_{0,*}\langle i \rangle \cong E^2_{0,*}(\HF,n)$, 
 the formula for the differential in $E^2_{0,*}(\HF,n)$ is thus
$$d^2(x) = t_1\sOper_1(x)+\ldots + t_{n} \sOper_n(x),$$
and the second claim now follows by Proposition \ref{prop:sOperNotPol}.
\end{proof}

We will show that the Rognes element $t_1\mu_1^{p^{n-1}}+t_2\mu_2^{p^{n-1}}+\ldots
+t_{n}\mu_{n}^{p^{n-1}}$ in $E^2(\HF,n)$ is not hit by any differential in the homotopy fixed
points spectral sequence. The idea of the proof  is that by the previous
propositions only differentials on $\otau_i$ can hit an element in $P(\mu_1,\ldots
\mu_{n})\{t_1,\ldots t_{n}\}$. For dimension reasons this can only happen
when $i\leq n-2$, but since we have one  fewer variable $\otau_i$ than
$\mu_j$, these will not add up correctly. 

\begin{defn}
 Since the Rognes element $t_1\mu_1^{p^{n-1}}+t_2\mu_2^{p^{n-1}}+\ldots
+t_{n}\mu_{n}^{p^{n-1}}$ is a cycle, it represents an element in 
 $k(n-1)_*(F(E_2T^n_+,\loday_{T^\uord{n}}\HF)^{T^n})$ which we call \emph{the Rognes class}.
\end{defn}

\begin{prop} \label{prop:noRognesDiff}
Let $p\geq 5$ and $1\leq n \leq  p$ or $p=3$ and $1\leq n \leq 2$.
 The Rognes element $t_1\mu_1^{p^{n-1}}+t_2\mu_2^{p^{n-1}}+\ldots
+t_{n}\mu_{n}^{p^{n-1}}$ in  
 $$E^2(k(n-1),n)\cong k(n-1)_*(\loday_{T^\uord{n}}\HF)\{1,t_1,\ldots,
t_{n}\}\Rightarrow k(n-1)_*(F(E_2T^n_+,\loday_{T^\uord{n}}\HF)^{T^n})$$
 is not hit by any differential, and hence the Rognes class is a 
 non-zero element in $k(n-1)_*(F(E_2T^n_+,\loday_{T^\uord{n}}\HF)^{T^n})$.
\end{prop}

\begin{proof}
Since $k(n-1)_*(\loday_{T^\uord{n}}\HF)\subseteq H_*(\loday_{T^\uord{n}}\HF)$, the
differentials in $E^2(k(n-1),n)$ are determined by the differentials in
$E^2(\HF,n)$.
 By Theorem 5.2 in \cite{HesselholtMadsen97} and Proposition 4.9 in \cite{AngeltveitRognes05}, $\sOper_i(\otau_j) =
\mu_i^{p^j}$, so Proposition \ref{prop:no_diff_hit_mu} yields
$d^2(\otau_j)=\sum_{i=1}^{n}t_i\mu_i^{p^j}$.
 Assume $z$ is an element in $k(n-1)_*(\loday_{T^\uord{n}}\HF)$ with differential
$d^2(z) =
\sum_{i=1}^{n}t_i\mu_i^{p^{n-1}}$. It can be written as $$z=\otau_0z_0+\ldots+
\otau_{n-2}z_{n-2}+z',$$ where $z'$ is an element in $P(\oxi_1,\oxi_2,\ldots)\otimes
\Loday(T^\uord{n})$ and $z_i$, for $0\leq i \leq n-2$, are elements in $E(\otau_{i+1},\otau_{i+2},\ldots)\otimes P(\oxi_1,\oxi_2,\ldots)\otimes
\Loday(T^\uord{n})$. 
By Proposition
\ref{prop:no_diff_hit_mu}, $d^2(z')$ is in $\polIdeal\{t_1,\ldots, t_{n}\}$, and obviously the products $\otau_id^2(z_i)$ for 
$0\leq i \leq n-2$ is in $\polIdeal\{t_1,\ldots, t_{n}\}$
so we must have
\begin{equation} \label{eq:differential_hit_rognes}
 d^2(\otau_0)z_0+\ldots+
d^2(\otau_{n-2})z_{n-2} = \sum_{j=0}^{n-2}(t_1\mu_1^{p^j}+\ldots
 + t_{n}\mu_{n}^{p^j})z_j = \sum_{i=1}^{n}t_i\mu_i^{p^{n-1}} +y,
\end{equation}
for some  $y$
in $\polIdeal\{t_1,\ldots, t_{n}\}$.

Write  the elements $z_i$ in the monomial basis in $A_*\otimes
L(T^\uord{n})\cong A_*\otimes
\bigotimes_{U\subseteq\uord{n}}B_U$.
For equation \ref{eq:differential_hit_rognes} to hold, at least one of the
$z_i$-s must have a non-zero
coefficient in front of $\mu_1^{p^{n-1}-p^{i}}$. We let $k_1\geq 0$ be the
greatest integer $i$ such that this coefficient is non-zero. 

 Let $k_2<k_1$ be the
greatest integer where the coefficient in front of
$\mu_{1}^{p^{n-1}-p^{k_1}}\mu_{2}^{p^{k_1}-p^{k_2}}$ in $z_{k_2}$ is non-zero. 
Such an integer must exist, because the coefficient in front of 
$t_2\mu_{2}^{p^{k_1}}\mu_{1}^{p^{n-1}-p^{k_1}}$ on the left hand side in equation
\ref{eq:differential_hit_rognes} would otherwise be non-zero due to  the
contribution from $d^2(\otau_{k_1})z_{k_1}$. 

Continuing in this way we get that, since there are $n$ variables $t_i$,
there must be a sequence of integers $k_1>\ldots >k_{n}$ such that the
coefficient in front of the monomial
 $\mu_{1}^{p^{n-1}-p^{k_1}}\mu_{2}^{p^{k_1}-p^{k_2}}\cdots
\mu_{n}^{p^{k_n}-p^{k_{n}}}$ in $z_{k_{n}}$ is non-zero. But this is
impossible since there are only $n-1$ number of variables
$z_i$. 

We thus get a contradiction, so there is no element $z$ in
$k(n-1)_*(\loday_{T^\uord{n}}\HF)$ with differential $d^2(z) =
\sum_{i=1}^{n}t_i\mu_i^{p^{n-1}}$.
\end{proof}

\begin{thm} \label{thm:periodic_elm}
Let $p\geq 5$ and $1\leq n \leq  p$ or $p=3$ and $1\leq n \leq 2$. Then $v_{n-1}$ in $
k(n-1)_*((\loday_{T^\uord{n}}\HF)^{hT^{n}})$ 
is non-zero, and is detected by the Rognes class in the homotopy spectral sequence.
Equivalently, the homomorphism
 $$\xymatrix{ k(n-1)_*(\Sigma^{2p^{n-1}-2}F(E_2T^n_+,\loday_{T^\uord{n}}\HF)^{T^n})
\ar[r]^-{v_{n-1}} &
 k(n-1)_*(F(E_2T^n_+,\loday_{T^\uord{n}}\HF)^{T^n}) 
 }$$
 maps $1$ to a non-zero class.

\end{thm}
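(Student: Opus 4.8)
The plan is to assemble Propositions~\ref{prop:d^2_is_enough},~\ref{prop:no_diff_hit_mu} and~\ref{prop:noRognesDiff}, which between them already contain all of the content. Write $x = t_1\mu_1^{p^{n-1}}+\ldots+t_n\mu_n^{p^{n-1}}$ for the Rognes element. The first step is to record where $x$ sits: since $|t_i|=(-2,0)$ and $|\mu_i^{p^{n-1}}|=2p^{n-1}$, the class $x$ lies in $E^2_{-2,2p^{n-1}}(k(n-1),n)$, of total degree $2p^{n-1}-2$, which matches both $|v_{n-1}|$ and the suspension $\Sigma^{2p^{n-1}-2}$ appearing in the statement. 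This is exactly the bidegree in which Proposition~\ref{prop:d^2_is_enough} operates.

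Next I would check the two hypotheses of Proposition~\ref{prop:d^2_is_enough}. By Proposition~\ref{prop:noRognesDiff} the class $x$ is a cycle which is not hit by any differential; since the $2$-skeleton spectral sequence is concentrated in the two columns $s=0$ and $s=-2$, this means precisely that $x$ survives to $E^3_{-2,2p^{n-1}}(k(n-1),n)$. For the second hypothesis, I would invoke the differential formula of Proposition~\ref{prop:no_diff_hit_mu} together with the identity $\sigma_i(\otau_j)=\mu_i^{p^j}$ recalled in the proof of Proposition~\ref{prop:noRognesDiff}, which give $d^2(\otau_{n-1})=\sum_{i=1}^n t_i\mu_i^{p^{n-1}}=x$ in $E^2(\HF,n)$. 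Proposition~\ref{prop:d^2_is_enough} then yields $x=u\,v_{n-1}$ for some unit $u$ in $k(n-1)_*(F(E_2T^n_+,\loday_{T^\uord{n}}\HF)^{T^n})$.

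Since the Rognes class $x$ is nonzero by Proposition~\ref{prop:noRognesDiff}, this identity shows $v_{n-1}=u^{-1}x\neq 0$ in $k(n-1)_*(F(E_2T^n_+,\loday_{T^\uord{n}}\HF)^{T^n})$ and that $v_{n-1}$ is detected by the Rognes class; as multiplication by $v_{n-1}$ carries $1$ to $v_{n-1}$, this is the second (``equivalently'') formulation. To obtain the statement for the full homotopy fixed points $(\loday_{T^\uord{n}}\HF)^{hT^n}=F(ET^n_+,\loday_{T^\uord{n}}\HF)^{T^n}$, I would use naturality of the unit: the projection $F(ET^n_+,-)^{T^n}\to F(E_2T^n_+,-)^{T^n}$ induced by $E_2T^n\hookrightarrow ET^n$ sends the image of $v_{n-1}$ to the image of $v_{n-1}$, and since the latter is nonzero in the target it is already nonzero in the source.

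The real work has already been done in the earlier propositions, in particular in the combinatorial argument of Proposition~\ref{prop:noRognesDiff} showing that the Rognes element is not a boundary. Consequently the only points in the present theorem requiring care are the bidegree bookkeeping that places $x$ in the slot $E^2_{-2,2p^{n-1}}$ where Proposition~\ref{prop:d^2_is_enough} can be applied, and the final naturality step transferring nonvanishing from the $2$-skeleton model $F(E_2T^n_+,\loday_{T^\uord{n}}\HF)^{T^n}$ to $(\loday_{T^\uord{n}}\HF)^{hT^n}$; neither is a serious obstacle.
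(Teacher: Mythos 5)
Your proposal is correct and takes essentially the same route as the paper: the paper's proof likewise combines Proposition \ref{prop:noRognesDiff} (survival of the Rognes element) with Proposition \ref{prop:d^2_is_enough} (whose differential hypothesis $d^2(\otau_{n-1})=t_1\mu_1^{p^{n-1}}+\ldots+t_n\mu_n^{p^{n-1}}$ is obtained, as you do, from Proposition \ref{prop:no_diff_hit_mu} and $\sOper_i(\otau_{n-1})=\mu_i^{p^{n-1}}$), and then transfers non-vanishing to $k(n-1)_*((\loday_{T^\uord{n}}\HF)^{hT^n})$ via the same naturality argument for the unit map and the restriction along $E_2T^n\rightarrow ET^n$, displayed in the paper as a commutative diagram.
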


Whether higher powers of $v_{n-1}$ are non-zero is not known, and similar arguments to those in this article is probably not sufficient to resolve this question, as the number of potential differentials in the spectral sequences increases as the power increases.  
\begin{proof}
 The unit map $S^0\rightarrow (\loday_{T^\uord{n}}\HF)^{hT^{n}}$ and the
inclusion $E_2T^{n}\rightarrow ET^{n}$ induces the vertical
homomorphisms in the commutative diagram 
 $$\xymatrix{
  k(n-1)_*(\Sigma^{2p^{n-1}-2} S^0) \ar[r]^-{v_{n-1}}
\ar[d] &
 k(n-1)_*(S^0) \ar[d] \\ 
 k(n-1)_*(\Sigma^{2p^{n-1}-2}(\loday_{T^\uord{n}}\HF)^{hT^{n}}) \ar[r]^-{v_{n-1}}
\ar[d] &
 k(n-1)_*((\loday_{T^\uord{n}}\HF)^{hT^{n}}) \ar[d] \\
  k(n-1)_*(\Sigma^{2p^{n-1}-2}F(E_2T^n_+,\loday_{T^\uord{n}}\HF)^{T^n})
\ar[r]^-{v_{n-1}} &
 k(n-1)_*(F(E_2T^n_+,\loday_{T^\uord{n}}\HF)^{T^n}) 
 .}$$
 By Proposition \ref{prop:noRognesDiff} and \ref{prop:d^2_is_enough} the
homomorphism $v_{n-1}$ maps $1$ in the lower left hand corner to the non-zero
element represented by the cycle $t_1\mu_1^{p^{n-1}}+t_2\mu_2^{p^{n-1}}+\ldots
+t_{n}\mu_{n}^{p^{n-1}}$ in the lower right hand corner.
Hence, the image of $v_{n-1}$ must be non-zero in the middle group on the right hand
side of the diagram. 
\end{proof}

\section{Spectral Sequences} \label{sec:barSS}
In this section we review the well known bar spectral sequence which is the most important tool in our calculations. 

Let ${X_*}$ be a simplicial spectrum and define the simplicial abelian group
$\pi_t({X_*})$ to be $\pi_t(X_q)$ in degree $q$ with face and degeneracy
homomorphisms induced by the face and degeneracy maps in ${X_*}$. Write
$|{X_*}|$ for the realization of the simplicial spectrum ${X_*}$. 

The spectral sequence below is well known for spaces, and appears for $S$-modules in
Theorem X.2.9 in \cite{EKMM97}.

\begin{prop} \label{prop:SimpSpecSS}
Let ${X_*} $ be a simplicial spectrum, and assume
that $\sk_s({X_*})\rightarrow \sk_{s+1}({X_*})$ is a cofibration for all
$s\geq 0$. 
There is a strongly convergent spectral sequence
 $$E^2_{s,t}({X_*})= H_s(\pi_t({X_*}))\Rightarrow \pi_{s+t}({X_*}).$$

Let $R$ be a ring spectrum. If ${X_*}$ is also a simplicial $R$-algebra, then $E^*_{*,*}({X_*})$ is a
$\pi_*(R)$-algebra spectral sequence.

\end{prop}

Let $R$ be a commutative ring spectrum,
$M$ be a cofibrant right $R$-module, $N$ be a left $R$ module and let $B(M,R,N)$
be the bar
construction. In more details, $B(M,R,N)$ is the simplicial spectrum which in degree $q$
is equal to $M\wedge R^{\wedge q}\wedge N$, and where the face and
degeneracy maps are induced by the same formulas as in the algebra case using
the unit map and multiplication map. By Lemma 4.1.9 in
\cite{Shipley00} there is an equivalence $|B(M,R,N)|\simeq |M\wedge_RN|$. 

\begin{prop} \label{prop:barSS}
  Let $R$ be a bounded below ring spectrum, $M$ a right $R$-module and $N$ a left $R$-module.
  Then there is a strongly convergent spectral sequence 
  $$E^2_{s,t}= \tor_s^{\pi_*R}(\pi_*M,\pi_*N)_t\Rightarrow
\pi_{s+t}(M\wedge^L_RN).$$
\end{prop}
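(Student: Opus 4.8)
The plan is to realize the Tor spectral sequence as the simplicial spectral sequence of the bar construction $B(M,R,N)$, using the tools already assembled. First I would recall that $|B(M,R,N)|\simeq M\wedge^L_RN$ by Lemma 4.1.9 in \cite{Shipley07}, so it suffices to compute $\pi_*|B(M,R,N)|$. To apply Proposition \ref{prop:SimpSpecSS}, I would first verify its hypothesis that the skeletal inclusions $\sk_s(B(M,R,N))\rightarrow\sk_{s+1}(B(M,R,N))$ are cofibrations; this is where cofibrancy of $M$ as an $R$-module is used, ensuring that the degeneracies split off as a cofibration and the latching maps are well behaved. Granting this, Proposition \ref{prop:SimpSpecSS} produces a strongly convergent spectral sequence
$$E^2_{s,t}=H_s\big(\pi_t(B(M,R,N))\big)\Rightarrow \pi_{s+t}(M\wedge^L_RN).$$

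The remaining work is to identify the $E^2$-term with $\tor_s^{\pi_*R}(\pi_*M,\pi_*N)_t$. In simplicial degree $q$ the bar construction is $M\wedge R^{\wedge q}\wedge N$, so I would compute its homotopy. The key computational input is the Künneth isomorphism: since $R$ is bounded below, smashing over the sphere gives
$$\pi_t\big(M\wedge R^{\wedge q}\wedge N\big)\cong\big(\pi_*M\otimes (\pi_*R)^{\otimes q}\otimes\pi_*N\big)_t.$$
The boundedness of $R$ (together with the bounded-below hypotheses implicit for $M$ and $N$) guarantees the relevant Tor vanishing so that this Künneth map is an isomorphism rather than merely the edge of a further spectral sequence. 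Under this identification, the simplicial structure on $\pi_t(B(M,R,N))$ is precisely the standard (algebraic) bar complex computing $\tor^{\pi_*R}_*(\pi_*M,\pi_*N)$, because the face and degeneracy maps of $B(M,R,N)$ are induced by the same unit and multiplication formulas as in the algebra case. Hence $H_s$ of the simplicial abelian group $\pi_t(B(M,R,N))$ is exactly $\tor_s^{\pi_*R}(\pi_*M,\pi_*N)_t$.

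I expect the main obstacle to be the Künneth step, namely ensuring that $\pi_t(M\wedge R^{\wedge q}\wedge N)$ really is the algebraic tensor product with no correction terms, uniformly in $q$. This is precisely the role of the bounded-below hypothesis on $R$: it forces the auxiliary Künneth/Tor spectral sequence for each smash product to degenerate in the relevant range, so that the homotopy of the simplicial spectrum agrees levelwise with the algebraic bar complex. Once this levelwise identification of simplicial abelian groups is in place, matching the face maps is a routine check, and strong convergence is inherited directly from Proposition \ref{prop:SimpSpecSS}.
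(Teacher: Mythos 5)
Your skeleton --- realizing $M\wedge^L_RN$ as $|B(M,R,N)|$ via Lemma 4.1.9 of \cite{Shipley07} and applying Proposition \ref{prop:SimpSpecSS} --- is exactly what the paper's one-line proof (a citation of Theorem X.2.9 of \cite{EKMM97}) gestures at, but your justification of the $E^1$-identification has a genuine gap: the K\"unneth step is false. The levels $M\wedge R^{\wedge q}\wedge N$ are smash products over the sphere spectrum $S$, and for bounded below spectra there is in general no isomorphism $\pi_t(M\wedge R^{\wedge q}\wedge N)\cong(\pi_*M\otimes(\pi_*R)^{\otimes q}\otimes\pi_*N)_t$. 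Bounded-below-ness buys \emph{convergence} of the K\"unneth spectral sequence over $\pi_*S$, not its collapse; collapse would require flatness over $\pi_*S$, which essentially never holds. Concretely, take $R=S$ and $M=N=\HF$: all hypotheses of the proposition are satisfied, and your claim in simplicial degree $q$ reads $\pi_*(\HF\wedge\HF)\cong\bb{F}_p\otimes_{\pi_*S}\bb{F}_p=\bb{F}_p$, whereas the left-hand side is the dual Steenrod algebra $A_*$. (Indeed, for $R=S$ the proposition \emph{is} the K\"unneth spectral sequence, which your argument would force to degenerate.) A secondary issue: even where the levelwise identification holds, the two-sided bar complex with tensor products taken over the ground ring computes Tor relative to that ground ring, which agrees with absolute $\tor^{\pi_*R}$ only under flatness over the ground ring --- automatic over a field, but not over $\pi_*S$.

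The proof that works in the stated generality (and is how \cite{EKMM97} actually constructs the Tor spectral sequence) abandons the bar filtration: choose a free resolution of $\pi_*M$ over $\pi_*R$ and realize it by a sequence of cofibrations of $R$-modules whose subquotients are wedges of suspensions of $R$, with colimit a cell approximation of $M$. Applying $\pi_*(-\wedge_RN)$ to this filtration yields an exact couple whose $E^1$ is the resolution tensored over $\pi_*R$ with $\pi_*N$ --- here one only needs $\pi_*\big((\bigvee_i\Sigma^{n_i}R)\wedge_RN\big)\cong\bigoplus_i\Sigma^{n_i}\pi_*N$, no K\"unneth theorem --- so $E^2=\tor^{\pi_*R}(\pi_*M,\pi_*N)$, and the bounded-below hypotheses give strong convergence. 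Your bar-construction route does become correct whenever the levelwise K\"unneth isomorphism actually holds, for instance working internally to $\HF$-modules with smash products over $\HF$, since $\pi_*\HF=\bb{F}_p$ is a field; that is the setting of the paper's applications, but it does not prove the proposition as stated.
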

\begin{proof}
 For $S$-modules this is a corollary of Theorem X.2.9 in \cite{EKMM97}.
\end{proof}

\begin{remark} \label{rem:isCoalg}
 Let $X_*$ be a cofibrant simplicial $R$-module. If $\pi_*(X_*)$ is flat as a $\pi_*(R)$-module, this proposition yields an isomorphism
 $\pi_*(X_*\wedge^L_RX_*)\cong \pi_*(X_*)\otimes_{\pi_*(R)}\pi_*(X_*) $.

If
${X_*}$ is a simplicial
$R$-coalgebra, i.e., there is a coproduct map $\comult:{X_*}\rightarrow  {X_*}\wedge_{R}{X_*}$ with
a counit map ${X_*}\rightarrow R$ making the obvious diagrams commute up to
homotopy, and $\pi_*(X_*)$ is flat as a $\pi_*(R)$-module, then $\pi_*(X_*)$ is a $\pi_*(R)$-coalgebra
with coproduct induced by $\comult$ followed by the isomorphism $\pi_*(X_*\wedge^L_RX_*)\cong \pi_*(X_*)\otimes_{\pi_*(R)}\pi_*(X_*) $.
\end{remark}

\begin{prop}  \label{prop:barSSisCoalg}
Assume that ${X_*}$ is a cofibrant simplicial
$R$-coalgebra, and assume
that the map $\sk_s({X_*})\rightarrow \sk_{s+1}({X_*})$ is a cofibration for all
$s\geq 0$.
If each term $E^r({X_*})$ for $r\geq 1$ is flat over $\pi_*(R)$ then $E^*_{*,*}({X_*})$ is a 
$\pi_*(R)$-coalgebra spectral sequence.
If in addition, $\pi_*(X_*)$ is flat as a $\pi_*(R)$-module, then the 
spectral sequence converges to $\pi_*(X_*)$ as a $\pi_*(R)$-coalgebra.
\end{prop}
\begin{proof}
A similar statement is proven for $X=\loday_{S^1}R$ and mod $p$ homology in Theorem 4.5 in \cite{AngeltveitRognes05}. 
This proof also works for $T^n$ and $S^n$, since we can make the pinch maps simplicial, so that they descend to maps of chain complexes. 
\end{proof}

In particular, for  $B(R,\loday_{X}R,R)\simeq \loday_{S^1\wedge X}R$ we have
the following proposition.

\begin{prop} \label{prop:sigma_in_SS}
 Let $R$ be a commutative ring spectrum and let $X$ be a simplicial set.
The derivation
$$\sOperSm:\pi_*(\loday_{X}R)\rightarrow \pi_*(\loday_{S^1\wedge
X}R)$$
 defined in, \ref{def:circleInLodayMap}, takes $z\in \pi_n(\loday_{X}R)$ to the class of $[z]$ in 
$$E^2_{s,t}=\tor_s^{\pi_*(\loday_{X}R)}(\pi_*( R),\pi_*(R))_t\Rightarrow \pi_{s+t}(\loday_{S^1\wedge
X}R),$$
where $[z]$ in the reduced bar complex $B(\pi_*( R),\pi_*(\loday_{X}R),\pi_*(R))$, is represented by $z$ in $E^1_{1,n}\cong B_1(\pi_*( R),\pi_*(\loday_{X}R),\pi_*(R))_n \cong \pi_n(\loday_{X}R)$ .

\end{prop}

\begin{proof}
Using the minimal simplicial model for $S^1$ we get a simplicial spectrum 
$S^1_+\wedge \loday_{X}R$ which in simplicial degree $q$ is equal to 
$(S^1_q)_+\wedge \loday_{X}R\cong (\loday_{X}R)^{\vee q}$, the $q$-fold wedge of $\loday_{X}R$.
In the $E^2$-term of the spectral sequence in Proposition \ref{prop:SimpSpecSS} associated with this simplicial spectrum, 
the element $[S^1]\otimes z$ is represented by $1\oplus z$ in 
$E^1_{1,*}\cong \pi_*(\loday_{X}R\vee\loday_{X}R)\cong \pi_*(\loday_{X}R)\oplus \pi_*(\loday_{X}R)$, where the second summand corresponds to
the non-degenerate simplex in $S^1_1$. 

Similarly, there is a simplicial model for the spectrum $\loday_{S^1\wedge
X}R$, which in simplicial degree $q$ is equal to 
$\loday_{S^1_q\wedge X}R\cong \loday_{\bigvee_q X}R \cong (\loday_{X}R)^{\wedge_R q-1}$, the $(q-1)$-fold smash product over $R$.
The map $\sSmashInc:S^1_+\wedge \loday_{X}R\rightarrow \loday_{S^1\times X}R$, defined in Definition \ref{def:circleInc}, is given on these simplicial models in degree $q$ by the natural map
$$(\loday_{X}R)^{\vee q}\rightarrow (\loday_{X}R)^{\wedge q}\rightarrow (\loday_{X}R)^{\wedge_R q-1}$$
where the first map is induced by the inclusion into the various smash factors using the unit maps, 
and the second map is induced by the map 
$\loday_{X}R \rightarrow \loday_{\point}R$ on the factor indexed by the simplex which is the image of the $0$-th simplex under a $q$-th fold composition of degeneracy maps.
The element $\sOperSm(z)$ in the spectral sequence from Proposition \ref{prop:SimpSpecSS} associated with this simplicial spectrum, is thus 
represented by the element $z$ in $E^1_{1,*}\cong \pi_*(\loday_{X}R)$. 

Now we have to compare this last spectral sequence, with the spectral sequence coming from the bar complex $B(R,\loday_{X}R,R)$.
In simplicial degree $q$, $B(R,\loday_{X}R,R)$ is equal to $R\wedge \loday_{X}R^{\wedge q-1} \wedge R\cong \loday_{S^0\amalg (\coprod_qX)}R$.
The equivalence between $B(R,\loday_{X}R,R)$ and the model above is induced by the map $S^0\amalg \coprod_qX\rightarrow \bigvee_qX$ identifying $S^0$ and the basepoints in $X$ to the base point in $\bigvee_qX$. 
The element $\sOperSm(z)$ is thus represented by the class of $[z]$ in 
$$E^2_{s,t}=\tor^{\pi_*(\loday_{X}R)}(\pi_*( R),\pi_*(R))\Rightarrow \pi_{s+t}(\loday_{S^1\wedge
X}R),$$
where $[z]$ is in the reduced bar complex $B(\pi_*( R),\pi_*(\loday_{X}R),\pi_*(R))$.
\end{proof}

\begin{prop} \label{prop:shortestDiff}
 Let $R$ be a commutative ring and let $E^*$ be a first quadrant
connected $R$-Hopf algebra spectral sequence. The shortest non-zero
differentials in $E^*$ of lowest total degree, if there are any, are from an indecomposable element in $E^*$ to
a primitive
element in $E^*$. 
\end{prop}
\begin{proof}
See Proposition 4.8 in \cite{AngeltveitRognes05}
\end{proof}

The next proposition shows that in certain circumstances the coalgebra
structure of the abutment in a spectral sequence  is determined by the algebra structure of the dual spectral sequence. We
will use it to calculate the $\bb{F}_p$-Hopf algebra structure of
$\pi_*(\loday_{S^n}\HF)$.

 Let $R$ be a field with characteristic different than $2$, and let  $$\xymatrix{
0\ar[r] & A_{0} \ar[r]^i \ar[d]^= &
A_{1} \ar[d]^j \ar[r]^i &
A_{2} \ar[d]^j \ar[r] & \ldots \\ 
& E^1_{0} & E^1_{1} \ar[ul]_k &E^1_2 \ar[ul]_k
} $$ be an unrolled exact couple of connected cocommutative
$R$-coalgebras which are of finite type.
The unrolled exact couple gives rise to a
spectral sequence $E^*$ converging strongly to $\colim_s A_{s}$ by Theorem 6.1 in \cite{Boardman99}

\begin{prop} \label{prop:coextension_SS}
 Assume that in each degree
$t$ the map $A_{s,t}\rightarrow A_{s+1,t}$ eventually stabilizes, i.e., is an isomorphism for all $s\geq u$ for some $u$ depending on $t$.
Assume that $E^\infty$-term of the spectral
sequence is isomorphic, as an $R$-coalgebra, to a tensor product of exterior
algebras and divided power algebras.
Then there are no coproduct coextensions in the abutment. 
Hence, $\colim_s A_S\cong E^\infty$
as an $R$-coalgebra.
\end{prop}
\begin{proof}
The colimit  $\colim_s A_S$ of $R$-coalgebras is constructed in the underlying
category of $R$-modules.  
 Applying $D(-)=\hom_{R}(-,R)$ to the isomorphism $\colim_s A_S\cong E^\infty$, yields an isomorphism
 $D(\colim_s A_S)\cong D(E^\infty)$ where $D(E^\infty)$ is a free graded commutative algebra. Since $D(E^\infty)$ is the associated graded algebra of the filtered commutative algebra $
D(\colim_s A_S)$ this implies that $D(\colim_s A_S)$ is a free graded commutative algebra. 
Since the maps $A_s\rightarrow A_{s+1}$ eventually
stabilizes, $D(\lim_sA_s)\cong \colim_sA_s$, so  we can dualize again, and get
that there is an 
$R$-coalgebra isomorphism $\colim_sA_s\cong E^\infty$. 
\end{proof}

\section{Primitive Elements}
\label{sec:primElm}
In this section we prove several technical statements about the
degrees of certain
admissible words and primitive elements in multifold Hopf algebras. The first two lemmas can obviously be generalized to
all $n$, but we only need them for $n\leq p$, so we keep
their formulations as simple as possible.

\begin{lemma} \label{lemma:p-adic_generators}
 Let $n\leq 2p-2$, and let $x$ be an admissible word in $B_{n}$ of even degree.
 Let $l$ be the
number of occurrences of the letter $\sBar$ in the word $x$.
The sum of the coefficients in the $p$-adic expansion of the number
$\frac{|x|}2$ is equal to $n-l$.
\end{lemma}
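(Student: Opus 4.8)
The plan is to exploit the rigid combinatorial shape of admissible words and thereby reduce the statement to a carry-free computation in base $p$. First I would record what admissibility forces at the level of single letters. Since $\mu$ may be preceded only by $\sBar$, the letter $\sBar$ only by some $\sBar^k$, and each $\sBar^k$ or $\varphi^k$ only by $\sBar$ or some $\varphi^l$, one checks that the only letter which can \emph{follow} an $\sBar^k$ is $\sBar$, and the only letter which can precede $\sBar$ is some $\sBar^k$. Hence each plain $\sBar$ and the $\sBar^k$ immediately to its left occur together as a two-letter block $\sBar^k\sBar$, which I call a block of type B, while each $\varphi^k$ occurs as a one-letter block of type A; the remaining letter is the terminal $\mu$. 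An admissible word of even degree thus reads $B_1B_2\cdots B_r\,\mu$, a concatenation of type A and type B blocks followed by $\mu$, and since $\mu$ is preceded by $\sBar^{k}\sBar$ the last block $B_r$ is of type B. (A plain $\sBar$ as leftmost letter would flip the parity and force odd degree, so for an even word the leftmost letter indeed completes a block.) Writing $a,b$ for the numbers of type A and type B blocks, the number of $\sBar$'s is $l=b$, the length is $n=2b+a+1$, and therefore $n-l=a+b+1=r+1$.

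Next, since every block boundary lies in even degree, I would track the half-degree $m$ defined by (degree at that boundary) $=2m$, applying the blocks from the right starting at $\mu$, where $m=1$. A direct calculation from the degree rules gives the two transition laws
\[
\text{type B } \sBar^k\sBar:\quad m\longmapsto p^k(m+1),\qquad\qquad \text{type A } \varphi^l:\quad m\longmapsto p^{l+1}m+p^l .
\]
After all $r$ blocks have been applied $m$ equals $|x|/2$, so it remains to control the base-$p$ digit sum $s_p(m)$, by which I mean the sum of the coefficients in the $p$-adic expansion of $m$.

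The hard part — and the only place the hypothesis $n\le 2p-2$ is used — is to show that none of these operations produces a carry in base $p$, so that each block raises $s_p(m)$ by exactly $1$. For a type A step this is immediate, since $p^{l+1}m$ has no digit below position $l+1$ while $p^l$ contributes a single $1$ in position $l$, whence $s_p(p^{l+1}m+p^l)=s_p(m)+1$. For a type B step $s_p(p^k(m+1))=s_p(m+1)$, which equals $s_p(m)+1$ precisely when adding $1$ to $m$ causes no carry, i.e. when the units digit of $m$ is at most $p-2$. I would then follow the units digit $u$ through the process: a type A step, or a type B step with $k\ge 1$, resets $u$ to $0$ or $1$, whereas a type B step with $k=0$ sends $m\mapsto m+1$ and so increments $u$ by $1$. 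Thus $u$ can only grow along a run of consecutive type B blocks with exponent $0$; such a run begins from $u\le 1$ (the initial value $m=1$, or the value left by the previous resetting block), so before its $s$-th step one has $u\le s$. Since the run contains at most $b$ type B blocks and, from $n=2b+a+1\le 2p-2$, one has $b\le p-2$, the units digit is at most $p-2$ whenever a type B step is applied, and no carry ever occurs.

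Finally, starting from $s_p(1)=1$ and adding $1$ for each of the $r$ blocks, I would conclude $s_p(|x|/2)=1+r=n-l$, which is the assertion. Everything other than the carry analysis is bookkeeping forced by the admissibility rules, so the main obstacle I anticipate is phrasing the units-digit bound cleanly enough that the inequality $b\le p-2$ — the sole point at which $n\le 2p-2$ enters — visibly excludes every carry.
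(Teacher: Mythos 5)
Your proposal is correct and is essentially the paper's argument: the paper likewise peels off one block at a time (writing an even-degree word as $\varphi^k y$ or $\sBar^k\sBar z$ and inducting on the length), notes that each block contributes exactly one new base-$p$ digit to $|x|/2$, and uses $n\le 2p-2$ to rule out a carry at the $\sBar^k\sBar$ steps. The only difference is bookkeeping in the carry exclusion: you track the units digit along runs of $\sBar^0\sBar$ blocks, while the paper cites part~\ref{lemma_part:modpAdmWord} of Lemma~\ref{lemma:AdmissWord} to conclude that a word $z$ of degree $\equiv -2\pmod{2p}$ must have length at least $2p-3$ and hence cannot occur --- the same observation, packaged as the mod-$2p$ classification of monic words.
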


\begin{proof}
The proof is by induction on $n$.
 It is true for $n=1$ since $l=0$ and $|\mu|=2$.
 Assume it is true for all $1\leq m\leq n-1$. An admissible word $x$ in
$B_{n}$
of even degree is, by part~\ref{lemma_part:oddAdmWord} in Lemma
\ref{lemma:AdmissWord},  either equal to $\varphi^ky$ or $\sBar^k\sBar z$ for
some $k\geq 0$, where $y$
and $z$ are admissible words in $B_{n-1}$ and $B_{n-2}$, respectively. 

First,
$\frac{|\varphi^ky|}2=p^k(1+p\frac{|y|}2)$, so if the sum of the
coefficients in the $p$-adic expansion of $\frac{|y|}2$ is $n-1-l$, where $l$
is the number of occurrences of $\sBar$ in $y$, the
sum of the coefficients in the $p$-adic expansion of $\frac{|\varphi^ky|}2$ is
$n-l$

Second, $\frac{|\sBar^k\sBar z|}2 =p^k(1+\frac{|z|}2)$, so if the sum of the
coefficients in the $p$-adic expansion of $\frac{|z|}2$ is $n-2-(l-1)=n-1-l$,
where $l-1$ is the number of occurrences of $\sBar$ in $z$, then  the
sum of the coefficients in the $p$-adic expansion of $\frac{|\sBar^k\sBar z|}2$
is $n-l$, unless there was carrying involved in the addition
$1+\frac{|z|}2$.  

There is only carrying involved if the degree of $z$ is equal to $-2$ modulo
$2p$, and by part~\ref{lemma_part:modpAdmWord} in Lemma~\ref{lemma:AdmissWord}
this implies that $z$ is equal to $(\sBar^0\sBar)^{p-2}\mu$, or
starts with $(\sBar^0\sBar)^{p-1}$ or $(\sBar^0\sBar)^{p-2}\varphi^0$. In
these cases $\sBar^0\sBar z$ has length at least $2p-1$, so there is no carrying
involved when $n\leq 2p-2$.
\end{proof}

\begin{lemma} \label{lemma:indecWord}
Let $Q(B_n)$ be the module of indecomposable
elements in $B_n$. 
 If $2\leq n\leq 2p$, then $Q(B_n)_{2pi-1}=0$ for all $i$ and
$\bigoplus_{i\geq 1}Q(B_n)_{2pi}$ is equal to the module generated by all
non-monic admissible words of length $n$.
\end{lemma}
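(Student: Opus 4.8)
The plan is to first identify the indecomposables $Q(B_n)$ explicitly, and then translate the two assertions into purely combinatorial statements about admissible words, which I would settle using the degree estimates in Lemmas~\ref{lemma:AdmissWord} and~\ref{lemma:p-adic_generators} together with the minimal--length computations already made in the proof of Lemma~\ref{lemma:B_n_primitive}. As an $\bb{F}_p$-algebra, $B_n$ is a tensor product of exterior algebras $E(y)$ on the odd monic words $y$ of length $n$ and divided power algebras $\Gamma(z)$ on the even monic words $z$ of length $n$. Over $\bb{F}_p$ one has $\Gamma(z)\cong\bigotimes_{k\geq 0}P_p(\gamma_{p^k}(z))$, so $Q(\Gamma(z))$ is spanned by the classes $\gamma_{p^k}(z)$ in degree $p^k|z|$ for $k\geq 0$, while $Q(E(y))$ is spanned by $y$ in degree $|y|$. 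Hence $Q(B_n)$ is spanned by the odd monic words, all in odd degrees, together with the elements $\gamma_{p^k}(z)$ for $z$ even monic and $k\geq 0$, all in even degrees; the case $k=0$ recovers the even monic words $z=\gamma_1(z)$ themselves. I would also record the elementary dictionary forced by the admissibility rules: a monic word has odd degree exactly when it begins with $\sBar$, an even monic word of length $\geq 2$ begins with $\sBar^0$ or $\varphi^0$, and a non-monic admissible word is exactly one beginning with $\sBar^k$ or $\varphi^k$ for some $k\geq 1$ (and is automatically of even degree, by part~\ref{lemma_part:oddAdmWord} of Lemma~\ref{lemma:AdmissWord}).

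For the first assertion, since $2pi-1$ is odd, $Q(B_n)_{2pi-1}$ is spanned by the odd monic words of length $n$ whose degree is $\equiv -1\pmod{2p}$, and I must show there are none. In degrees exceeding $2p$ this is immediate from the proof of Lemma~\ref{lemma:B_n_primitive}, where such a monic word is shown to have length at least $2p+1$, hence longer than $n$. For the degree $2p-1$ itself I would write an odd monic word as $\sBar x'$ with $x'$ an even admissible word of length $n-1$ and degree $2p-2$, and apply Lemma~\ref{lemma:p-adic_generators} to $x'$: the $p$-adic digit sum of $\tfrac{|x'|}{2}=p-1$ equals $p-1$, and this must equal $(n-1)-l'$, where $l'$ is the number of occurrences of $\sBar$ in $x'$. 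Feeding in $l'\geq 1$ (every admissible word ends in $\sBar\mu$) and the bound $l'\leq \tfrac{n-2}{2}$ from part~\ref{lemma_part:sBarAdmWord} of Lemma~\ref{lemma:AdmissWord} pins $n$ outside the range under consideration, so no such word exists.

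For the second assertion I would exhibit the identification underlying the statement: the assignments $\sBar^k w\mapsto \gamma_{p^k}(\sBar^0 w)$ and $\varphi^k w\mapsto \gamma_{p^k}(\varphi^0 w)$, for $k\geq 1$, define a degree--preserving bijection from the non-monic admissible words of length $n$ onto the classes $\gamma_{p^k}(z)$ with $z$ even monic of length $n$ and $k\geq 1$ (using that every even monic word of length $\geq 2$ begins with $\sBar^0$ or $\varphi^0$). Each such class has degree $p^k|z|$ divisible by $2p$, so these words span a submodule of $\bigoplus_{i\geq 1}Q(B_n)_{2pi}$. To obtain equality I must rule out the only other even indecomposables in degrees divisible by $2p$, namely the even monic words $z=\gamma_1(z)$ of degree $\equiv 0\pmod{2p}$: degrees exceeding $2p$ are excluded by the length-$\geq 2p+2$ bound from the proof of Lemma~\ref{lemma:B_n_primitive}, and degree exactly $2p$ is excluded because Lemma~\ref{lemma:p-adic_generators} would give $1=(\text{digit sum of } p)=n-l$, i.e. $l=n-1$, contradicting $l\leq\tfrac{n-1}{2}$ for $n\geq 2$. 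This identifies $\bigoplus_{i\geq 1}Q(B_n)_{2pi}$ with the span of the non-monic admissible words of length $n$. The main obstacle throughout is the degree-mod-$2p$ bookkeeping: one must separate the even monic words, which should never be divisible by $2p$, from the higher classes $\gamma_{p^k}(z)$, which always are, and the estimates become tight near the top of the range $n\leq 2p$, where the minimal--length computations of Lemma~\ref{lemma:B_n_primitive} and the digit-sum formula of Lemma~\ref{lemma:p-adic_generators} must be applied with care.
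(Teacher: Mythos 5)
Your identification of $Q(B_n)$, the degree-preserving dictionary $\sBar^k w\leftrightarrow\gamma_{p^k}(\sBar^0 w)$, $\varphi^k w\leftrightarrow\gamma_{p^k}(\varphi^0 w)$, and your treatment of all degrees $2pi-1$, $2pi$ with $i\geq 2$ are correct and essentially reproduce the paper's (three-line) proof, which disposes of monic words by citing Lemma~\ref{lemma:B_n_primitive}. The genuine gap is the case $i=1$, which the paper's proof silently omits (Lemma~\ref{lemma:B_n_primitive} is stated only for $i\geq 2$) and which you try to close with Lemma~\ref{lemma:p-adic_generators}; the conclusion you draw there is wrong. For an odd monic word $\sBar x'$ of length $n$ and degree $2p-1$, the digit-sum identity gives $p-1=(n-1)-l'$, i.e.\ $l'=n-p$, and together with $1\leq l'\leq\frac{n-2}{2}$ this confines $n$ to $p+1\leq n\leq 2p-2$, which lies \emph{inside} the range $2\leq n\leq 2p$, not outside it. This is not a repairable slip: $\sBar(\sBar^0\sBar)^{p-2}\mu$ is an admissible odd monic word of length $2p-2$ and degree exactly $2p-1$, so $Q(B_{2p-2})_{2p-1}\neq 0$. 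Likewise, your exclusion of even monic words in degree exactly $2p$ applies Lemma~\ref{lemma:p-adic_generators} to words of length up to $2p$, but that lemma is only proved for lengths at most $2p-2$ (its proof fails precisely when carrying occurs in the addition $1+\frac{|z|}{2}$), and indeed $(\sBar^0\sBar)^{p-1}\mu$ is an even monic word of length $2p-1$ and degree exactly $2p$, so the second assertion fails at $n=2p-1$.

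In other words, what your computation actually exposes is that the lemma as stated is false at the top of its range ($n=2p-2$ for the vanishing claim, $n=2p-1$ for the identification claim), and the paper's own citation-only proof never addresses $i=1$ either. Your argument is correct and complete exactly where the lemma is later invoked, namely for words of length at most $p$ (Theorem~\ref{thm:smash_over_torus} and Corollary~\ref{corr:dimOfnFoldPrim}): for $2\leq n\leq p$ the relation $l'=n-p\geq 1$ is impossible, and in degree $2p$ the relation $l=n-1\leq\frac{n-1}{2}$ is impossible, so no monic word of length $n$ lies in degree $\equiv -1,0\pmod{2p}$. The correct repair is therefore to restrict the statement (or at least its $i=1$ part) to $2\leq n\leq p$, or to supplement it with the boundary analysis above; as written, neither your argument nor the paper's establishes the lemma for all $2\leq n\leq 2p$, because it is not true there.
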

\begin{proof}
 The module of indecomposable elements is generated by all admissible words of
length $n$. All non-monic words are in degree
$0$ modulo $2p$. All monic words
are primitive, so by~\ref{lemma:B_n_primitive} they are not in degree $-1$ or
$0$ modulo $2p$ when $2\leq n \leq 2p$.  
\end{proof}

\begin{lemma} \label{lemma:p-adic_product_mu}
The sum of the coefficients in the $p$-adic expansion of $\frac{|\mu_1^{p^{j_1}}\mu_2^{p^{j_2}}\ldots\mu_n^{p^{j_n}}|}2$, where
$j_i\geq 0$ and $|\mu_i|=2$ for $1\leq i \leq n$, is equal  to 
$n$ when $0<n<p$ and $n$ or $n-p+1$ when $p\leq n <2p$.
\end{lemma}
\begin{proof}
If less than $p$ of the numbers $j_i$ are equal, we get the case $n$, and if
at least $p$ of the numbers $j_i$ are equal, we get the case $n-p+1$. 
\end{proof}
\begin{corr} \label{corr:degOfComult}
Let $x$ be an admissible word in $B_n$ of even
degree. 

If $1\leq n\leq p$, then the degree of $x$ is not equal to the degree
of 
$\mu_1^{p^{j_1}}\mu_2^{p^{j_2}}\ldots\mu_n^{p^{j_n}}$, where $j_i\geq 0$ for
$1\leq i \leq n$. 

If $p\geq 5$,  $1\leq n\leq p$ and $1\leq s \leq n$, then the
degree of $x$ is not equal to the degree
of 
$(\mu_1^{p^{j_1}}\mu_2^{p^{j_2}}\ldots\mu_n^{p^{j_n}})\mu_s^{p^{j_{n+1}}}$,
where $j_i\geq 0$ for $1\leq i \leq n+1$.  
\end{corr}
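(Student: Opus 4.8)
The plan is to compare the two degrees through a single numerical invariant: the sum of the coefficients in the $p$-adic expansion of half the degree, which I will call the \emph{digit sum}. Since equal degrees force equal digit sums, it suffices to show that the two digit sums can never coincide. Assume $n\geq 2$ (the case $n=1$ is genuinely exceptional and I return to it at the end). The terminal letter $\mu$ of $x$ is immediately preceded by $\sBar$, so $x$ contains at least one $\sBar$; writing $l$ for the number of occurrences of $\sBar$, we have $l\geq 1$. By Lemma \ref{lemma:p-adic_generators} the digit sum of $\frac{|x|}2$ equals $n-l$, and part \ref{lemma_part:sBarAdmWord} of Lemma \ref{lemma:AdmissWord} together with $n\leq p$ yields $1\leq l\leq\frac{n-1}2\leq\frac{p-1}2$; in particular the digit sum of $\frac{|x|}2$ lies in $\big[\frac{n+1}2,\,n-1\big]$.

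For the first statement, $\frac12|\mu_1^{p^{j_1}}\cdots\mu_n^{p^{j_n}}|=\sum_i p^{j_i}$ is a sum of $n$ powers of $p$ with $1\leq n\leq p<2p$, so by Lemma \ref{lemma:p-adic_product_mu} its digit sum is $n$, or, only when $n=p$, possibly $n-p+1=1$. An equality of degrees would force $n-l\in\{n,1\}$, i.e.\ $l=0$ or $l=n-1=p-1$. The first is impossible since $l\geq 1$, and the second since $l\leq\frac{p-1}2<p-1$. Hence the degrees differ.

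For the second statement the relevant half-degree is $\sum_{i=1}^{n}p^{j_i}+p^{j_{n+1}}$, a sum of $n+1$ powers of $p$, hence the half-degree of a product of $n+1$ powers of $\mu$; thus Lemma \ref{lemma:p-adic_product_mu} applies with $n+1$ in place of $n$ (legitimate as $n+1\leq p+1<2p$). Its digit sum is $n+1$, or, when $n+1\geq p$, i.e.\ $n\in\{p-1,p\}$, possibly $(n+1)-p+1=n-p+2$. Matching against $n-l$ excludes $n+1$ at once (it would force $l=-1$), leaving only the dangerous equation $n-l=n-p+2$, that is $l=p-2$. This is exactly where the hypothesis $p\geq 5$ is used: it is equivalent to $\frac{p-1}2<p-2$, so the bound $l\leq\frac{p-1}2$ forces $l\neq p-2$, and the degrees differ.

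The two appeals to Lemma \ref{lemma:p-adic_product_mu} and the single appeal to Lemma \ref{lemma:p-adic_generators} are routine; the only real content is the arithmetic verifying that the band $\big[\frac{n+1}2,\,n-1\big]$ holding $n-l$ avoids the monomial digit sums, which are spaced $p-1$ apart. The crux, and the reason $p=3$ must be excluded from the second statement, is precisely the strict inequality $\frac{p-1}2<p-2$, valid exactly for $p\geq 5$. Finally, tracking the lower bound $l\geq 1$ is what pins down the genuine exception $n=1$, where the word $\mu$ and the monomial $\mu_1$ do share a degree, so the statement is to be read for $n\geq 2$.
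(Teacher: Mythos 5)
Your proof is correct and takes essentially the same route as the paper's: the same digit-sum invariant via Lemma \ref{lemma:p-adic_generators}, the same bound $1\le l\le \frac{n-1}{2}$ from Lemma \ref{lemma:AdmissWord}, and the same comparison against Lemma \ref{lemma:p-adic_product_mu}, with your "dangerous equation" $l=p-2$ being exactly the paper's inequality $n-p+2<\frac{n+1}{2}$ in disguise. Your flagging of $n=1$ is also a legitimate catch: the paper's proof silently uses $l\ge 1$, which requires $n\ge 2$, and indeed $|\mu|=|\mu_1^{p^0}|$ shows the first claim must be read for $n\ge 2$, as is the case in all of the paper's applications of the corollary.
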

\begin{proof}
 By Lemma~\ref{lemma:p-adic_generators} the sum of the coefficients in the
$p$-adic expansion of $\frac{|x|}2$ is equal to $n-l$ where $l$ is the number of
occurrences of the letter $\sBar$ in $x$. Part~\ref{lemma_part:sBarAdmWord}
in Lemma~\ref{lemma:AdmissWord} says that $1\leq l \leq \frac{n-1}2$, so
 $\frac{n+1}2\leq n-l\leq n-1$.
By Lemma~\ref{lemma:p-adic_product_mu} the sum of the coefficients in the
$p$-adic expansion of
$\frac{|\mu_1^{p^{j_1}}\mu_2^{p^{j_2}}\ldots\mu_n^{p^{j_n}}|}2$ is equal
to $n$ when $0<n<p$ and $n$ or $1$ when $n=p$. 
Now, $n-l\leq n-1<n<n+1$ and when $n=p$ then $1<\frac{n+1}2=\frac{p+1}2\leq
n-l$, proving the first claim. 

The sum of the
coefficients in the
$p$-adic expansion of
$\frac{|(\mu_1^{p^{j_1}}\mu_2^{p^{j_2}}\ldots\mu_n^{p^{j_n}})\mu_i^{p^{j_{n+1}}}
| } 2$ is equal to
$n+1$ when $0<n<p-1$ and $n+1$ or $n-p+2$ when $p-1\leq n\leq p$. When
$n=p-1\geq 4$ then
$1<\frac{n+1}2=\frac{p}2\leq n-l$ and  when $n=p\geq 5$ then $2<\frac{n+1}2 =
\frac{p+1}2\leq n-l$, proving the second claim.
\end{proof}

\begin{lemma} \label{lemma:dimOfn-primitive}
Let $n\leq p$, and let $P\subseteq \bigotimes_{U\subsetneq \bf{n}}B_U$ be the
$\bb{F}_p$-submodule generated by all products $z_{U_1}\cdots
z_{U_k}$,
where $U_1,\ldots, U_k$ is a partition of $\bf{n}$, and, 
$z_{U_i}$ is a primitive element in
$B_{U_i}$, for every $i$. Then $P_{2pi-1}=0$ for every $i\geq 2$, and the
module 
$\bigoplus_{i\geq 2}P_{2pi}$ is contained
in the module generated by all the elements
$\mu_1^{p^{j_1}}\mu_2^{p^{j_2}}\ldots\mu_n^{p^{j_n}}$, where $j_i\geq 0$ for
$1\leq i \leq n$.

\end{lemma}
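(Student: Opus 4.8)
The plan is to decompose $P$ into homogeneous generators and control the total degree of each one modulo $2p$. Since primitivity is a homogeneous condition, $\prim{B_U}$ is a graded submodule, so $P$ is spanned by \emph{homogeneous} products $z=\prod_{U\in\Pi}z_U$, where $\Pi=\{U_1,\dots,U_k\}$ is a partition of $\uord{n}$ into nonempty \emph{proper} subsets (hence $k\geq 2$) and each $z_U$ is a homogeneous primitive of $B_U$. I would distinguish \emph{singleton} parts, where $B_U=P(\mu_a)$ has primitives spanned by the powers $\mu_a^{p^{j}}$ (those $\mu_a^m$ with $m$ a power of $p$, the only ones with $\binom{m}{\ell}\equiv 0$ for $0<\ell<m$), from \emph{big} parts with $|U|=m\geq 2$, where $\prim{B_U}$ is spanned by monic words of length $m$ by Lemma~\ref{lemma:B_n_primitive}. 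A generator all of whose parts are singletons is precisely a monomial $\mu_1^{p^{j_1}}\cdots\mu_n^{p^{j_n}}$, so it suffices to prove two things: (i) no homogeneous generator has degree $\equiv -1\pmod{2p}$ of size $\geq 4p-1$; and (ii) any homogeneous generator possessing a big part has degree $\not\equiv 0\pmod{2p}$.

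The key step is a per-factor estimate on the residue $r_U:=|z_U|\bmod 2p$. For a singleton, $r_U=2p^{j}\bmod 2p$ equals $2$ when $j=0$ and $0$ when $j\geq 1$, so $r_U\leq 2=|U|+1$, and $r_U\neq 0$ forces $j=0$. For a big part I would read off from part~\ref{lemma_part:modpAdmWord} of Lemma~\ref{lemma:AdmissWord} that a monic word of degree $\equiv 2c$ (resp.\ $\equiv 2c+1$) modulo $2p$ has length at least $2c-1$ (resp.\ $2c$); since a big part has length $m=|U|\leq n\leq p$, this yields $2c\leq m+1$ (resp.\ $2c+1\leq m+1$), i.e.\ $r_U\leq m+1=|U|+1$, while the residue $0$ would demand length $\geq 2p-1>p$ and so cannot occur. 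Summing over the partition, and using $\sum_{U\in\Pi}|U|=n$ together with $k\leq n$, gives the decisive inequality $\sum_{U\in\Pi}r_U\leq\sum_{U\in\Pi}(|U|+1)=n+k\leq 2n\leq 2p$.

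With this in hand both claims follow from the congruence $\sum_U r_U\equiv|z|\pmod{2p}$ and the narrow range $0\leq\sum_U r_U\leq 2p$. For (ii), $|z|\equiv 0$ forces $\sum_U r_U\in\{0,2p\}$: the value $0$ makes every $r_U=0$, impossible for a big part, while $2p$ forces $n+k=2p$ with $n\leq p$, $k\leq n$, hence $n=p=k$, i.e.\ all parts singletons and again no big part; so big-part generators never lie in a degree $2pi$, and $P_{2pi}$ is spanned by all-singleton generators, i.e.\ by the monomials $\mu_1^{p^{j_1}}\cdots\mu_n^{p^{j_n}}$. For (i), $|z|\equiv -1$ forces $\sum_U r_U=2p-1$, so $n+k\in\{2p-1,2p\}$; the case $n+k=2p$ is the all-singleton case, where each $r_U\in\{0,2\}$ makes $\sum_U r_U$ even, contradicting $2p-1$, so $n=p$, $k=p-1$ and, by equality in $\sum_U r_U\leq\sum_U(|U|+1)$, every $r_U=|U|+1$. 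The only partition of $\uord{p}$ realizing this has a single size-$2$ part carrying the word $\sBar\mu$ (degree $3$) and $p-2$ singletons each forced to $j=0$ (degree $2$), whence $|z|=3+2(p-2)=2p-1$ \emph{exactly}; thus such a generator sits in degree $2p-1$ ($i=1$) and in no degree $2pi-1$ with $i\geq 2$. This yields $P_{2pi-1}=0$ for $i\geq 2$.

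The hardest part is the per-factor residue estimate $r_U\leq|U|+1$ and the exclusion of the residue $0$ for big parts; everything downstream is elementary arithmetic exploiting $n\leq p$. The estimate rests entirely on converting the prefix descriptions in part~\ref{lemma_part:modpAdmWord} of Lemma~\ref{lemma:AdmissWord} into length lower bounds, and I would take care over the boundary cases of those prefixes (degree $\equiv 0$ and $\equiv 2p-1$, i.e.\ the endpoints $c=p$ and $c=p-1$) to be sure the offending residues truly require length exceeding $p$ and are therefore unavailable to words of length at most $p$.
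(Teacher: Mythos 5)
Your proof is correct and takes essentially the same route as the paper's: both classify the primitive factors via Milnor--Moore (monic words for parts of size at least $2$, powers $\mu^{p^j}$ for singletons) and then count degrees modulo $2p$ using the length constraints on monic words, exploiting $n\leq p$. The only difference is bookkeeping: you systematize the count through the per-factor bound $r_U\leq |U|+1$ and the global inequality $\sum_U r_U\leq n+k\leq 2p$, whereas the paper simply exhibits the extremal configurations (needing $n=p+1$, resp.\ $n=p+2$, circles) that realize the residues $-1$, resp.\ $0$, modulo $2p$ in degrees beyond $2p$.
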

\begin{proof}
 In a divided power algebra $\Gamma(x)$, the only primitive element are non-zero scalar multiples of
 $\gamma_1(x)$, and in a polynomial algebra $P(x)$ the primitive elements are generated by
$x^{p^j}$. By Proposition~3.12 in \cite{MilnorMoore65}, the
primitive elements in $B_{U_i}$ are thus linear combinations of monic words
 $w_i$ of length $|U_i|$ when $|U_i|>1$, and $\mu^{p^{j_i}}_{U_i}$ when
$|U_i|=1$.
Assume without loss of generality that $z$ is a product of such elements. 

Observe that the degree of a word starting with $\varphi^k$, $\sBar^k\sBar $
or $\mu^{p^k}$ is
$0$ modulo $2p$ when $k\geq1$. Thus multiplication with one of these words
will not
change the degree of the product modulo $2p$. 
The degree of $\varphi^0x$ and $\mu$ is $2$ modulo $2p$, and finally the degree
of $\sBar^0\sBar x$ is $2+|x|$ modulo $2p$. 

Except for the products $\mu_1^{p^{j_1}}\ldots\mu_n^{p^{j_n}}$, 
the smallest $n$ where the degree of $z$ is $0$ modulo $2p$ is thus $n=p+2$
where $z$ may be equal to 
$\mu_1\cdots
\mu_{p-2}\cdot\mu_{p-1}^{p^k}\cdot
\sBar_{p+2}^0\sBar_{p+1}\mu_{p}$.
Similarly, the smallest $n$ where the degree of $z$ can be $-1$ modulo $2p$
is $n=p+1$, where $z$ might be equal to 
$\mu_1\cdots \mu_{p-2}\cdot\mu_{p-1}^{p^k}\cdot
\sBar_{p+1}\mu_{p}$.
\end{proof}

This lemma is about which elements in $\Loday(T^\uord{n}_{n-1})$ are
simultaneously primitive in all $n$ Hopf algebra structures. For example
$\mu_1\mu_2^p\mu_3^{p^2}$ is simultaneously primitive in $\Loday(T^\uord{3})$
since it's a product of elements that are primitive in the different
circles. We only gain control over the degrees of the elements, but that is
sufficient for our needs. It's probably a very special case of a more general
statement about simultaneously primitive elements in an $S$-fold Hopf algebra,
but a more general statement has eluded us.

Given a finite subcategory $\simp\subseteq \catFinSet$ and a finite set $U$ we define 
$\simp|_U$, the \emph{restriction} of $\simp$ to $U$, to be the full
subcategory of $\simp$ with
objects $\{V\cap U|V\in \simp\}$.
The \emph{dimension} of $\simp$, is the the maximal cardinality of the
sets in $\simp$. 

\begin{lemma} \label{lemma:nPrimElmInTorus}
Assume part~3 of Theorem~\ref{thm:smash_over_torus} holds for $1\leq k\leq n-1$. Let $S$ 
be an object in $\catFinSet$ and let $\Delta$ be a saturated subcategory 
(see Definition~\ref{def:satSubCat}) of
$\catSpanSet{S}$ with dimension at
most $n-1$. 

Let $V\in \catFinSet$ and define $\bb{N}_V\subseteq
\bb{N}$ to be the set 
of degrees of monic words in $B_V$ when $|V|\geq 2$, and the set
$\{2p^i\}_{i\geq 0}$, the set of degrees of $\mu_v^{p^i}$, when $V=\{v\}$. Let
$\bb{N}_{\Delta}\subseteq \bb{N}$ be the set 
$$\bb{N}_{\Delta}=\Big\{\sum_{U_i\in \{U_1,\ldots, U_j\}}r_{U_i}\
\Big|
U_1,\ldots U_j ,\text{ is a partition of }S\text{ with } U_i\in \Delta
\text{ and }r_{U_i}\in
\bb{N}_{U_i}\Big\}.$$
If $z\in \Loday(T^\simp)$ is $S$-fold primitive, then $|z|\in
\bb{N}_{\Delta}$.
\end{lemma}
\begin{proof}
 We prove it by induction on the number of sets in $\Delta$. If
$S\setminus \big(\bigcup_{U\in \Delta}U\big)=W\not=\emptyset$, there
are no $S$-fold primitive elements in $\Loday(T^\simp)$, since 
$\Loday(T^{\simp|_{S\setminus j}})= \Loday(T^\simp)$ for any $j\in W$, so 
$\comult_S^j=\id\colon \Loday(T^\simp)\rightarrow \Loday(T^\simp)$. If
$S=\{s\}$ and $\Delta=S$ the lemma holds since $\Loday(T^\Delta)=B_{\{s\}} =
P(\mu_s)$, and the primitive elements are generated by $\mu_s^{p^i}$ for $i\geq
0$.

Let $V\in \Delta$ be a maximal set in $\Delta$, i.e., if $V\subseteq W\in
\Delta$ then $V=W$. Let $\widehat{\Delta}$ be the full subcategory
of
$\Delta$ not containing $V$.

Let $z^V_0,z^V_1,\ldots$ be an ordered monomial basis of
$B_V\subseteq \Loday(T^V)\cong \bigotimes_{U\subseteq V}B_U$ ordered so that
$|z^V_i|\leq|z^V_{i+1}|$ for all $i\geq 0$. Note that $z^V_0=1$.

When $z\not=0$ we can write $z$ uniquely as 
\begin{equation} \label{eq:proofSimPrim}
z=z^V_lx^{\widehat V}_l+z^V_{l-1}x^{\widehat V}_{l-1}+\ldots
+z_0^Vx_0^{\widehat
V},\end{equation}
where $x^{\widehat V}_i$ are elements in
$\Loday(T^{\widehat{\Delta}})\cong \bigotimes_{U\in \Delta, U\not=V}B_U$,
and $x^{\widehat V}_l\not=0$.
This is possible since $\Loday(T^\Delta)\cong
\Loday(T^{\widehat{\Delta}})\otimes B_V$.
If $l=0$, then $z\in
\Loday(T^{\widehat{\Delta}})$ and we are
done by the induction hypothesis.

Otherwise, given $j\in V$, assume $x_l^{\widehat{V}} \not\in
\Loday(T^{\Delta|_{S\setminus j}})\subseteq \Loday(T^\simp)$.
Then 
\begin{align*}
 \comult_S^j(z) &= \comult_S^j(z^V_l)\comult_S^j(x^{\widehat
V}_l)+\comult_S^j(z^V_{l-1})\comult_S^j(x^{\widehat V}_{l-1})+\ldots
+\comult_S^j(z_0^V)\comult_S^j(x_0^{\widehat
V}) \\
&=\big(1\otimes z_l^V+z_l^V\otimes 1 + \sum (z_l^V)'\otimes
(z_l^V)''\big)\big(1\otimes x_l^{\widehat{V}} + x_l^{\widehat{V}}\otimes 1 +
\sum (x_l^{\widehat{V}})'\otimes (x_l^{\widehat{V}})''\big)\\
&\phantom{=}\; +\ldots  \\
&=1\otimes z_l^Vx_l^{\widehat{V}}+ z_l^Vx_l^{\widehat{V}}\otimes 1 +
z_l^V\otimes x_l^{\widehat{V}}+  x_l^{\widehat{V}}\otimes z_l^V +\ldots
\end{align*}
Now, $\comult_S^j:\Loday(T^{\widehat{\Delta}})\rightarrow
\Loday(T^{\widehat{\Delta}})\otimes_{\Loday(T^{\widehat{\Delta}|_{S\setminus j}})}
\Loday(T^{\widehat{\Delta}})$, so the expression on the last line can not be equal to
$z\otimes 1 + 1\otimes z$ due to the summands $z_l^V\otimes x_l^{\widehat{V}}$
and $x_l^{\widehat{V}}\otimes z_l^V$ and the fact that $z_l^V,\ldots,z_0^V$ is
part of a basis and $z_l^V$ is of highest degree. Hence we get a contradiction
and  $x_l^{\widehat{V}} \in
\Loday(T^{\Delta|_{S\setminus j}})\subseteq \Loday(T^\simp)$. Doing this for all $j$
gives us that $x_l^{\widehat{V}} \in \Loday(T^{\Delta|_{S\setminus V}})\subseteq
\Loday(T^\simp)$.

For $U\in \Delta$, the projection maps 
$\pr^U_{U\setminus V}:T^U\rightarrow T^{U\setminus V}$ combine
into a map 
$$\pr:T^{\widehat{\Delta}}\rightarrow T^{\Delta|_{S\setminus V}}.$$
Since this map collapses $T^V_{|V|-1}$ to a point, the map
$\sphereproj^V:T^V\rightarrow S^{|V|}$ together
with $\pr$ induces a map  
$$\pr:T^{\Delta}\rightarrow S^{|V|}\vee T^{\Delta|_{S\setminus V}}.$$

For $j\in V$ the pinch map $\comult^j$ on the $j$-th circle induces a
commutative diagram
\begin{equation*}
\xymatrix{
 T^\Delta \ar[dd]^{\comult^j} \ar[r]^-\pr &
  S^V\vee T^{\Delta|_{S\setminus V}}
  \ar[d]^{\pinch \vee \id} \\
  & 
    S^V\vee  S^V\vee T^{\Delta|_{S\setminus V}}\\
  T^\Delta\amalg_{T^{\Delta|_{S\setminus j}}}T^\Delta 
  \ar[r]^-{\pr\amalg \pr} &
  (S^V\vee T^{\Delta|_{S\setminus V}})
  \amalg_{T^{\Delta|_{S\setminus V}}} 
  (S^V\vee T^{\Delta|_{S\setminus V}}) \ar[u]_\cong.}
\end{equation*}
Similarly, for $j\in S\setminus V$ the pinch map $\comult^j$ on the $j$-th
circle induces a commutative diagram 
\begin{equation*}
\xymatrix{
 T^\Delta \ar[dd]^{\comult^j} \ar[r]^-\pr &
  S^V\vee T^{\Delta|_{S\setminus V}}
  \ar[d]^{\id\vee \comult^j} \\
  & 
    S^V\vee (T^{\Delta|_{S\setminus V}}\amalg_{T^{\Delta|_{(S\setminus
V)\setminus j}}}T^{\Delta|_{S\setminus V}} )\\
  T^\Delta\amalg_{T^{\Delta|_{S\setminus j}}}T^\Delta 
  \ar[r]^-{\pr\amalg \pr} &
  (S^V\vee T^{\Delta|_{S\setminus V}})
  \amalg_{S^V\vee T^{\Delta|_{(S\setminus V)\setminus j}}} 
  (S^V\vee T^{\Delta|_{S\setminus V}}) \ar[u]_\cong.}
\end{equation*}

Applying the functor $\Loday(-)$ to these two diagrams yields for $j\in V$ 
a commutative diagram
\begin{equation} \label{diag:comultB_V}
\xymatrix{
 \Loday (T^\Delta) \ar[d]^{\comult^j_S} \ar[r]^-\pr &
  B_V\otimes \Loday(T^{\Delta|_{S\setminus V}})
  \ar[d]^{\comult_{B_V}\otimes \id} \\
  \Loday(T^\Delta)\otimes_{\Loday(T^{\Delta|_{S\setminus j}})}\Loday(T^\Delta) 
  \ar[r]^-{\pr\otimes \pr} &
  B_V\otimes B_V\otimes \Loday(T^{\Delta|_{S\setminus V}}),}\end{equation}
and for $j\in S\setminus V$ a commutative diagram
\begin{equation} \label{diag:comultNotB_V}
\xymatrix{
  \Loday(T^\Delta) \ar[d]^-{\comult_S^j} \ar[r]^-\pr &
  B_V\otimes \Loday(T^{\Delta|_{S\setminus V}})\ar[d]^-{\id\otimes
\comult_{S\setminus V}^j} \\
  \Loday(T^\Delta)\otimes_{\Loday(T^{\Delta|_{S\setminus j}})}\Loday(T^\Delta) 
  \ar[r]^-{\pr\otimes \pr} &
  B_V\otimes (\Loday(T^{\Delta|_{S\setminus V}})
\otimes_{\Loday(T^{\Delta|_{(S\setminus V)\setminus j}})}
\Loday(T^{\Delta|_{S\setminus V}})).}\end{equation}

We have proved that  $x^{\widehat
V}_l\in \Loday(T^{\Delta|_{S\setminus V}})$, so
\begin{equation*}
\pr(z)=z^V_lx^{\widehat V}_l+z^V_{l-1}\pr(x^{\widehat
V}_{l-1})+\ldots
+z_0^V\pr(x_0^{\widehat V}),\end{equation*}
is non-zero since $z^V_l,\ldots,z^V_0$ is part of a basis.
From Diagram~\ref{diag:comultB_V} we know that $\pr(z)$ must be primitive in the
$\Loday(T^{\Delta|_{S\setminus V}})$-Hopf algebra $B_V\otimes
\Loday(T^{\Delta|_{S\setminus V}})$, where the Hopf algebra structure is
induced by the $\bb{F}_p$-Hopf algebra structure on $B_V\cong B_{|V|}\cong \Loday(S^V).$ By the graded version of Proposition
3.12 in \cite{MilnorMoore65}, this implies that
if $\pr(x^{\widehat
V}_i)$ is non-zero then $z^V_i$ is a $V$-labeled monic
word when $|V|\geq 2$ or an element $\mu_v^{p^m}$ for some $m$ when $V=\{v\}$. 
It follows from Diagram~\ref{diag:comultNotB_V} that when $\pr(x^{\widehat
V}_i)\not=0$ it is $S\setminus V$-fold primitive. By induction the
Lemma holds for
$\pr(x^{\widehat V}_i)$, finishing the proof. 
\end{proof}

\begin{corr} \label{corr:dimOfnFoldPrim}
Given $n\leq p$, assume part~3 og Theorem~\ref{thm:smash_over_torus} holds for $1\leq k <
n$. Let $y$ be an $\uord{n}$-fold primitive element in $\Loday(T^\uord{n}_{n-1})$.
 If $x$ is an admissible word of length $n$ and degree $0$ modulo $2p$, 
, then $|x|-1\not=|y|$. 
If $z$ is an admissible word of length $n$ and of even degree, then
$|z^p|\not=|y|$. 
\end{corr}
\begin{proof}
 When $2\leq n\leq p$, Lemma~\ref{lemma:indecWord} says the admissible words of
length $n$ and degree $0$ modulo $2p$
are those that start with $\varphi^i$ or $\sBar^i$ for $i\geq 1$. Hence
$x=\sBar^ix'$ or $x=\varphi^ix'$ for $x'$ some admissible word of length
$n-1$. 
The element $\sBar^ix'$ is in degrees greater than or equal to $4p$. By Lemma
\ref{lemma:nPrimElmInTorus} and~\ref{lemma:dimOfn-primitive}, there are no
$\uord{n}$-fold primitive elements in $\Loday(T^\uord{n}_{n-1})$ in dimension
$2pm-1$ for $m\geq 2$, and hence $|x|-1\not=|y|$.

If $z$ is an admissible word of length $n$ and even degree, then by 
Lemma~\ref{lemma:indecWord} we have 
$z=\sBar^iz'$ or $z=\varphi^iz'$ for some admissible word $z'$ of length
$n-1$. So $|z^p|=|\sBar^{i+1}z'|$ or $|z^p|=|\varphi^{i+1}z'|$.
By Lemma~\ref{lemma:nPrimElmInTorus} and~\ref{lemma:dimOfn-primitive} the
degrees of the $\uord{n}$-fold primitive elements in $\Loday(T^\uord{n}_{n-1})$
are equal to the degrees of the products $\mu_1^{p^{j_1}}\ldots\mu_n^{p^{j_n}}$. By
Corollary~\ref{corr:degOfComult} neither $\sBar^{i+1}z'$ nor $\varphi^{i+1}z'$
is in the same degree as one of the products
$\mu_1^{p^{j_1}}\ldots\mu_n^{p^{j_n}}$, and hence $|z^p|\not=|y|$. 
\end{proof}

\section{B\"okstedt spectral sequence argument}
\label{sec:bokstedtlemma}
To prove that there are no non-zero $d^2$ differential when computing $\Loday(T^n)$ we look at the B\"okstedt spectral sequence

\begin{lemma} \label{lemma:bokstedttorus}
Given $n>2$ and a prime $p>2$. Assume that $$
\Loday(T^\uord{n-1})\cong \bigotimes_{U\subseteq \uord{n-1} } B_U,$$ where $B_U$ is
described in Definition~\ref{def:labelB_U}. Then
\begin{enumerate}
 \item 
The B\"okstedt spectral sequence
calculating $H_*(\loday_{T^\uord{n}}\HF)$ has $E^2$-page 
$$\overline{E}^2(T^\uord{n})\cong A_*\otimes \Loday(T^{\bf n-1})\otimes
\bigotimes_{\emptyset\not=U\subseteq \uord{n-1}}B_{U\cup \{n\}}\otimes
E(\sBok_n\oxi_1,\sBok_n\oxi_2,\ldots)\otimes
\Gamma(\sBok_n\otau_0,\sBok_n\otau_2,\ldots),$$
\item \label{lemma_part:noDiff} There are no differentials $d^r$ when $r<p-1$,
so $\overline{E}^2(T^\uord{n})=
\overline{E}^{p-1}(T^\uord{n})$.
\item \label{lemma_part:bokDiff}
 There are differentials 
$$d^{p-1}(\gamma_{p^l}(\sBok_n\otau_i))=\sBok_n\oxi_{i+1}\cdot
\gamma_{p^l-l}(\sBok_n\otau_i).$$
\setcounter{enumi_saved}{\value{enumi}}
\end{enumerate}

If, in addition, given $m\geq 0$ the homomorphism $\attach^\uord{n}\colon
\Loday(S^{n-1})\rightarrow
\Loday(T^\uord{n}_{n-1})$ factors through $\bb{F}_p$ in degrees less than or
equal to $2pm-1$ and the spectral
sequence $E^*(T^\uord{n})$ collapses in total
degrees
less than or equal to $2pm-1$ (that is $E^2(T^\uord{n})=E^\infty(T^\uord{n})$ in
these degrees) then:

\begin{enumerate}
\setcounter{enumi}{\value{enumi_saved}}
\item \label{lemma_part:possDiff} The only
other possible non-zero differentials in
$\overline{E}^{p-1}(T^\uord{n})$ starting in total degrees less than or equal to
$2p(m+1)-1$, are 
$$d^{p-1}(\gamma_{p^l}(\sBok_n
x))=\gamma_{p^l-p}(\sBok_nx)\sum_ir_{x,i}d^{p-1}(\gamma_p(\sBok_n\otau_i)),$$
where $x$ is a generator in $\Loday(T^\uord{n-1})$ of odd
degree and $r_{x,i}\in \Loday(T^\uord{n-1})\subset
\overline{E}^{p-1}_{0,*}(T^\uord{n})$.
\item \label{lemma_part:abutment}
Let $B'_U\subsetneq \overline{E}^2(T^\uord{n})$ be the algebra, isomorphic to
$B_U$,
that has the same generators as $B_U$, except that we exchange the generators 
$\gamma_{p^l}(\sBok_nx)$ in degrees less than $2p(m+1)$ with the infinite
cycles
$$\gamma_{p^l}((\sBok_nx)')=\sum_{j=0}^{p^{l-1}}\big(
(-1)^j\gamma_{p^l-pj}((\sBok_nx)')
\sum_{\alpha\in \bb{N}^\bb{N}, |\alpha|=j}\prod_{i\in
\bb{N}}r_i^{\alpha_i}\gamma_{p\alpha_i}(\sBok_n\otau_i)\big),$$
where $|\alpha|=\sum_{i\in \bb{N}}\alpha_i$, and the convention is that $0^0=1$,
$\gamma_0(x)=1$, and $\gamma_{i}(x)=0$ when $i<0$.

When $s+t\leq 2p(m+1)-2$ we get an isomorphism
$$\overline{E}^\infty_{s,t}(T^\uord{n}) \cong A_*\otimes \Loday(T^{\bf
n-1})\otimes
\bigotimes_{\emptyset\not=U\subseteq \uord{n-1}}B'_{U\cup \{n\}}\otimes
P_p(\sBok_n\otau_0,\sBok_n\otau_1,\ldots).
$$
\end{enumerate}
\end{lemma}

\begin{proof}
By Proposition 2.1 in \cite{McClureStaffeldt93} and the K\"unneth isomorphism there are
isomorphisms of $H_*(\loday_{T^\uord{n-1}}\HF)\cong A_*\otimes
\Loday(T^\uord{n-1})$-Hopf algebras
\begin{align*}\overline{E}^2(T^\uord{n})&=
HH_*(H_*(\loday_{T^\uord{n-1}}\HF))\cong 
  H_*(\loday_{T^\uord{n-1}}\HF)\otimes
\tor^{A_*\otimes \Loday(T^\uord{n-1})}(\bb{F}_p,\bb{F}_p)\\ 
&\cong
A_*\otimes \Loday(T^\uord{n-1})\otimes \bigotimes_{U\subseteq
\uord{n-1}}\tor^{B_U}(\bb{F}_p,\bb{F}_p)\otimes
\tor^{A_*}(\bb{F}_p,\bb{F}_p) \\
&\cong  
A_*\otimes \Loday(T^{\bf n-1})\otimes
\bigotimes_{\emptyset\not=U\subseteq \uord{n-1}}B_{U\cup \{n\}}\otimes
E(\sBok_n\oxi_1,\sBok_n\oxi_2,\ldots)\otimes
\Gamma(\sBok_n\otau_0,\sBok_n\otau_2,\ldots),
\end{align*}
where the empty set is left out in the tensor product in the last line, since
$\tor^{B_\emptyset}(\bb{F}_p,\bb{F}_p)$ is isomorphic to $\bb{F}_p$. See \cite{AngeltveitRognes05} for more details on the Hopf algebra structure.

\emph{Proof of~\ref{lemma_part:noDiff}:}
 The B\"okstedt spectral sequence $\overline{E}^2(T^\uord{n})$ is an
$A_*\otimes \Loday(T^\uord{n-1})$-Hopf-algebra spectral sequence.
 By Proposition~\ref{prop:shortestDiff}, the shortest
differential is therefore from an
indecomposable element to a primitive element. By the graded version of Proposition
3.12 in \cite{MilnorMoore65} the primitive elements are linear
combinations of the monic words in $\bigotimes_{\emptyset\not=U\subseteq
\uord{n-1}}B_{U\cup
\{n\}}$, and the elements $\sBok_n\oxi_{i+1}$ and $\gamma_1(\sBok_n\otau_i)$ for
$i\geq 0$. The primitive elements are thus in
filtration $1$ and
$2$. The indecomposable elements are linear combinations of the
$\bb{F}_p$-algebra
generators in $\bigotimes_{\emptyset\not=U\subseteq \uord{n-1}}B_{U\cup
\{n\}}\otimes
E(\sBok_n\oxi_1,\sBok_n\oxi_2,\ldots)\otimes
\Gamma(\sBok_n\otau_0,\sBok_n\otau_1,\ldots)$, given by the  admissible words
in $\bigotimes_{\emptyset\not=U\subseteq \uord{n-1}}B_{U\cup
\{n\}}$ together with the elements $\sBok_n\oxi_j$ and
$\gamma_{p^k}(\sBok_n\otau_j)$, and they are in
filtration $1,2$ and $\pi$ for $i>0$. The indecomposable elements in
filtration $p$ are generated by 
 $\gamma_p(\sBok_nx)$ for a generator $x$ in $A_*\otimes \Loday(T^{\bf n-1})$ of
odd
degree. By Theorem 1 in \cite{Hunter96}, these elements survive to
$\overline{E}^{p-1}(T^\uord{n})$, so 
$\overline{E}^2(T^\uord{n})=\overline{E}^{p-1}(T^\uord{n})$.

\emph{Proof of~\ref{lemma_part:bokDiff}:}
 Theorem 1 in \cite{Hunter96} also gives us the differentials 
\begin{equation}\label{eq:bokstedDiffInProof}
d^{p-1}(\gamma_{p+k}(\sBok_n\otau_i))=u_i\sBok_n\oxi_{i+1}\cdot
\gamma_k(\sBok_n\otau_i),
\end{equation}
where $u_i$ are units in $\bb{F}_p$.

\emph{Proof of~\ref{lemma_part:possDiff}:}
When $m=0$, there is nothing to prove, since all elements in filtration
$p$ and higher are in degrees at least $2p$. 
Since $\loday_{T^\uord{n}}\HF$ is an $\HF$-module it is a generalized Eilenberg
Mac~Lane spectrum, so the
Hurewicz homomorphism induces an isomorphism between the 
$\bb{F}_p$-modules  $A_*\otimes
\Loday(T^\uord{n})$ and $H_*(\loday_{T^\uord{n}}\HF)$.

 From the assumption that $\attach^\uord{n}$ factors through $\bb{F}_p$ in
degrees less than or equal to $2pm-1$ and that  
$E^2(T^\uord{n})_{<2pm}\cong E^\infty(T^\uord{n})_{<2pm}$, we know the
dimension of $H_*(\loday_{T^\uord{n}}\HF)$ as an $\bb{F}_p$-module in
degrees
less than $2pm$. We will
show that if there are
other differentials in the spectral sequence $\overline{E}^2(T^\uord{n})$ 
starting in degrees less than or equal to $2p(m+1)-1$, than those in part~\ref{lemma_part:bokDiff} and~\ref{lemma_part:possDiff} of the
lemma, the dimension of $\overline{E}^\infty(T^\uord{n})$ is smaller than the
abutment of the spectral
sequence, which is equal to $H_*(\loday_{T^\uord{n}}\HF)$, thus giving us a
contradiction.

Assume the only $d^{p-1}$-differentials in the B\"okstedt spectral sequence
$\overline{E}^2(T^\uord{n})$ are those generated by 
\ref{eq:bokstedDiffInProof}. 
Lemma~\ref{lemma:pTermSS} yields an isomorphism 
$$\overline{E}^p(T^\uord{n})\cong A_*\otimes \Loday(T^{\bf n-1})\otimes
\bigotimes_{\emptyset\not=U\subseteq \uord{n-1}}B_{U\cup \{n\}}\otimes
P_p(\sBok_n\otau_0,\sBok_n\otau_1,\ldots).$$
Proposition~\ref{prop:B_n} together with the assumption that
$\attach^\uord{n}$ factors through $\bb{F}_p$ in
degrees less than $2pm-1$ and that $E^2(T^\uord{n})_{<
2pm}=E^\infty(T^\uord{n})_{< 2pm}$,
 gives us an $\bb{F}_p$-module
isomorphism
\begin{align*}
E^\infty(T^\uord{n})_{< 2pm} &= 
E^2(T^\uord{n})_{<2pm} =(\Loday(T^\uord{n}_{n-1})\otimes B_\uord{n})_{<2pm} \\
 &\cong 
\Big(\bigotimes_{U\subseteq \uord{n}} B_U\Big)_{<2pm}\cong
\Big(\bigotimes_{U\subseteq \uord{n-1}} B_U\otimes
\bigotimes_{\emptyset\not=U\subseteq\uord{n-1}} B_{U\cup \{n\}}
\otimes B_{\{n\}}\Big)_{<2pm} \\
&\cong\Big( \Loday(T^\uord{n-1})\otimes 
\bigotimes_{\emptyset\not=U\subseteq\uord{n-1}} B_{U\cup \{n\}}
\otimes B_{\{n\}}\Big)_{<2pm}. 
\end{align*}

By Proposition \ref{prop:B_n}, there is an $\bb{F}_p$-module isomorphism from
$P_p(\sBok_n\otau_0,\sBok_n\otau_1,\ldots)$
to  $B_{\{n\}}$ given by mapping $\sBok_n\otau_i$ to $\mu_n^{p^i}$, and this
isomorphism yields an 
 $\bb{F}_p$-module isomorphism 
$\overline{E}^p(T^\uord{n})_{< 2pm}\cong(A_*\otimes
\Loday(T^\uord{n}))_{<2pm}\cong H_*(\loday_{T^\uord{n}}\HF)_{<2pm}$.

Assume there is a $d^{p-1}$-differential with image in
$\overline{E}^{p-1}(T^\uord{n})_{<
2pm}$, which doesn't have image in the ideal 
$(\sBok_n\oxi_1,\sBok_n\oxi_2,\ldots)\subseteq
\overline{E}^{p-1}(T^\uord{n})$,
which is the ideal generated by the images of all the differentials in
equation~\ref{eq:bokstedDiffInProof}. 
Then, in the degree
of the target of this differential, the
dimension of the $\bb{F}_p$-module
$\overline{E}^\infty(T^\uord{n})_{<2pm}$ would be smaller than the dimension of 
$H_*(\loday_{T^\uord{n}}\HF)_{< 2pm}\cong \overline{E}^p(T^\uord{n})_{< 2pm} $,
 giving us a contradiction. 

To find all possible $d^{p-1}$-differentials with target in
$(\sBok_n\oxi_1,\sBok_n\oxi_2,\ldots)$ it suffices to look at differentials
from indecomposable elements. 
 Possible non-zero $d^{p-1}$-differentials with image in
$\overline{E}^{p-1}_{< 2pm}$ are thus generated by 
$d^{p-1}(\gamma_{p^k}(\sBar^0_nx))$ and $d^{p-1}(\gamma_{p^k}(\varphi_nx))$ where
$x$ is an $U$-admissible word in $B_U\subseteq \Loday(T^{\bf n-1})$ for some
$\emptyset \not=U\subset \uord{n-1}$ of odd degree at most $2mp^{1-k} -1$ and
even degree at most 
$\frac{2mp^{1-k}-2}p$, respectively, and $k\geq 1$.  
From the calculation
$$\psi(d^{p-1}(\gamma_{p^k}(\varphi_nx)) )= d^{p-1}(\psi(\gamma_{p^k}(\varphi_nx))) =
d^{p-1}(\sum_{i+j=p^k}\gamma_i(\varphi_nx)\otimes \gamma_j(\varphi_nx)),$$
we see by induction on $k$, that $d^{p-1}(\gamma_{p^k}(\varphi_nx))$ must be
primitive.
Thus it is zero, since when $k\geq 1$, it is in filtration greater than or equal
to $p+1$, while
the primitive elements are in filtration $1$ and $2$.

For the elements $\gamma_{p^k}(\sBar^0_nx)$, Theorem 1 in \cite{Hunter96} yields the
formula 
$$d^{p-1}(\gamma_{p+k}(\sBar^0_nx)) = (\sBok\beta Q^{\frac{|x|+1}2}x)\cdot
\gamma_k(\sBar^0_nx),$$
so $\gamma_{p+k}(\sBar^0_nx)$ is a cycle if and only if $\gamma_{p}(\sBar^0_nx)$
is a cycle. 

In $\overline{E}^{p-1}_{1,*}(T^\uord{n})$, the ideal generated by the elements 
$\sBok_n\oxi_1,\sBok_n\oxi_2,\ldots$ is equal to $A_*\otimes
\Loday(T^\uord{n-1})\{\sBok_n\oxi_1,\sBok_n\oxi_2,\ldots\}$. 
Thus, if $d^{p-1}(\gamma_{p}(\sBar^0_nx))$ is non-zero, $\sBok_n\beta
Q^{\frac{|x|+1}2}x$ must be an element in $A_*\otimes
\Loday(T^\uord{n-1})\{\sBar_n\oxi_1,\sBar_n\oxi_2,\ldots\}$.
Since differentials from a $A_*$-comodule primitive has target an $A_*$-comodule
primitive, $\sBok_n\beta
Q^{\frac{|x|+1}2}x$ must actually be an element in
$\Loday(T^\uord{n-1})\{\sBok_n\oxi_1,\sBok_n\oxi_2,\ldots\}$. Hence, 
$$\sBok_n\beta Q^{\frac{|x|+1}2}x =
\sum_ir_{x,i}d^{p-1}(\gamma_p(\sBok_n\otau_i)),$$
where $r_{x,i}$ are elements in $\Loday(T^\uord{n-1})$. 
  
\emph{Proof of part~\ref{lemma_part:abutment}:}
By Lemma~\ref{lemma:changeBasisInSS}, the elements 
$\gamma_{p^l}((\sBok_nx)')$ in part
\ref{lemma_part:abutment} are cycles, and $\overline{E}^{p-1}$ is isomorphic as
an algebra to
$$\overline{E}^{p-1}(T^\uord{n})\cong A_*\otimes \Loday(T^{\bf n-1})\otimes
\bigotimes_{\emptyset\not=U\subseteq \uord{n-1}}B'_U\otimes
E(\sBok_n\oxi_1,\sBok_n\oxi_2,\ldots)\otimes
\Gamma(\sBok_n\otau_0,\sBok_n\otau_2,\ldots).$$

In total degrees less
than or equal to $2p(m+1)-1$, all elements in
$\bigotimes_{\emptyset\not=U\subseteq \uord{n-1}}B'_U$ are cycles. 
Thus,  when $s+t\leq 2p(m+1)-2$, the only differentials are those in part
\ref{lemma_part:bokDiff}, so by Lemma~\ref{lemma:pTermSS} there is
an isomorphism
$$\overline{E}^p(T^\uord{n})\cong A_*\otimes \Loday(T^{\bf n-1})\otimes
\bigotimes_{\emptyset\not=U\subseteq \uord{n-1}}B'_U\otimes
P_p(\sBok_n\otau_0,\sBok_n\otau_1,\ldots),$$
in total degrees less than $2p(m+1)-2$. 

All the algebra generators in filtration greater than $2$ are in total degrees
zero modulo $2p$. All generators in total degrees less than or equal to $2pm$
must be
cycles, because  otherwise, in the degrees of the target of this non-zero
differential, the dimension of the $\bb{F}_p$-module
$\overline{E}^\infty(T^\uord{n})_{<2pm}$ will be smaller than the dimension of
$H_*(\loday_{T^\uord{n}}\HF)_{< 2pm}\cong \overline{E}^p(T^\uord{n})_{< 2pm} $.
Thus there are no more differentials with source in total degrees
less than or equal to $2p(m+1)$, so $\overline{E}^p(T^\uord{n})_{\leq
2p(m+1)-2}\cong
\overline{E}^\infty(T^\uord{n})_{\leq 2p(m+1)-2}$.
\end{proof}

The final two lemmas are one standard homological calculation, and one easy homological calculation that was used in the previous Lemma.

\begin{lemma} \label{lemma:pTermSS}
Let $R$ be a field of characteristic $p$, and let $E^*$ be a connected $R$-algebra spectral sequence with 
$$E^{p-1}\cong A\otimes_R \Gamma_R(x_0,x_1,\ldots)\otimes
\ext_R(y_1,y_2,\ldots),$$
 where $x_i$ and $y_i$ are in filtration
$1$.
Assume there are non-zero
differentials 
$$d^{p-1}(\gamma_{p+k}(x_i))=\gamma_k(x_i)y_{i+1},$$
for all $i,k\geq 0$. Then
$$E^p\cong A\otimes  P_R(x_0,x_1,\ldots)/(x_0^p,x_1^p,\ldots).$$
\end{lemma}
\begin{proof}
Consider the $R$-algebra 
$\Gamma_R(x_i)\otimes E_R(y_{i+1})$ with differentials given by the equations 
$d^{p-1}(\gamma_{p+k}(x_i))=\gamma_k(x_i)y_{i+1}$. The cycles are
$\gamma_k(x_i)$ for $k\leq p-1$ and $\gamma_k(x_i)y_{i+1}$ for all $k$, but
this last family are also boundaries, so the homology is $P_R(x_i)/(x_i^p)$. 
 Since $R$ is a field, the lemma now follows from the K\"unneth isomorphism, 
\end{proof}

\begin{lemma} \label{lemma:changeBasisInSS}
Let $R$ be a field of characteristic $p$, and and let $E^*$ be a connected $R$-algebra spectral sequence with 
$$E^{p-1}\cong A\otimes \Gamma_R(x_0,x_1,\ldots)\otimes
\ext_R(y_1,y_2,\ldots)\otimes
\Gamma_R(z).$$
Assume there are differentials 
\begin{align*}
d^{p-1}(\gamma_{p+k}(x_i))&=\gamma_k(x_i)y_{i+1} \\
d^{p-1}(\gamma_{p+k}(z))&=\gamma_k(z)\cdot\sum_{l\in
\nat}r_ly_{l+1},
\end{align*}
where $r_l$ are elements in $R$.

For $k>\geq 0$ define $\gamma_{p^k}(z')$ by the formula
\begin{equation} \label{eq:changeCycles}
\gamma_{p^k}(z')=\sum_{j=0}^{p^{k-1}}\big(
(-1)^j\gamma_{p^k-pj}(z)
\sum_{\alpha\in \nat^\nat, |\alpha|=j}\prod_{i\in
\nat}r_i^{\alpha_i}\gamma_{p\alpha_i}(x_i)\big),\end{equation}
where $|\alpha|=\sum_{k\in \nat}\alpha_i$, and the convention is that $0^0=1$,
$\gamma_0(x)=1$, and $\gamma_{j}(x)=0$ when $j<0$.

There is an $R$-algebra isomorphism
$$A\otimes \Gamma_R(x_0,x_1\ldots)\otimes \ext_R(y_1,y_2,\ldots)\otimes
\Gamma_R(z')\cong 
A\otimes \Gamma_R(x_0,x_1\ldots)\otimes \ext_R(y_1,y_2,\ldots)\otimes
\Gamma_R(z),$$
induced by the equations in \ref{eq:changeCycles}. 
Furthermore, the elements $\gamma_{p^k}(z')$ are $d^{p-1}$ cycles in $E^{p-1}$.
\end{lemma}
\begin{proof}
First we show that the elements $\gamma_{p^k}(z')$ are cycles. Applying the Leibniz rule several times gives
\begin{multline} \label{eq:HopSSdiff}
d^{p-1}(\gamma_{p^k}(z'))= \sum_{j=0}^{p^{k-1}}\Big(
(-1)^j\gamma_{p^k-p(j+1)}(z)\big(\sum_{l\in
\nat}r_ly_{l+1}\big)
\sum_{\alpha\in \nat^\nat, |\alpha|=j}\prod_{i\in
\nat}r_i^{\alpha_i}\gamma_{p\alpha_i}(x_i)\Big) \\
   + \sum_{j=0}^{p^{k-1}}\big( (-1)^j\gamma_{p^k-pj}(z)
\sum_{\alpha\in \nat^\nat, |\alpha|=j}\sum_{l\in
\nat}r_l^{\alpha_l}\gamma_{p(\alpha_l-1)}(x_l)y_{l+1}\prod_{
l\not=i\in \nat}r_i^{\alpha_i}\gamma_{p\alpha_i}(x_i)\big),
\end{multline}
and there are no extra signs, since all the factors in the
expression of $\gamma_{p^k}(z)$ are in even
degrees.

In the first sum in equation \ref{eq:HopSSdiff} observe that
\begin{align*}
\big(\sum_{l\in \nat}r_ld^{p-1}(\gamma_p(x_l))\big)
&\sum_{\alpha\in \nat^\nat, |\alpha|=j}\prod_{i\in
\nat}r_i^{\alpha_i}\gamma_{p\alpha_i}(x_i)  \\
&=
\sum_{l\in \nat}
\sum_{\alpha\in \nat^\nat,
|\alpha|=j+1}r_l^{\alpha_l}y_{l+1}\gamma_{p(\alpha_l-1)}
(x_l)\prod_{l\not=i\in \nat}r_i^{\alpha_i}\gamma_{p\alpha_i}(x_i)
\end{align*}

Substituting this expression into equation \ref{eq:HopSSdiff} and 
increasing the summation index in the first sum 
with one, the differential is given by
\begin{multline*}
d^{p-1}(\gamma_{p^k}(z'))= \\  \sum_{j=1}^{p^{k-1}+1}\big(
(-1)^{j-1}\gamma_{p^k-pj}(z)
\sum_{\alpha\in \nat^\nat, |\alpha|=j}\sum_{l\in
\nat}r_l^{\alpha_l}\gamma_{p(\alpha_l-1)}(x_l)y_{l+1}\prod_{
l\not=i\in \nat}r_i^{\alpha_i}\gamma_{p\alpha_i}(x_i)\big)
 \\
  + \sum_{j=0}^{p^{k-1}}\big( (-1)^j\gamma_{p^k-pj}(z)
\sum_{\alpha\in \nat^\nat, |\alpha|=j}\sum_{l\in
\nat}r_l^{\alpha_l}\gamma_{p(\alpha_l-1)}(x_l)y_{l+1}\prod_{
l\not=i\in \nat}r_i^{\alpha_i}\gamma_{p\alpha_i}(x_i)\big).
\end{multline*}
The $j=p^{k-1}+1$ summand in the first sum is zero because
$\gamma_{p^k-(p^{k-1}+1)p}(z)=\gamma_{-p}(z)=0$. 
Similarly, the $j=0$ summand in the last sum is zero because $0=j=|\alpha|$
implies that $\alpha_l=0$ for all $l$, and hence
$\gamma_{p(\alpha_l-1)}(x_l)=\gamma_{-p}(x_l)=0$.

The rest of the summands cancel pairwise, due to the factors $(-1)^{j-1}$ and
$(-1)^j$. 
Thus $d^{p-1}(\gamma_{p^k}(z'))=0$.

That $(\gamma_{p^k}(z'))^p=0$ is clear by the Frobenius formula, since every
summand in the expression for $\gamma_{p^k}(z')$ contains a factor in a
divided power algebra. 

The composite 
$$\xymatrix@C=6pc{
\Gamma_R(z)\ar[r]^-{\gamma_{p^k}(z)\mapsto \gamma_{p^k}(z')} &
\Gamma_R(x_0,x_1,\ldots)\otimes \ext_R(y_1,y_2,\ldots)\otimes \Gamma_R(z)
\ar[r]^-{\pr_{\Gamma_R(z)}} &\Gamma_R(z)}$$
equals the identity. 
Hence, the map induced by equation \ref{eq:changeCycles} induces an $R$-algebra
isomorphism 
$$A\otimes \Gamma_R(x_0,x_1\ldots)\otimes \ext_R(y_1,y_2,\ldots)\otimes
\Gamma_R(z')\cong A\otimes 
\Gamma_R(x_0,x_1\ldots)\otimes \ext_R(y_1,y_2,\ldots)\otimes \Gamma_R(z).$$
\end{proof}

\bibliographystyle{alphaInit}

\bibliography{bibliografi}

\end{document}